\newtheorem{Lemma}{Lemma}
\newtheorem{Theorem}[Lemma]{Theorem}
\newtheorem{Corollary}[Lemma]{Corollary}
\newtheorem{Proposition}[Lemma]{Proposition}
\newtheorem{Remark}[Lemma]{Remark}
\newcommand{\R}{\mathbb{R}}
\newcommand{\N}{\mathbb{N}}
\renewcommand{\P}{\mathbb{P}}
\newcommand{\E}{\mathbb{E}}
\renewcommand{\1}{\mathbbm{1}}
\DeclareMathOperator*{\argmax}{arg\,max}
\renewcommand{\d}{\mathrm{d}}
\newcommand{\ind}{\mathbf{1}}
\newcommand{\supp}{\mathop{\mathrm{supp}}}
\newcommand{\MSE}{\mathop{\mathrm{MSE}}}
\newcommand{\MISE}{\mathop{\mathrm{MISE}}}
\newcommand{\MRV}{\mathop{\mathrm{MRV}}}
\numberwithin{equation}{section}
\begin{document}

\title{Conditionally Max-stable Random Fields\\ based on log Gaussian Cox Processes}

\author{M.~Dirrler\footnote{Universit\"at Mannheim, A5,6 68161 Mannheim, Germany, Email address: mdirrler@mail.uni-mannheim.de},\, 
M.~Schlather\footnote{Universit\"at Mannheim, A5,6 68161 Mannheim, Germany, Email address: schlather@math.uni-mannheim.de},\, 
and K.~Strokorb\footnote{Universit\"at Mannheim, A5,6 68161 Mannheim, Germany, Email address: strokorb@math.uni-mannheim.de}}

\maketitle
\thispagestyle{empty}

\begin{abstract}
	{We introduce a class of spatial stochastic processes in the max-domain of attraction of familiar max-stable processes. The new class is based on Cox processes and comprises models with short range dependence. We show that statistical inference is possible within the given framework, at least under some reasonable restrictions.}
\end{abstract}

{\small
  \noindent \textit{Keywords}: {conditionally max-stable process; Cox extremal process; log Gaussian Cox process; mixed moving maxima; non-parametric intensity estimation
  }\\
  \noindent \textit{2010 MSC}: {Primary 60G70, 60G55} \\
  \phantom{\textit{2010 MSC}:} {Secondary 60G60} }


\section{Introduction}\label{sec:intro}

Probabilistic modelling of spatial extremal events is often based on the assumption that daily observations lie in the max-domain of attraction of a max-stable random field which justifies statistical inference by means of block maxima procedures. This methodology is applied in various branches of environmental sciences, for instance, heavy precipitation \citep{cooley2005}, extreme wind speads \citep{engelke2015huessler,genton2015maxstable,oesting2015postprocessing} and forest fire danger \citep{stephenson2015fire}.

At the same time modelling extreme observations on a smaller time scale is a much more intricate issue and to date only few non-trivial processes are known to lie in the max-domain of attraction (MDA) of familiar max-stable processes. Among them $\alpha$-stable processes \citep{samorodnitsky94} form a natural class which may be rich enough to cover a wide range of environmental sample path behaviour  \citep{stoev05alphastable} and scale mixtures of Gaussian processes with regularly varying scale, are known to lie in the domain of attraction of extremal t-processes \citep{opitz2013}.
However, stable processes are themselves complicated objects whose statistical inference is a challenging research topic \citep{nolan16book} and scale mixtures of Gaussian processes have an unnatural degree of long-range dependence. 

Our objective in this article is to introduce another class of spatial processes in the MDA of familiar max-stable models, which encompasses processes with short-range dependence and to explore whether statistical inference on them is feasible, at least under some reasonable restrictions. 

It is well-understood that max-stable processes can be built from Poisson point processes \citep{dehaan1984,ginehahnvatan1990maxstable,stoevtaqqu2006fractbr}. 
In order to define our new class of models,
we modify the underlying Poisson point process such that its intensity function is no longer fixed, but may depend on some spatial random effects. We pursue this idea by introducing conditionally max-stable processes based on Cox processes \citep{Cox55} which naturally generalize the class of mixed moving maxima processes \citep{smith1990maxstable,schlather2002modelsmaxstable,zhang04,stoev08}. Section~\ref{sec:model} contains the definition of our proposed model. A functional convergence theorem shows that these processes lie in the MDA of familiar mixed moving maxima processes. From a practical point of view, we choose to model the spatial random effects that influence the intensity function by a log Gaussian random field which makes the theory and application of log Gaussian Cox processes conveniently available for our setting, cf.\ \cite{moeller1998lgcp, moeller2004ppbook, moeller2010thinningpp, diggle2013loggaussiancox}.

For statistical inference we take a closer look at two important aspects in the recovery of the underlying Gaussian process. In Section~\ref{sec:estpsinonpara}, we deal with non-parametric estimation of realizations of the random intensity of the Cox process from observations of the conditionally max-stable processes and their storm centres. Based on the outcome of this procedure, we consider parametric estimation of the covariance function of the Gaussian process in Section~\ref{sec:estPsipara}. The performance of these procedures is examined in Section \ref{sec:simulationstudy} in a simulation study. To this end, we discuss in Section~\ref{sec:simu} how exact simulation of our proposed model can be traced back to exact simulation of max-stable random fields as in \cite{schlather2002modelsmaxstable}. Tables, proofs and auxiliary results are postponed to the Appendix~\ref{sec:appendix}.

%
%
%



\section{Model specification}\label{sec:model}
In this section we define our proposed model and study its properties. Since random measures and point processes are the building blocks for max-stable processes as well as our conditional model, we briefly review conventions based on \cite{daley2003pp, daley2008pp} in Appendix~\ref{sec:pointprocesses}.\\

\textbf{Reminder on Cox processes.}	The following point processes are relevant for our setting. A \textit{Poisson (point) process} $N\sim PP(\lambda)$ with directing measure $\lambda$  is a point process which satisfies that for any Borel set $B$ with $\lambda(B)<\infty$ the number of points $N(B)$ in the region $B$ is Poisson distributed with parameter $\lambda(B)$, i.e. $N(B)\sim poi(\lambda(B))$ and, secondly, that the random variables $N(B_1),\dots,N(B_n)$ are jointly independent for disjoint Borel sets $B_1,\dots,B_n$ in $E$.
We say $N$ is a \textit{Cox process} directed by the random measure $\Lambda$ and write $N \sim CP(\Lambda)$ if, conditional on $\Lambda$, it is a \textit{Poisson process}, i.e. $N|_{\Lambda=\lambda} \sim PP(\lambda)$.
By definition, \textit{Poisson processes} are special cases of \textit{Cox processes} with deterministic directing measure which coincides with their intensity measure, that is $\E N(B)=\lambda(B)$ for Borel sets $B$ and $N\sim PP(\lambda)$. For Cox processes, we have $\E N(B)=\E_{\Lambda}(\Lambda(B))$ instead.

The following lemma may be regarded as a central limit theorem for Cox processes and will be useful for our work. It means that the superposition of $n$ i.i.d.\ copies of an $1/n$-thinned Cox process converges weakly to a Poisson process and is an immediate consequence of Theorem~11.3.III in \cite{daley2008pp}.
\newpage
\begin{Lemma}
	\label{lemma:CP-CLT}
	Let $N^{(n)}_i \stackrel{i.i.d.}\sim CP(n^{-1}\Lambda)$, $i=1,\dots,n$ for $n=1,2,\dots$ be a triangular array of Cox processes. Then
	\begin{align*}
		\sum_{i=1}^n N^{(n)}_i \rightarrow PP(\lambda),\quad \text{where} \quad \lambda(A)=\E\Lambda(A),\quad \forall A \in \mathscr{B}(E).
	\end{align*}
\end{Lemma}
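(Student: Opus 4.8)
The plan is to read the statement as an instance of the null-array superposition theorem, Theorem~11.3.III in \cite{daley2008pp}: a uniformly asymptotically negligible triangular array of independent point processes converges weakly to $PP(\lambda)$ as soon as, for every bounded Borel set $A$ with $\lambda(\partial A)=0$, the superposed occupation probabilities converge, $\sum_{i=1}^n\P(N_i^{(n)}(A)\geq 1)\to\lambda(A)$, and the superposed multiple-point probabilities vanish, $\sum_{i=1}^n\P(N_i^{(n)}(A)\geq 2)\to 0$. Because the $N_i^{(n)}$ are i.i.d.\ in $i$ for fixed $n$, each sum collapses to $n$ times one term, so the whole proof reduces to estimating the void, one- and two-point probabilities of a single $1/n$-thinned Cox process.

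First I would make these probabilities explicit. Conditioning on $\Lambda$ and inserting the Poisson law with mean $\Lambda(A)/n$ gives $\P(N_1^{(n)}(A)=0)=\E[e^{-\Lambda(A)/n}]$ and $\P(N_1^{(n)}(A)\geq 2)=\E[g(\Lambda(A)/n)]$, where $g(m)=1-e^{-m}(1+m)$. The negligibility hypothesis is then immediate, since $\max_{i}\P(N_i^{(n)}(A)\geq 1)=1-\E[e^{-\Lambda(A)/n}]\to 0$ by dominated convergence.

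Next I would verify the two substantive conditions, in both cases dominating the integrand by $\Lambda(A)$, which is integrable precisely because $\lambda(A)=\E\Lambda(A)<\infty$ on bounded sets (the very condition that makes $PP(\lambda)$ well defined). For the occupation probabilities, $n\,\P(N_1^{(n)}(A)\geq 1)=\E[n(1-e^{-\Lambda(A)/n})]$, and from $0\leq n(1-e^{-\Lambda(A)/n})\leq\Lambda(A)$ with pointwise limit $\Lambda(A)$ dominated convergence yields $\E\Lambda(A)=\lambda(A)$. For the multiple-point probabilities, $g'(m)=me^{-m}\leq m$ gives both $g(m)\leq m$ and $g(m)\leq m^2/2$, so $n\,g(\Lambda(A)/n)\leq\Lambda(A)$ while tending to $0$ pointwise, whence $n\,\E[g(\Lambda(A)/n)]\to 0$.

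I would actually prefer a slightly cleaner route that sidesteps the theorem and argues directly with Laplace functionals. For $f\geq 0$ set $G=\int(1-e^{-f})\,\d\Lambda$; independence across $i$ factorizes the Laplace functional of the superposition into $(\E[e^{-G/n}])^n=(1-a_n)^n$ with $a_n=\E[1-e^{-G/n}]$. The same domination $n(1-e^{-G/n})\leq G$ forces $n a_n\to\E G=\int(1-e^{-f})\,\d\lambda$ by Tonelli, hence $(1-a_n)^n\to\exp(-\int(1-e^{-f})\,\d\lambda)$, which is exactly the Laplace functional of $PP(\lambda)$. In either route the only genuine obstacle is the integrability bookkeeping guaranteeing $\E\Lambda(A)<\infty$; the analytic substance is merely the elementary estimates $1-e^{-x}\leq x$ and $g(x)\leq x$.
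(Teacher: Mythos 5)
Your proof is correct, but it takes a genuinely different route from the paper's. The paper observes that $\sum_{i=1}^n N_i^{(n)}$ equals in distribution the $1/n$-thinning of $\sum_{i=1}^n N_i$ with $N_i\stackrel{i.i.d.}{\sim}CP(\Lambda_i)$, and then invokes the thinning theorem (Theorem~11.3.III of Daley and Vere-Jones): the thinned processes converge to a Cox process directed by $\xi$ if and only if the rescaled processes $n^{-1}\sum_{i=1}^n N_i$ converge to $\xi$; here the latter converge to the deterministic measure $\lambda$ by the law of large numbers, so the limit is $CP(\lambda)=PP(\lambda)$. You instead verify the hypotheses of the Grigelionis null-array superposition theorem by explicit estimates on the void, one-point and two-point probabilities of a single $1/n$-thinned Cox process, or---in your preferred variant---compute the Laplace functional of the superposition directly and pass to the limit by dominated convergence; both arguments are sound, modulo the standing assumption that $\lambda$ is boundedly finite, which you correctly identify as what makes $PP(\lambda)$ well defined. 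One citation slip: the null-array superposition result you attribute to Theorem~11.3.III is in fact Theorem~11.2.V in Daley and Vere-Jones; Theorem~11.3.III is the thinning theorem, which is what the paper uses, in an entirely different way. What your approach buys is self-containedness---particularly the Laplace-functional route, which needs only the equivalence of weak convergence with convergence of Laplace functionals plus the elementary bounds $1-e^{-x}\leq x$ and $g(x)\leq x$---whereas the paper's argument is shorter but leans on heavier machinery, namely the thinning theorem combined with a law of large numbers for random measures.
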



\textbf{Definition of Cox extremal processes.} Let $X$ be a (possibly deterministic) non-negative stochastic process on $\R^d$ that we call \emph{storm process} or \textit{shape}. We assume $X$ to have continuous sample paths. 
Based on its law $\P_X$ (on the complete separable metric space $\mathbb{X}=C(\R^d)$ with the usual Fr{\'e}chet metric) and another non-negative 
stochastic process $\Psi$ on $\R^d$, to be called \emph{spatial intensity process}, and a positive \emph{scaling constant} $\mu_Y$, we define a random field $Y$ on $\R^d$ by
\begin{align} \label{eq:defn-Y} 
	Y(t)= \bigvee_{i=1}^\infty u_i X_i(t-s_i), \qquad t \in \R^d,
\end{align}
where $N=\sum_{i=1}^\infty \delta_{(s_i,u_i,X_i)}$ is a Cox-process on $\mathbb{S}=\R^d\times (0,\infty]\times \mathbb{X}$, directed by the random measure 
\begin{align} \label{eq:Lambda}
	\mathrm{d}\Lambda(s,u,X)=\mu_Y^{-1} \, \Psi(s)\mathrm{d}s \, u^{-2} \mathrm{d}u \, \mathrm{d}\P_X. 
\end{align}
The randomness of the measure $\Lambda$ is due to the randomness of the spatial intensity process $\Psi$. 
Similarly to the situation with mixed moving maxima processes \citep{smith1990maxstable,zhang04,stoev08} or, more generally, extremal shot noise \citep{serra1984shotnoise1,serra1988shotnoise2,heinrich1994shotnoise,dombry2012shotnoise}, we will think of the processes $X_i$ as being random storms centred around $s_i$ that will affect its surroundings with severity $u_i$.  In case, the intensity process is almost surely identically one ($\Psi \equiv 1$), the construction of $Y$ is indeed the usual mixed moving maxima process
\begin{align} 
	\label{eq:defn-Z}
	Z(t)=\bigvee_{i=1}^\infty u_i X_i(t-s_i), \qquad t \in \R^d,
\end{align}
where $\sum_{i=1}^\infty \delta_{(s_i,u_i,X^{(i)})}$ is the Poisson process on $S$ with directing measure
\begin{align*}
	\d\lambda(s,u,X)= {\mu_Z}^{-1} \, \d s \, u^{-2} \d u \, \d \P_X.
\end{align*}
Note that, conditional on its intensity process $\Psi$,	the extremal process $Y$ is a (non-stationary) max-stable mixed moving maxima process.
In the sequel, we call $Y$ a \emph{conditionally max-stable random field} or \textit{Cox extremal process}.
\subsection{Properties of Cox extremal processes}\label{sec:Props} 
$\phantom{a}$\\
\textbf{Continuity, Stationarity and Max-Domain of Attraction.}
Even though the Cox extremal process $Y$ in \eqref{eq:defn-Y} itself is not max-stable, we show in this section that it lies in the max-domain of attraction of an associated mixed moving maxima random field $Z$ under rather general conditions. 
To show this, we first clarify some technical requirements that guarantee the finiteness and the continuity of sample paths of $Y$ and $Z$.


\begin{Lemma}[Finiteness and Sample-Continuity]\label{lemma:sample-cts}		
	Let $K$ be a (not necessarily compact) subset of $\R^d$.
	\begin{enumerate}
		\item If the integrability condition
		\begin{align} 
			\label{eq:int-cond-a}
			\E_\Psi \left(\E_X \left( \int_{\R^d} \sup_{t \in K} X(t-s) \Psi(s)~\d s\right)\right) <\infty 
		\end{align}	
		holds, then $\sup_{t \in K}Y(t)$ is almost surely finite. 
		\item If, additionally,  
		\begin{align}
			\label{eq:cond-sample-cts}
			\exists\, n \in \N \,:\, \inf_{t \in K} \bigvee_{i=1}^n u_i X_i(t-s_i) > 0 \quad \text{almost surely},
		\end{align}
		then the sample paths of the process $Y$ are almost surely continuous on $K$.	
		\item If both \eqref{eq:int-cond-a} and \eqref{eq:cond-sample-cts} are satisfied for any compact $K \subset \R^d$, 
		then  $Y$ is almost surely finite on compact sets and sample-continuous on $\R^d$. In this case only finitely many points of $N$ contribute to $Y$ on $K$. 
	\end{enumerate}
\end{Lemma}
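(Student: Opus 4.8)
\emph{Proof strategy.} The plan is to reduce all three parts to a single counting estimate: for a threshold $m>0$, control how many points of the Cox process $N$ can raise $\sup_{t\in K}Y(t)$ above $m$. The starting observation is that suprema commute, so
$\sup_{t\in K}Y(t)=\sup_i u_i\,\sup_{t\in K}X_i(t-s_i)$, and hence $\{\sup_{t\in K}Y(t)>m\}$ is exactly the event that $N$ has at least one atom $(s,u,X)$ with $u\,\sup_{t\in K}X(t-s)>m$. Conditional on $\Psi$, $N$ is a Poisson process with intensity $\mu_Y^{-1}\Psi(s)\,\d s\,u^{-2}\,\d u\,\d\P_X$ by the definition of a Cox process, so the conditional expected number of such atoms is the integral of this intensity over the set $\{u\,\sup_{t\in K}X(t-s)>m\}$. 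Writing $a=\sup_{t\in K}X(t-s)$, the inner $u$-integral collapses via $\int_{m/a}^{\infty}u^{-2}\,\d u=a/m$, leaving the conditional mean $(\mu_Y m)^{-1}\,\E_X\!\int_{\R^d}\sup_{t\in K}X(t-s)\,\Psi(s)\,\d s$. Taking expectation over $\Psi$ and bounding $\P(\text{count}\ge 1\mid\Psi)\le\E[\text{count}\mid\Psi]$, I obtain $\P(\sup_{t\in K}Y(t)>m)\le (\mu_Y m)^{-1}C$, where $C$ is precisely the finite quantity supplied by \eqref{eq:int-cond-a}. Sending $m\to\infty$ gives $\P(\sup_{t\in K}Y(t)=\infty)=0$, which is part~(a).

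For part~(b) I would first reuse this estimate at a \emph{fixed} level: for every rational $\epsilon>0$ the number $F_\epsilon$ of atoms with $\sup_{t\in K}u_iX_i(t-s_i)>\epsilon$ has finite expectation, hence is almost surely finite, and intersecting over all rational $\epsilon>0$ keeps this true simultaneously on one event of full probability. Condition \eqref{eq:cond-sample-cts} then supplies a (random) $\delta>0$ with $Y(t)\ge\delta$ for all $t\in K$ almost surely. Choosing a rational $\epsilon\in(0,\delta)$, any atom with $\sup_{t\in K}u_iX_i(t-s_i)\le\epsilon$ stays strictly below $\delta\le Y$ throughout $K$ and so never attains the maximum; therefore $Y|_K=\max_{i\in F_\epsilon}u_iX_i(\,\cdot\,-s_i)$ is a maximum of finitely many sample-continuous functions (recall $X\in C(\R^d)$ by assumption) and is thus continuous. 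This argument simultaneously yields part~(b) and the closing assertion of part~(c) that only finitely many points of $N$ contribute to $Y$ on $K$. Part~(c) then follows by applying (a) and (b) to every compact $K$ and invoking the locality of continuity: almost sure continuity on every compact set upgrades to almost sure continuity on all of $\R^d$.

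The step I expect to be the main obstacle is the use of the \emph{random} threshold $\delta$ in part~(b). Because $\delta$ depends on the full configuration (through the first $n$ atoms and through $\Psi$), one cannot directly claim that finitely many atoms exceed $\delta$. The two-stage device above is exactly what circumvents this: finiteness is established first at every fixed rational level via an \emph{unconditional} expectation bound (here Fubini, or the tower property, lets the nested $\E_\Psi(\E_X(\cdot))$ structure of \eqref{eq:int-cond-a} emerge naturally), and only afterwards is the random $\delta$ inserted by comparison with a rational $\epsilon$ lying below it. The remaining points are routine but worth verifying explicitly: that $\sup_i$ and $\sup_{t\in K}$ may be interchanged in the expression for $\sup_{t\in K}Y(t)$, and that the supremum over the finite index set $F_\epsilon$ is genuinely attained, so that the finite-maximum representation of $Y|_K$ is an exact identity rather than merely a bound.
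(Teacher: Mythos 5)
Your proposal is correct and follows essentially the same route as the paper's proof: a Poisson counting bound (conditional on $\Psi$, the number of atoms whose storm exceeds a fixed level $c$ on $K$ is Poisson with parameter $c^{-1}\mu_Y^{-1}\E_X\int_{\R^d}\sup_{t\in K}X(t-s)\Psi(s)\,\d s$, finite by \eqref{eq:int-cond-a}), followed by representing $Y|_K$ as a finite maximum of continuous functions once \eqref{eq:cond-sample-cts} supplies a positive lower bound. Your two-stage device — finiteness at every fixed rational level first, insertion of the random threshold $\delta$ afterwards — is in fact a more explicit rendering of the step the paper leaves implicit when it combines $I_c(K)$ with the random infimum.
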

\begin{Remark}
	The Cox extremal process $Y$ is in general not uniquely determined by the choice of its shape $X$ and intensity process $\Psi$. For instance, let $\tilde{X}$ be a process which satisfies the same assumptions as $X$, and independently of $\tilde{X}$, let $\xi$ be a random variable, such that $X$ can be decomposed into 
	\begin{align*}
		X(t)=\tilde{X}(t)\xi,\quad t\in\R^d.
	\end{align*}
	Then choosing $\tilde{X}$ as shape and $\Psi\cdot\xi$ as intensity process does not alter the finite dimensional marginal distributions of the Cox extremal process $Y$, since 
	\begin{align*}
		\P(Y(t_1)\leq y_1,\dots, Y(t_n)\leq y_n)=\E_{\Psi}\exp\left(-\E_X\int \max_{i=1,\dots,n} \frac{X(t_i-s)}{y_i} \Psi(s)~\d s\right).
	\end{align*}
\end{Remark}

In the sequel, we will always assume that the intensity process $\Psi$ has continuous sample paths, is strictly stationary and almost surely strictly positive with
\begin{align}
	\label{eq:int-cond-rho}
	c_\Psi = \E_\Psi\Psi(o) < \infty,
\end{align}
where $o\in\R^d$ denotes the origin.
These assumptions simplify some requirements of the preceding lemma.
For instance, by Tonelli's theorem, condition \eqref{eq:int-cond-a} will be equivalent to 
\begin{align}\label{eq:int-cond-X}
	\E_X \left( \int_{\R^d} \sup_{t \in K} X(t-s) \d s\right) <\infty.
\end{align}	
For $K=\{t\}$, we obtain that
\begin{align}
	\label{eq:int-cond-b}
	\E_\Psi \bigg( \E_X\int_{\R^d}X(t-s)\Psi(s)\d s \bigg) = c_\Psi \cdot \E_X \bigg(\int_{\R^d} X(s)~\d s \bigg)  <\infty
\end{align}	
entails the finiteness of $Y(t)$ as well as $Z(t)$ for $t \in \R^d$.

In fact, the mixed moving maxima field $Z$ in \eqref{eq:defn-Z} has standard Fr\'echet margins if its scaling constant ${\mu_Z}$ equals \eqref{eq:int-cond-b} with $\Psi\equiv 1$, that is $c_{\Psi}=1$.
Condition \eqref{eq:int-cond-a} will be trivially satisfied for compact subsets $K$ of $\R^d$ 
if $\Psi$ is stationary, $c_\Psi \in (0,\infty)$ and 
\begin{align}\label{cond:Xbounded}
	X \leq C \ind_{B_{R}(o)},\quad \P_X\text{-almost surely}
\end{align}
for some positive constants $C,R>0$.  Here, $B_R({{o}}) \subset \R^d$ denotes the closed ball of radius $R$ centred at $o\in\R^d$.
On the other hand, the following lemma gives a simpler requirement to ensure the somewhat cumbersome condition \eqref{eq:cond-sample-cts}. Note that $\inf_{s \in K} \Psi(s) >0$ is always satisfied for compact $K$ if we assume the (sample-continuous) intensity process $\Psi$ to be almost surely strictly positive. 

\begin{Lemma}\label{lemma:sample-cts2}
	If $K$ is compact, $\inf_{s \in K} \Psi(s) >0$ holds $\P_\Psi$-almost surely, and, additionally,
	\begin{align}
		\label{cond:Xsupport}
		\exists\, r>0 \text{ such that } \P_X\big(B_r(o)\subset \supp(X)\big)>0
	\end{align}
	with $\supp(X)=\{s \in \R^d \,:\, X(s)>0 \}$, then condition \eqref{eq:cond-sample-cts} holds true for $K$.
\end{Lemma}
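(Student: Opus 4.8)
The plan is to reduce condition \eqref{eq:cond-sample-cts} to a finite covering problem on $K$ and then to verify, using the Cox structure of $N$, that suitable ``covering storms'' exist almost surely. The geometric observation driving everything is that a storm whose shape is uniformly positive on the closed ball $B_r(o)$ forces a uniform positive contribution on a whole ball around its centre.

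First I would fix the covering. Since $K$ is compact I can choose finitely many centres $c_1,\dots,c_m\in K$ with $K\subset\bigcup_{j=1}^m B_{r/2}(c_j)$. The key elementary step is: if a point $(s_i,u_i,X_i)$ of $N$ satisfies $s_i\in B_{r/2}(c_j)$ and $B_r(o)\subset\supp(X_i)$, then for every $t\in B_{r/2}(c_j)$ the triangle inequality gives $|t-s_i|\le |t-c_j|+|c_j-s_i|\le r$, so $t-s_i\in B_r(o)$ and hence $X_i(t-s_i)\ge \delta_i:=\inf_{z\in B_r(o)}X_i(z)$. Because $X_i$ is continuous and strictly positive on the compact set $B_r(o)$, this infimum $\delta_i$ is strictly positive, so a single such storm already yields the uniform lower bound $u_i\delta_i>0$ on its contribution over $B_{r/2}(c_j)$. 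Consequently, if for \emph{each} $j$ there is at least one point $(s^{(j)},u^{(j)},X^{(j)})$ of $N$ with $s^{(j)}\in B_{r/2}(c_j)$ and $B_r(o)\subset\supp(X^{(j)})$, then setting $\varepsilon:=\min_{j=1,\dots,m} u^{(j)}\delta^{(j)}>0$ gives $\inf_{t\in K}\bigvee_{j=1}^m u^{(j)}X^{(j)}(t-s^{(j)})\ge\varepsilon>0$. As these are finitely many points of $N$, taking $n$ to be the largest of their indices in the enumeration of $N$ yields \eqref{eq:cond-sample-cts}, since enlarging the index range only increases the running maximum; this monotonicity also makes the (non-canonical) enumeration irrelevant.

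It therefore remains to establish the almost-sure existence of these covering storms, which I would do conditionally on $\Psi$. Conditional on $\Psi$, the process $N$ is Poisson with intensity \eqref{eq:Lambda}, so with $A_r:=\{X\in\X:B_r(o)\subset\supp(X)\}=\{X:\inf_{z\in B_r(o)}X(z)>0\}$ (a measurable, indeed open, subset of $\X$) the number of points in $B_{r/2}(c_j)\times(0,\infty]\times A_r$ is Poisson with mean
\begin{align*}
	\Lambda\big(B_{r/2}(c_j)\times(0,\infty]\times A_r\big)
	=\mu_Y^{-1}\Big(\int_{B_{r/2}(c_j)}\Psi(s)\,\d s\Big)\Big(\int_0^\infty u^{-2}\,\d u\Big)\,\P_X(A_r).
\end{align*}
Here $\P_X(A_r)>0$ by the support hypothesis \eqref{cond:Xsupport}; the $\Psi$-integral is strictly positive because $B_{r/2}(c_j)$ is contained in the compact $r$-neighbourhood of $K$, on which $\Psi$ is bounded away from zero by continuity together with the standing almost-sure strict positivity of $\Psi$; and, crucially, $\int_0^\infty u^{-2}\,\d u=\infty$. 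Hence the Poisson mean is infinite, so almost surely infinitely many (in particular at least one) such point exists. Intersecting over the finitely many indices $j$ keeps this an almost-sure event conditional on $\Psi$, and integrating over the law of $\Psi$ transfers it to an unconditional almost-sure statement, completing the argument.

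I expect the main subtlety to be precisely this last, probabilistic step rather than the geometry: the observation that the factor $u^{-2}\,\d u$ renders the relevant intensity mass infinite is what upgrades ``positive probability'' to ``almost-sure existence'' and, pleasantly, removes any need to bound $u$ away from $0$. The remaining ingredients --- measurability of $A_r$, positivity of $\delta_i$ via continuity on the compact ball, and the reduction to finitely many points through monotonicity of the maximum --- are routine and can be dispatched quickly.
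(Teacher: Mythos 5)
Your proof is correct and follows essentially the same route as the paper's: cover the compact $K$ by finitely many pieces whose size is governed by $r$, use the conditional Poisson structure of $N$ (the infinite mass of $u^{-2}\,\d u$ together with $\P_X(B_r(o)\subset\supp(X))>0$ and the positivity of the $\Psi$-integral) to obtain, almost surely, a storm centred in each piece whose shape is positive on $B_r(o)$, and conclude via continuity and compactness that the resulting finite maximum is uniformly bounded below on $K$. If anything, your write-up is slightly more explicit than the paper's, since you spell out the infinite-intensity computation that upgrades positive probability to almost-sure existence, and your covering balls $B_{r/2}(c_j)$ automatically carry positive Lebesgue measure.
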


\begin{Remark}
	In the definition of the Cox extremal process $Y$ it is also possible to work with storm processes $X$ that may attain negative values, such as Gaussian processes. If at least \eqref{cond:Xsupport} is satisfied and the sample-continuous intensity process $\Psi$ is almost surely strictly positive, the resulting random field $Y$ will be almost surely strictly positive.
\end{Remark}

\begin{Lemma}[Stationarity]\label{lemma: stationarity}
	If the intensity process $\Psi$ is stationary, then the Cox extremal process $Y$ is stationary.
\end{Lemma}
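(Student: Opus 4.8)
The plan is to exploit the fact that shifting $Y$ in space corresponds to shifting the underlying point process $N$, and then to trace this shift back to a shift of the intensity process $\Psi$, which leaves its law invariant by assumption. It suffices to show $Y(\cdot + h) \stackrel{d}{=} Y(\cdot)$ for every fixed $h \in \R^d$; since $Y$ is sample-continuous under the standing assumptions, equality of the laws on $C(\R^d)$ is equivalent to equality of all finite-dimensional distributions, and either formulation will follow from the argument below.

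First I would rewrite the shifted field in terms of a shifted point process. Writing $s_i' = s_i - h$ gives $Y(t+h) = \bigvee_{i=1}^\infty u_i X_i(t - s_i')$, so that $Y(\cdot + h)$ is exactly the Cox extremal process generated, via the same max-functional as in \eqref{eq:defn-Y}, by the image point process $N_h = \sum_{i} \delta_{(s_i - h,\, u_i,\, X_i)}$. Here $N_h$ is the push-forward of $N$ under the measurable map $\tau_h : (s,u,X) \mapsto (s-h,u,X)$ on $\mathbb{S}$. Because $N \sim CP(\Lambda)$, the image $N_h$ is again a Cox process, directed by the image random measure $\tau_h \Lambda$.

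Next I would compute $\tau_h\Lambda$ explicitly. Since $\tau_h$ acts only on the first coordinate and Lebesgue measure $\d s$ is translation invariant, the change of variables $s \mapsto s - h$ shows that $\tau_h\Lambda$ has the representation
\[
  \d(\tau_h\Lambda)(s,u,X) = \mu_Y^{-1}\,\Psi(s+h)\,\d s\,u^{-2}\,\d u\,\d\P_X .
\]
Thus $\tau_h\Lambda$ is obtained from the shifted process $\theta_h\Psi := \Psi(\cdot + h)$ by precisely the same deterministic, measurable rule \eqref{eq:Lambda} that produces $\Lambda$ from $\Psi$. By the assumed stationarity of $\Psi$ we have $\theta_h\Psi \stackrel{d}{=} \Psi$, and applying this common rule yields the equality in law of random measures $\tau_h\Lambda \stackrel{d}{=} \Lambda$.

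Finally I would push this equality through the two remaining maps. The law of a Cox process depends only on the law of its directing random measure (it is the corresponding mixture of Poisson laws), so $\tau_h\Lambda \stackrel{d}{=} \Lambda$ gives $N_h \stackrel{d}{=} N$; applying the measurable max-functional then gives $Y(\cdot + h) \stackrel{d}{=} Y(\cdot)$, as required. The one point deserving care is the measure-theoretic bookkeeping: one must check that the functional sending a point configuration to $\bigvee_i u_i X_i(\cdot - s_i) \in C(\R^d)$ is measurable (using that, by Lemma~\ref{lemma:sample-cts}, only finitely many points contribute on each compact set, so the maximum is attained and continuous), and that ``same law of directing measure implies same law of Cox process'' is legitimate; both are routine but are where the real content sits. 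As an alternative, the whole statement can be read off directly from the finite-dimensional distribution formula in the Remark by substituting $s \mapsto s-h$ and invoking $\theta_h\Psi \stackrel{d}{=} \Psi$.
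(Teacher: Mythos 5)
Your proof is correct, but it is organized differently from the paper's. The paper proves stationarity by a direct computation of the finite-dimensional distributions: it writes $\P(Y(t_1+h)\leq y_1,\dots,Y(t_k+h)\leq y_k)$ as the expected void probability $\E_\Psi\big[\exp\big(-\mu_Y^{-1}\E_X\int_{\R^d}\bigvee_{j=1}^k X(t_j+h-s)y_j^{-1}\Psi(s)\,\d s\big)\big]$, substitutes $s\mapsto s+h$ using translation invariance of Lebesgue measure, and then invokes stationarity of $\Psi$ to remove the shift --- this is exactly the ``alternative'' you sketch in your final sentence. Your main argument instead lifts the translation to the level of the point process: $Y(\cdot+h)$ is the max-functional applied to the push-forward $N_h=\tau_h N$, which is again a Cox process directed by $\tau_h\Lambda$; translation invariance of $\d s$ identifies $\tau_h\Lambda$ as the measure produced from $\Psi(\cdot+h)$ by the same rule \eqref{eq:Lambda}, stationarity of $\Psi$ gives $\tau_h\Lambda\stackrel{d}{=}\Lambda$, hence $N_h\stackrel{d}{=}N$ and $Y(\cdot+h)\stackrel{d}{=}Y$. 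Both proofs rest on the same two ingredients (invariance of Lebesgue measure and stationarity of $\Psi$), but yours yields equality in law directly on $C(\R^d)$, at the price of the measure-theoretic bookkeeping you correctly flag: measurability of the configuration-to-field map (justified since only finitely many storms contribute on compacts, Lemma~\ref{lemma:sample-cts}) and the fact that the law of a Cox process is determined by the law of its directing random measure. The paper's computation sidesteps all of this and is shorter, but establishes only the finite-dimensional statement --- which suffices for stationarity and, given sample continuity, determines the law anyway.
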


Finally, we formulate the main result of this section.

\begin{Theorem}[Max-Domain of Attraction]
	\label{thm:domainofattraction}
	Let the (sample-continuous) intensity process $\Psi$ be stationary and almost surely strictly positive satisfying \eqref{eq:int-cond-rho} and let the (sample-continuous) storm process $X$ satisfy conditions \eqref{eq:int-cond-X} and \eqref{cond:Xsupport}. Then the random fields $Y$ and $Z$ are finite on compact sets and sample-continuous, and  the random field $Y$ lies in the max-domain of attraction of $Z$. More precisely, if the scaling constant $\mu_Y$ equals the integral \eqref{eq:int-cond-b} and $\mu_Z=\mu_Y/c_{\Psi}$, then the following convergence holds weakly in $C(\R^d)$
	\begin{align*}
		n^{-1}\bigvee_{i=1}^n Y_i\rightarrow Z,
	\end{align*}	
	where $Y_i$ are i.i.d.\ copies of $Y$.
\end{Theorem}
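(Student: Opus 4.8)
The plan is to route the proof through the point-process machinery signalled by Lemma~\ref{lemma:CP-CLT}, in three stages: (i) deduce finiteness and sample-continuity from the preparatory lemmas, (ii) recognise the rescaled maximum $n^{-1}\bigvee_{i=1}^n Y_i$ as the extremal functional of a superposition of $1/n$-thinned Cox processes and identify its Poisson limit, and (iii) transfer this point-process convergence to convergence in $C(\R^d)$ by a continuous-mapping argument.

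For stage (i) I would first check that the standing hypotheses feed the earlier lemmas. Stationarity of $\Psi$ together with \eqref{eq:int-cond-rho} turns the assumed \eqref{eq:int-cond-X} into \eqref{eq:int-cond-a} by Tonelli's theorem, exactly as recorded in \eqref{eq:int-cond-b}; meanwhile \eqref{cond:Xsupport} and the almost sure positivity of the sample-continuous $\Psi$ yield \eqref{eq:cond-sample-cts} through Lemma~\ref{lemma:sample-cts2}, for every compact $K\subset\R^d$. Lemma~\ref{lemma:sample-cts}(c), applied to $Y$ and (with $\Psi\equiv1$) to $Z$, then shows that both fields are finite on compact sets, sample-continuous, and that on each compact set only finitely many points of the driving process contribute. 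In particular $Y$, the $Y_i$ and $Z$ are genuine $C(\R^d)$-valued random elements, so the assertion $n^{-1}\bigvee_{i=1}^n Y_i\to Z$ is meaningful in $C(\R^d)$.

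The crux of stage (ii) is a change of variables in the severity coordinate. Writing $Y_i$ as the extremal functional \eqref{eq:defn-Y} of a Cox process $N_i\sim CP(\Lambda)$, dividing the whole field by $n$ amounts to replacing each point $(s_k,u_k,X_k)$ by $(s_k,u_k/n,X_k)$, because the factor $n^{-1}$ passes through the maximum. Under the dilation $u\mapsto u/n$ the measure $u^{-2}\,\d u$ pushes forward to $n^{-1}u^{-2}\,\d u$, so by \eqref{eq:Lambda} the rescaled point process is distributed as $CP(n^{-1}\Lambda)$, the spatial and shape coordinates being untouched. Hence $n^{-1}\bigvee_{i=1}^n Y_i$ is precisely the extremal functional applied to the superposition $\sum_{i=1}^n\tilde N_i$ of $n$ i.i.d.\ copies $\tilde N_i\sim CP(n^{-1}\Lambda)$. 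Lemma~\ref{lemma:CP-CLT} then gives $\sum_{i=1}^n\tilde N_i\to PP(\lambda)$ with $\lambda(A)=\E\Lambda(A)$; using $\E_\Psi\Psi(s)\equiv c_\Psi$ (stationarity and \eqref{eq:int-cond-rho}) and Tonelli this evaluates to $\lambda(\d s,\d u,\d X)=\mu_Y^{-1}c_\Psi\,\d s\,u^{-2}\,\d u\,\d\P_X$, which under the calibration $\mu_Z=\mu_Y/c_\Psi$ is exactly the directing measure of the mixed moving maxima process $Z$ in \eqref{eq:defn-Z}. Thus the limiting Poisson process is the one that generates $Z$.

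Stage (iii) is where I expect the real work. It remains to upgrade the weak convergence $\sum_{i=1}^n\tilde N_i\to PP(\lambda)$ of point processes to weak convergence of the associated maxima in the uniform-on-compacts topology of $C(\R^d)$; the natural tool is the continuous mapping theorem applied to the extremal functional $\Phi\colon\eta\mapsto\bigl(t\mapsto\bigvee_k u_kX_k(t-s_k)\bigr)$. Two features obstruct a naive application: $\Phi$ aggregates infinitely many points, and $u^{-2}\,\d u$ places infinite mass near $u=0$. I would circumvent this by truncation at the level of the contribution to the field over a fixed compact $K$: for $\delta>0$ let $\Phi_\delta$ retain only the points with $u_k\sup_{t\in K}X_k(t-s_k)>\delta$. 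By the computation underlying stage (i) the region carved out by this inequality has finite $\lambda$-mass (this is where \eqref{eq:int-cond-X} enters), so $\Phi_\delta$ is a maximum over almost surely finitely many points; moreover each discarded point contributes at most $\delta$ at every $t\in K$, which yields the deterministic bound $\sup_{t\in K}\lvert\Phi(\eta)(t)-\Phi_\delta(\eta)(t)\rvert\le\delta$ valid for every admissible configuration, both pre-limit and limit. The main obstacle is then to prove that $\Phi_\delta$ is continuous at $PP(\lambda)$-almost every configuration for the vague topology: this requires the joint continuity of $(s,u,X)\mapsto uX(\,\cdot-s)$ as a map into $C(K)$ (which rests on the sample-continuity of the shapes and of $Z$ secured in stage (i)) and the almost sure absence of limiting points exactly on the threshold $\{u\sup_{t\in K}X(\cdot-s)=\delta\}$, together with a suitable boundedness structure on $\mathbb{S}=\R^d\times(0,\infty]\times\mathbb{X}$ that makes these threshold sets the bounded ones. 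Granting the continuity of $\Phi_\delta$, the continuous mapping theorem gives $\Phi_\delta(\sum_i\tilde N_i)\to\Phi_\delta(PP(\lambda))$ in $C(K)$ for each $\delta$, and the uniform truncation bound lets $\delta\downarrow0$ in a standard bounded-Lipschitz estimate to conclude $n^{-1}\bigvee_{i=1}^n Y_i\to Z$ in $C(K)$; as $K$ is an arbitrary compact set this is the claimed convergence in $C(\R^d)$.
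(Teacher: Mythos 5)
Your stages (i) and (ii) are correct and in fact coincide with the paper's own set-up: the paper applies Lemmas~\ref{lemma:sample-cts} and \ref{lemma:sample-cts2} exactly as you do, represents $Y^{(n)}=n^{-1}\bigvee_{i=1}^n Y_i$ as the extremal functional of a Cox process directed by $\mu_Y^{-1}\,n^{-1}\sum_{i=1}^n\Psi_i(s)\,\d s\,u^{-2}\,\d u\,\d\P_X$, and invokes Lemma~\ref{lemma:CP-CLT} to identify the limiting Poisson process with directing measure $\mu_Z^{-1}\,\d s\,u^{-2}\,\d u\,\d\P_X$, $\mu_Z=\mu_Y/c_\Psi$. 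However, the paper is explicit that this point-process convergence only \emph{indicates} the theorem, and it then deliberately abandons that route: the actual proof establishes convergence of the finite-dimensional distributions (Lemma~\ref{lemma:fidi}, via the Laplace transform of $H_\Psi(y)$, l'H{\^o}pital's rule, and Corollary~5.18(a) of Resnick) and tightness of the sequence $Y^{(n)}$ in $C(\R^d)$ (Lemma~\ref{lemma:tightness}, via a modulus-of-continuity bound computed as an expected void probability, then Fatou, the strong law of large numbers, and dominated convergence).

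The genuine gap is in your stage (iii), at precisely the point you flag and then pass over with ``granting the continuity of $\Phi_\delta$''. The convergence supplied by Lemma~\ref{lemma:CP-CLT} is weak convergence of boundedly finite point processes in the weak-hash topology of Appendix~\ref{sec:pointprocesses}, in which the ``bounded'' sets of $\mathbb{S}=\R^d\times(0,\infty]\times\X$ are the metrically bounded ones. Your threshold sets $\{(s,u,X):u\sup_{t\in K}X(t-s)>\delta\}$ are \emph{not} bounded in this sense: Theorem~\ref{thm:domainofattraction} does not assume the compact-support condition \eqref{cond:Xbounded} (the paper's running example $X=\varphi$, a Gaussian density, has full support), so for every centre $s\in\R^d$, however remote from $K$, there are severities $u$ placing $(s,u,X)$ inside the threshold set; it merely has finite $\lambda$-mass by \eqref{eq:int-cond-X}. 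Weak-hash convergence says nothing about point counts in such unbounded regions, so the continuous mapping theorem cannot be applied to $\Phi_\delta$ in the topology in which the convergence is actually available; and if you re-topologize $\mathbb{S}$ so that threshold sets become bounded, Lemma~\ref{lemma:CP-CLT} (and Theorem~11.3.III of Daley and Vere-Jones behind it) is no longer available as stated and would itself have to be re-proven for that boundedness. What is missing is a uniform control ruling out that, along the pre-limit sequence, points with spatially remote centres and large severities keep contributing to the maximum on $K$; this control is exactly the substance of the paper's tightness proof, so your outline does not avoid that work --- it relocates it into the unproved continuity claim and omits it.
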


\textbf{Choices for the intensity process.} For inference reasons we shall further assume henceforth that the intensity process $\Psi$ is a stationary log Gaussian random field, that is,
\begin{align*}
	\Psi(s)=\exp(W(s)), \qquad s \in \R^d,
\end{align*}
where $W$ is stationary and Gaussian. Thereby, all requirements for $\Psi$ from the preceding Theorem~\ref{thm:domainofattraction} are guaranteed as long as $W$ has continuous sample paths. Moreover, the latter also ensures that the distribution of the random measure $\Lambda$, cf.\ \eqref{eq:Lambda}, is uniquely determined by the distribution of $W$. By \cite{moeller1998lgcp} (see also \cite{adler1981geometry}, page 60), a Gaussian process
$W$ is indeed sample-continuous if its covariance function $C$ satisfies $1-C(h)<M \lVert h \rVert^{\alpha},$ $h\in\R^d$, for some $M>0$ and $\alpha>0$.
This condition holds for most common correlation functions, for instance, the stable model $C(h)=\exp(-\lVert h \rVert^{\alpha}),$ $\alpha \in (0,2], h\in\R^d$, and the Whittle-Mat\'ern model 
\begin{align}\label{eq:matern}
	C(h)=\frac{2^{1-\nu}}{\Gamma(\nu)}(\sqrt{2\nu}h)^{\nu}K_{\nu}(\sqrt{2\nu}h),\quad \nu>0, h\in\R^d,
\end{align}
see \cite{gneiting2006matern}.\\

\textbf{Choices for the storm profiles.} For statistical inference, we rely on identifying at least some of the centres of the storms $X_i$ from observations of $Y$. As a starting point, it is reasonable to assume that the paths of $X$ satisfy a monotonicity condition, for instance that for each path $X_{\omega}$ there exist some monotonously decreasing functions $f_{\omega}$ and $g_{\omega}$ such that
\begin{align}\label{eq:Xmonotone}
	g_{\omega}(\|t\|)\leq X_{\omega}(t)\leq f_{\omega}(\|t\|)
\end{align}
and $g_{\omega}(0)= X_{\omega}(0)= f_{\omega}(0)$.
For the purpose of illustration, we will use in most of our examples a deterministic shape $X=\varphi$, with $\varphi$ being the density of the $d$-dimensional standard normal distribution as in \cite{smith1990maxstable}. See also Section~\ref{sec:discussion} for a discussion of this choice and the recovery of storm centres.


\section{Simulation}\label{sec:simu}

In many cases, functionals of max-stable processes cannot be explicitly calculated, e.g., for most models only the bivariate marginal distributions are known while the higher dimensional distributions do not have a closed-form expression. Therefore and in order to test estimation procedures, efficient and sufficiently exact simulation algorithms are desirable. However, exact simulation of (conditionally) max-stable random fields can be challenging, since a priori, its series representation \eqref{eq:defn-Y} involves taking maxima over infinitely many storm processes.
A first approach in order to simulate mixed moving maxima processes and some other max-stable processes was presented in \cite{schlather2002modelsmaxstable}. Meanwhile, several improvements with respect to exactness and efficiency have been proposed in \cite{engelke11,oesting12simbr,oesting13normspect,dieker2015simu,dombry2016} and \cite{liu2016simu}, some of which are mainly concerned with the simulation of Brown-Resnick processes.
%

Since our focus in this work is not on the simulation algorithm, it will be sufficient for us to extend the straight forward approach of \cite{schlather2002modelsmaxstable} in this article.

%

Under the (mild) conditions of Lemma~\ref{lemma:sample-cts} only finitely many
of the storms in \eqref{eq:defn-Y} contribute to the maximum if we restrict the random field to a
compact domain $D \subset \R^d$, see also \cite{deHaan2006}. Still, the centres of these contributing storms could be located on the whole $\R^d$. In order to define a feasible and exact algorithm we consider bounded storm profiles $X$ with bounded support, i.e. which satisfy condition \eqref{cond:Xbounded}. In such a situation only storms with centres within the enlarged region
$$D_R=D\oplus B_R(o)=\bigcup_{s\in D}B_R(s),$$
can contribute to the maximum \eqref{eq:defn-Y}. 

%
\begin{Proposition}[Simple Simulation Algorithm]\label{Prop:Simu}
	Let $D \subset \R^d$ be a compact subset and assume that the conditions of Lemma~\ref{lemma:sample-cts2} hold true and additionally the storm profile  X  satisfies almost surely \eqref{cond:Xbounded}. Then the following construction leads to an exact simulation algorithm on $D$ for the associated Cox extremal process $Y$.
	\begin{itemize}
		\item Let $\psi$ be a realization of the intensity process $\Psi$ and 
		$\nu_\psi(\cdot)~=~\int_{\cdot}\psi(s)~\d s$
		the associated measure on $\R^d$.
		\item Let $S_i \stackrel{i.i.d.}\sim \psi/\nu_\psi(D_R)$, $i=1,2,\dots$ be an i.i.d.\ sequence of random variables from the probability measure $\psi/\nu_\psi(D_R)$ on $D_R$.
		\item Let $X_i \stackrel{i.i.d.}\sim X$, $i=1,2,\dots$ be an i.i.d.\ sequence of storm profiles.
		\item Let $\xi_i$, $i=1,2,\dots$ be an i.i.d.\ sequence of standard exponentially distributed random variables and set $\Gamma_n=\sum_{i=1}^n \xi_i$ for $n=1,2,\dots$. 
	\end{itemize}
	Based on the stopping time
	\begin{align*}
		T=\inf\bigg\{n\geq 1: \Gamma_{n+1}^{-1}C \leq \inf_{t \in D} \bigvee_{i=1}^n \Gamma_i^{-1}X_i(t-S_i)\bigg\},
	\end{align*}
	we define the random field $\widetilde{Y}$ on $D$ via
	\begin{align*}
		\widetilde{Y}(t)=\frac{\nu_\psi(D_R)}{\mu_Y} \bigvee_{i=1}^{T} \Gamma_i^{-1}X_i(t-S_i),\quad t\in D.
	\end{align*}
	In this situation the following holds true.
	\begin{enumerate}
		\item The stopping time $T$ is almost surely finite.
		\item The law of the process $\widetilde{Y}$ coincides with the law of the Cox extremal process $Y$ restricted to $D$.
	\end{enumerate}	
\end{Proposition}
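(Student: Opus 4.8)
The plan is to argue conditionally on a realization $\Psi=\psi$ of the spatial intensity process. Given $\Psi=\psi$, the directing measure \eqref{eq:Lambda} becomes the deterministic measure $\d\lambda_\psi(s,u,X)=\mu_Y^{-1}\psi(s)\,\d s\,u^{-2}\,\d u\,\d\P_X$, so that $N$ is an ordinary Poisson process and $Y$ is the associated (non-stationary) mixed moving maxima field \eqref{eq:defn-Y}. First I would observe that, since $X\le C\ind_{B_R(o)}$ by \eqref{cond:Xbounded}, a storm centred at $s_i$ satisfies $X_i(t-s_i)=0$ for every $t\in D$ whenever $s_i\notin D_R$; hence the restriction of $N$ to centres in $D_R$ already determines $Y|_D$, and $\nu_\psi(D_R)=\int_{D_R}\psi(s)\,\d s$ is finite because $\psi$ is continuous and $D_R$ compact.

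Next I would produce an explicit ordered series representation of this restricted process. Integrating the restricted intensity over $s\in D_R$ and over $X\in\mathbb{X}$ (the latter carrying total mass one) leaves the rate $c:=\mu_Y^{-1}\nu_\psi(D_R)$ in front of $u^{-2}\,\d u$. The atoms of a Poisson process on $(0,\infty)$ with intensity $c\,u^{-2}\,\d u$, listed in decreasing order, admit the classical representation $u_i=c\,\Gamma_i^{-1}=\tfrac{\nu_\psi(D_R)}{\mu_Y}\Gamma_i^{-1}$ with $\Gamma_i=\sum_{j\le i}\xi_j$, since $\gamma\mapsto c/\gamma$ pushes the unit-rate intensity $\d\gamma$ forward to $c\,u^{-2}\,\d u$. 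Attaching to each $u_i$ an independent mark $(S_i,X_i)$ with $S_i\sim\psi/\nu_\psi(D_R)$ and $X_i\sim\P_X$, the marking theorem identifies $\sum_i\delta_{(S_i,u_i,X_i)}$ with the restricted Poisson process. Consequently, conditional on $\Psi=\psi$, one has $Y(t)=\tfrac{\nu_\psi(D_R)}{\mu_Y}\bigvee_{i=1}^\infty\Gamma_i^{-1}X_i(t-S_i)$ for all $t\in D$, which is exactly $\widetilde Y$ with the maximum taken up to $\infty$ rather than $T$.

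It then remains to show that truncating at $T$ loses nothing (giving (b)) and that $T<\infty$ (giving (a)). Writing $M_n(t)=\bigvee_{i=1}^n\Gamma_i^{-1}X_i(t-S_i)$, the monotonicity of $\Gamma_i^{-1}$ together with $X_i\le C\ind_{B_R(o)}$ yields $\Gamma_j^{-1}X_j(t-S_j)\le\Gamma_{n+1}^{-1}C$ for all $j>n$ and $t\in D$. At $n=T$ the defining inequality gives $\Gamma_{T+1}^{-1}C\le\inf_{t\in D}M_T(t)\le M_T(t)$, so no atom beyond $T$ can raise the maximum and $M_T\equiv M_\infty$ on $D$. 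Multiplying by the common factor $\nu_\psi(D_R)/\mu_Y$ shows $\widetilde Y=Y|_D$ conditionally on $\psi$, and integrating over the law of $\Psi$ yields the asserted equality of laws. For finiteness of $T$, I would combine $\Gamma_n\to\infty$ almost surely (so $\Gamma_{n+1}^{-1}C\to0$) with the fact that $\inf_{t\in D}M_n(t)$ is non-decreasing and, by condition \eqref{eq:cond-sample-cts} secured through Lemma~\ref{lemma:sample-cts2}, bounded below by a strictly positive (random) constant $\delta$ once $n$ exceeds the finite index at which the dominant storms already cover $D$; hence $\Gamma_{n+1}^{-1}C<\delta\le\inf_{t\in D}M_n(t)$ for large $n$, forcing $T<\infty$.

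The main obstacle is the passage from the abstract Cox/Poisson object to the concrete ordered series: one must justify the $\Gamma_i^{-1}$ representation in spite of the non-integrable singularity of $u^{-2}\,\d u$ at the origin (i.e.\ infinitely many small storms), and, crucially, verify through \eqref{cond:Xsupport} and Lemma~\ref{lemma:sample-cts2} that finitely many of the largest storms already generate a strictly positive floor $\inf_{t\in D}Y(t)>0$. Without this positive infimum the stopping condition need never be met, so this is the step on which the almost sure finiteness of $T$ genuinely rests.
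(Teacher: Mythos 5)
Your proposal is correct and takes essentially the same route as the paper's own proof: conditioning on $\Psi=\psi$, representing the Poisson process restricted to centres in $D_R$ through the points $\nu_\psi(D_R)\mu_Y^{-1}\Gamma_i^{-1}$ with independent marks $(S_i,X_i)$, using \eqref{cond:Xbounded} and the definition of $T$ to show the truncated maximum equals the full one, and deducing almost sure finiteness of $T$ from a Lemma~\ref{lemma:sample-cts2}-type positive lower bound on $\inf_{t\in D}\bigvee_{i=1}^n\Gamma_i^{-1}X_i(t-S_i)$ combined with $\Gamma_{n+1}^{-1}C\to 0$. You merely spell out in more detail (mapping and marking theorems) what the paper states tersely.
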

Beyond this extension, we would like to point out that in fact all previous procedures for simulation of instationary mixed moving maxima processes can be adapted for the simulation of Cox extremal processes in a similar way. For instance, by using a transformed representation of the original process $Y$, the efficiency improvement of \cite{oesting13normspect} can be transferred as well.

\begin{Remark}
	When condition \eqref{cond:Xbounded} is not satisfied, we choose $R$ and $C$ such that
	\begin{align}\label{eq:approxsim}
		\P\bigg(\sup_{t\in \R^d\setminus B_R(o)}X(t)>\varepsilon\bigg)\leq \alpha \quad \text{ and } \quad 	\P\bigg(\sup_{t\in B_R(o)}X(t)>C\bigg)\leq \alpha
	\end{align}
	hold true for some prescribed small $\varepsilon>0$ and $\alpha>0$ and approximate $X$ by its truncation $\min(X\1_{B_R},C)$ in the preceding algorithm, whence simulation will be only approximately exact. 
	For example, let us consider a generalization of the Smith model in $\R^d$, i.e. $X=\varphi$ with $\varphi$ the bivariate standard normal density. Then \eqref{eq:approxsim} is satisfied with $\varepsilon=10^{-4}, \alpha=0, R\approx 3.89$ and $C=(2\pi)^{-1/2}$. Figure \ref{fig:Plot1a} depicts two plots of a Cox extremal process $Y$ and its underlying log Gaussian random field $\Psi$.
\end{Remark}

%
\begin{figure}
	\centering
	\begin{minipage}[b]{7.7 cm}
		\includegraphics[width=0.8\textwidth]{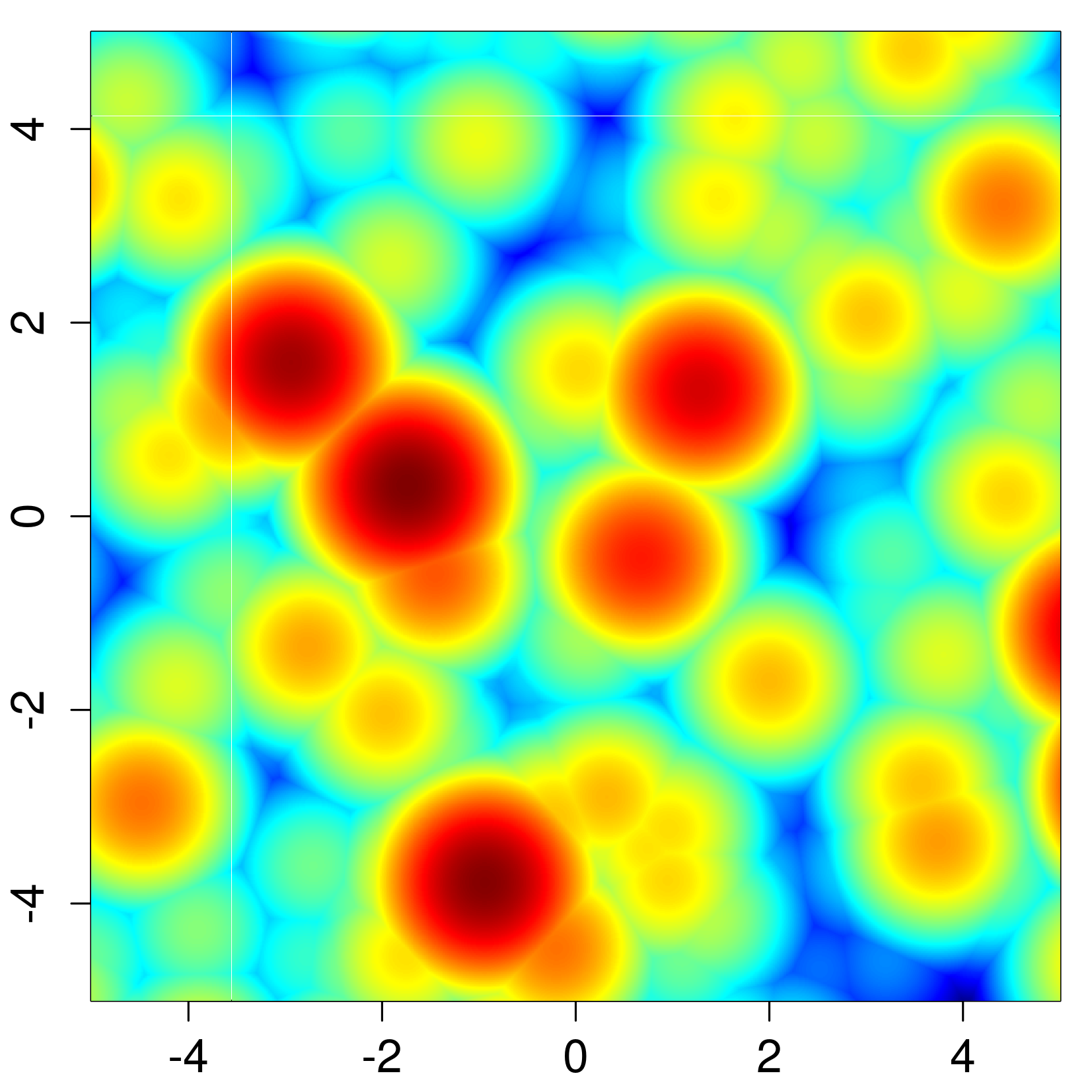}
		\includegraphics[width=0.17\textwidth]{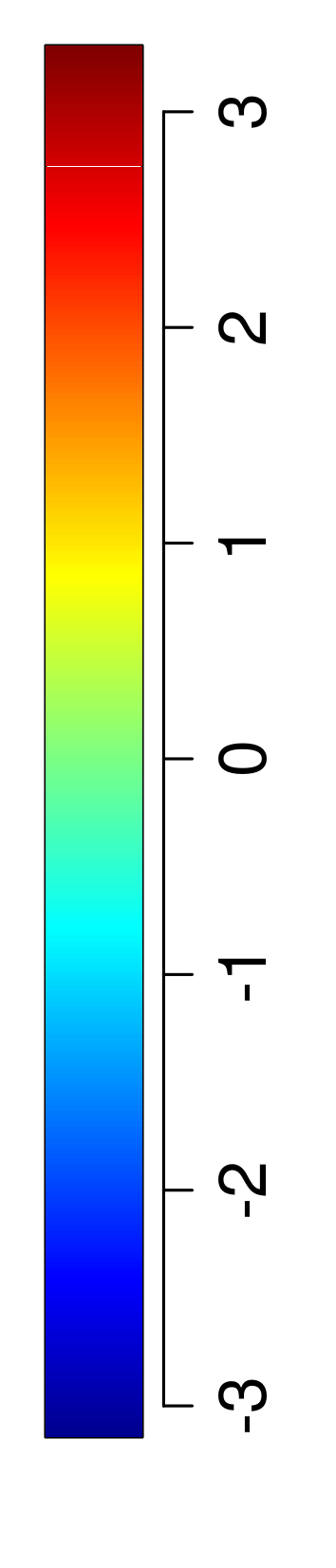}
	\end{minipage}
	\begin{minipage}[b]{7.7cm}
		\includegraphics[width=0.8\textwidth]{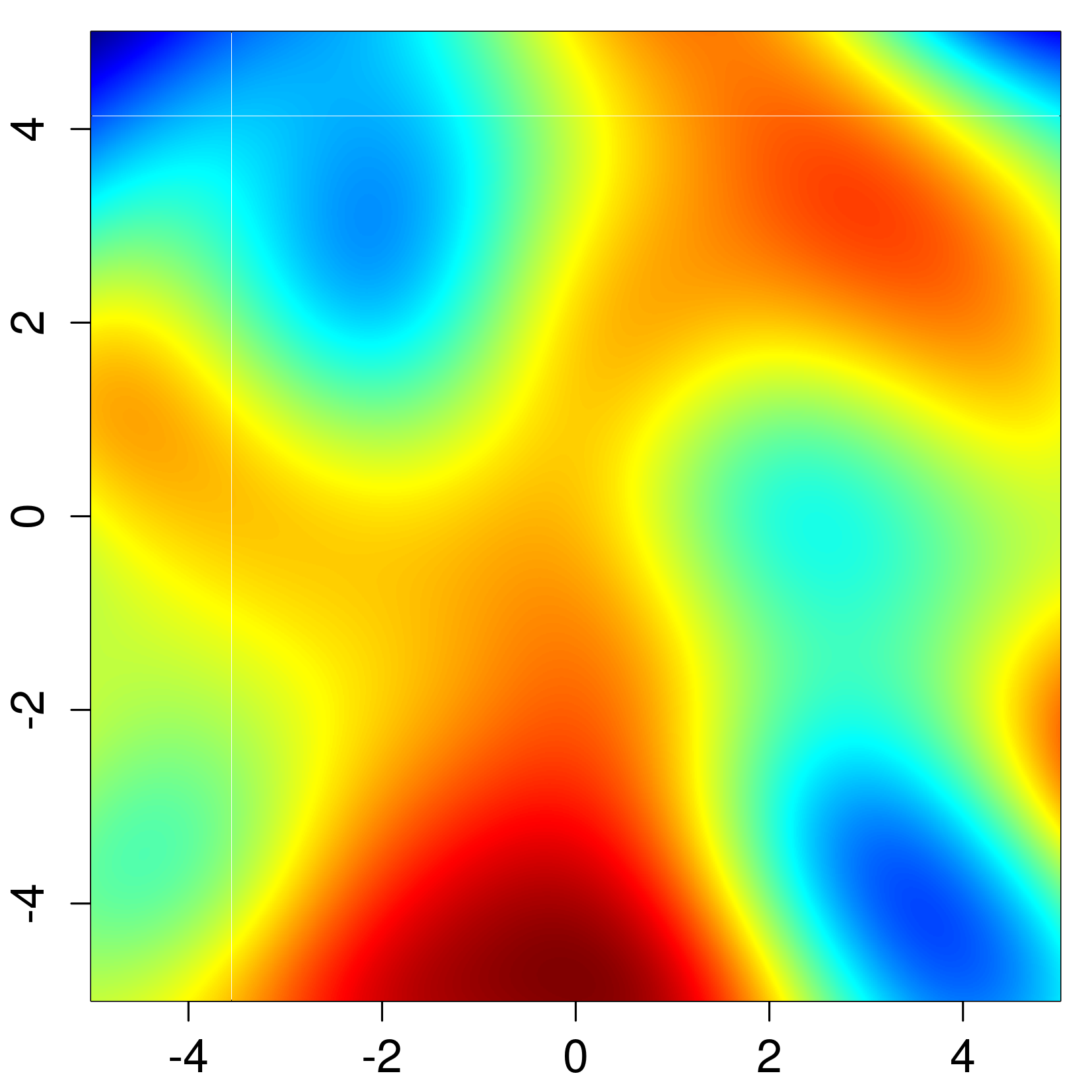}
		\includegraphics[width=0.17\textwidth]{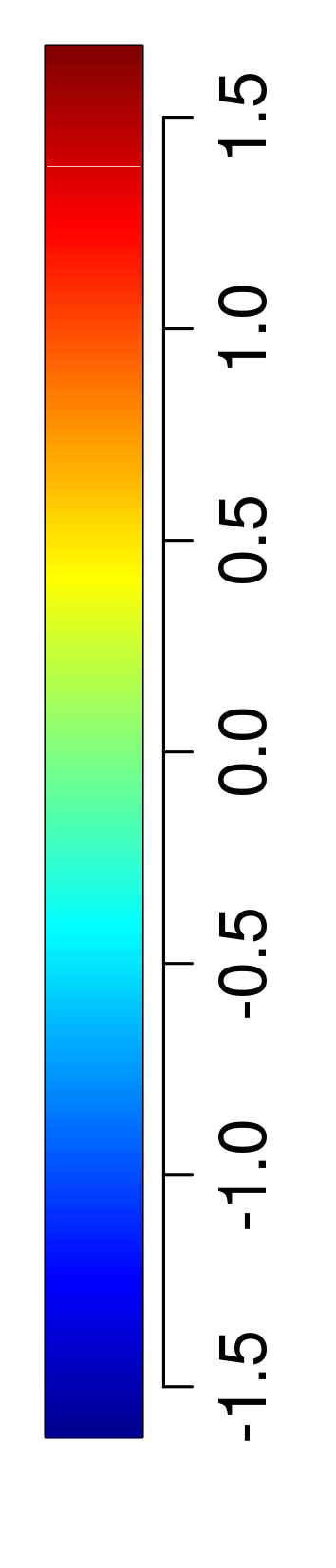}
	\end{minipage}

	\begin{minipage}[b]{7.7cm}
		\includegraphics[width=0.8\textwidth]{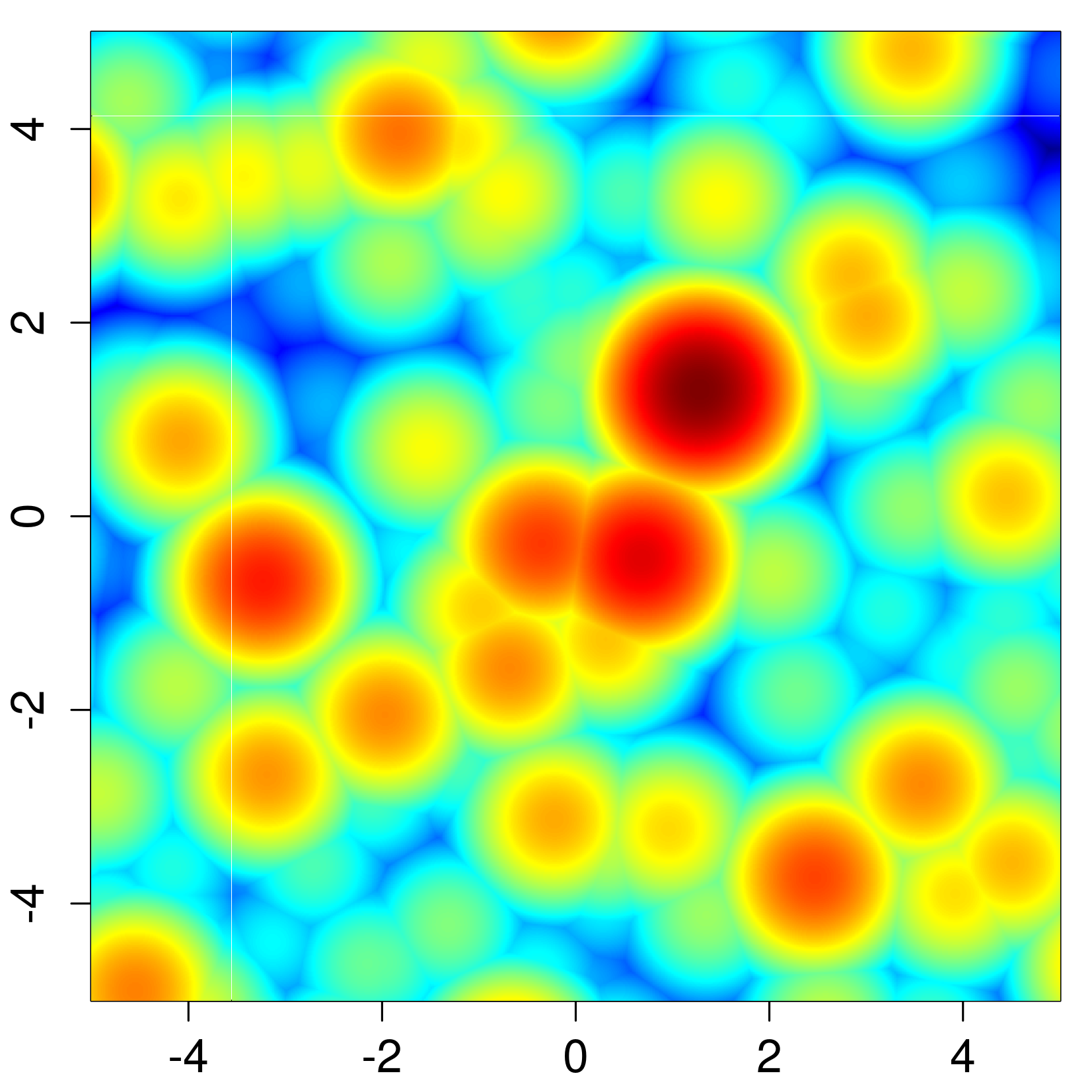}
		\includegraphics[width=0.17\textwidth]{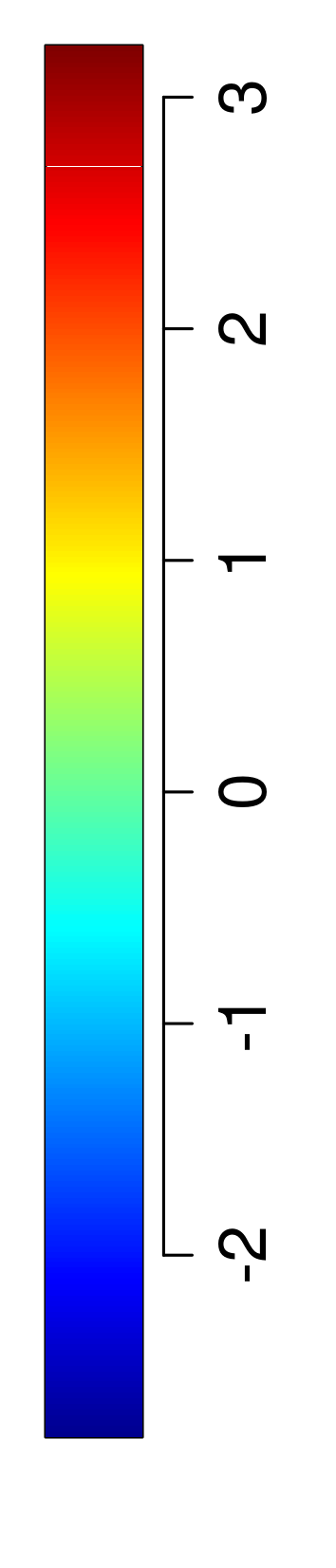}
	\end{minipage}
	\begin{minipage}[b]{7.7cm}
		\includegraphics[width=0.8\textwidth]{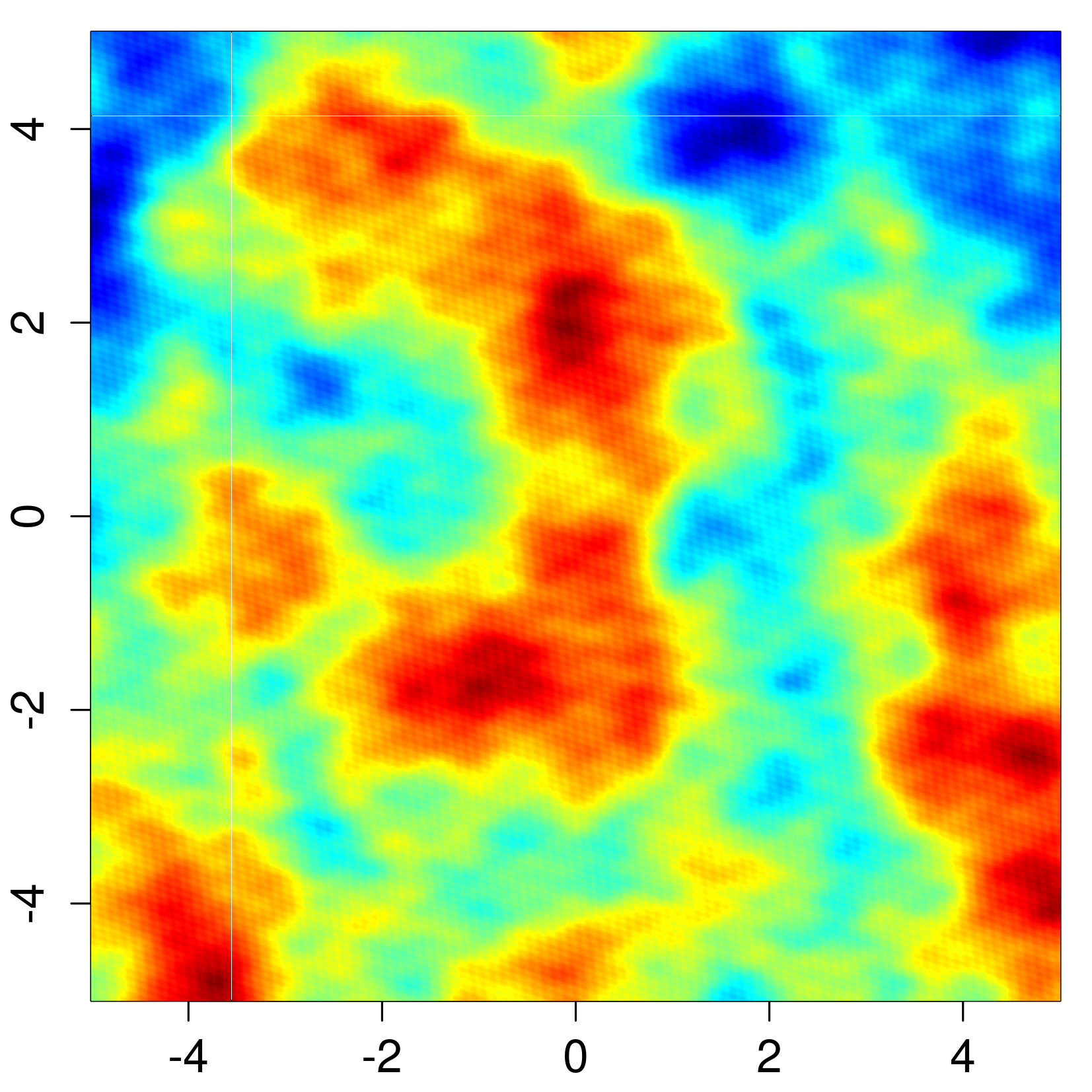}
		\includegraphics[width=0.17\textwidth]{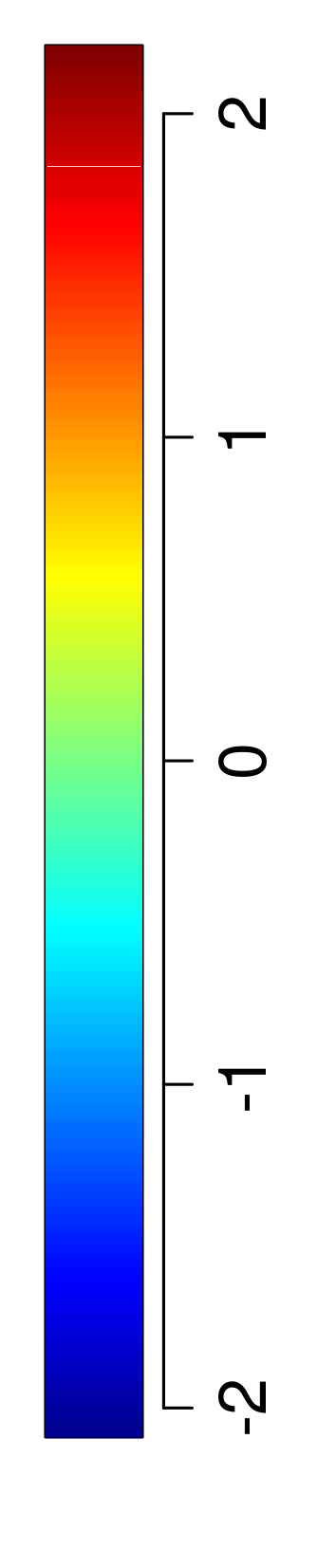}
	\end{minipage}
	\caption{Cox extremal processes $Y$ (left) and underlying log Gaussian random fields $\Psi$ (right).
		The covariance of $\log \Psi$ is of Whittle-Mat\'ern type with $\mathrm{var}=1$, $\mathrm{scale}=2$ and $\nu=\infty$ (upper plots) and $\nu=1$ (lower plots) respectively. The plots have been transformed to a logarithmic scale and the storm profiles have deterministic shape $X=\varphi$.}
	\label{fig:Plot1a}
\end{figure}
\section{Non-parametric inference on the realization $\psi$ of the intensity process $\Psi$}\label{sec:estpsinonpara}
The first part of this section provides theoretical tools for inference on the realization $\psi$ of the spatial intensity process $\Psi$. We derive a non-parametric estimator of a single realization $\psi$ from observations of the Cox extremal process $Y$ and their storm centres and state conditions for its convergence. In the second part we adapt this estimator to a non-asymptotic setting.

In both situations we assume that the shape function $X$ is known. Estimation of $X$ is beyond the scope of this article. However, since the process $Y$ lies in the MDA of $Z$, procedures for parametric estimation of the shape of the mixed moving maxima process $Z$ itself can be applied. For instance \textit{madograms} (see \cite{matheron1987} and \cite{cooley2005}), \textit{censored likelihood} \citep{nadarajah1998mex,schlathertawn2003depmeasure} or \textit{composite likelihood} \citep{castruccio15}. The recent article of \cite{huser2016} gives an overview of such likelihood methods.
We focus on inference of the spatial intensity process $\Psi$ hereinafter.	

\subsection{General theory and asymptotics} \label{sec:estpsitheo}

We are aiming at recovering the  realization $\psi$ of the intensity process $\Psi$ from observations of the Cox extremal process $Y$ as in \eqref{eq:defn-Y} and the ensemble of contributing storm centres $s_i$.
To this end, we define the point process
\begin{align}
	N_K^{Y^{\ast}}=\sum_{i=1}^{\infty}\delta_{s_i}\1_{\{\sup_{t\in K} X_i(t-s_i)Y^{\ast}(t)^{-1}\geq u_i^{-1}\}},
\end{align}
for a compact set $K\subset\R^d$ and some almost surely positive random field $Y^{\ast}$ which can be interpreted as a data-driven threshold and which we allow to depend on $Y$.
Indeed, if $Y^{\ast}$ equals $Y$, then the resulting process $N_K^Y$ is the ensemble of locations whose corresponding shape functions contribute to $Y$ on $K$, see also Figures \ref{fig:contstorms} and \ref{fig:contstormscenterv2}. That is, $N_K^{Y}$ is the location component of the process of extremal functions introduced by \cite{dombry2013contpoints} and \cite{oesting2014randpartition}.

Moreover, conditions \eqref{cond:Xbounded} and \eqref{cond:Xsupport} imply that $N_K^{Y}$ is almost surely a finite point process. Furthermore, with $K_R=K\oplus B_R(o)$ we have
\begin{align}\supp \left(N_K^{Y}\right)\subset K_R, \label{eq:contpoints}	\end{align}
almost surely. 
\begin{figure}
	\centering
	\begin{minipage}[b]{7.7cm}
		\includegraphics[width=1\textwidth]{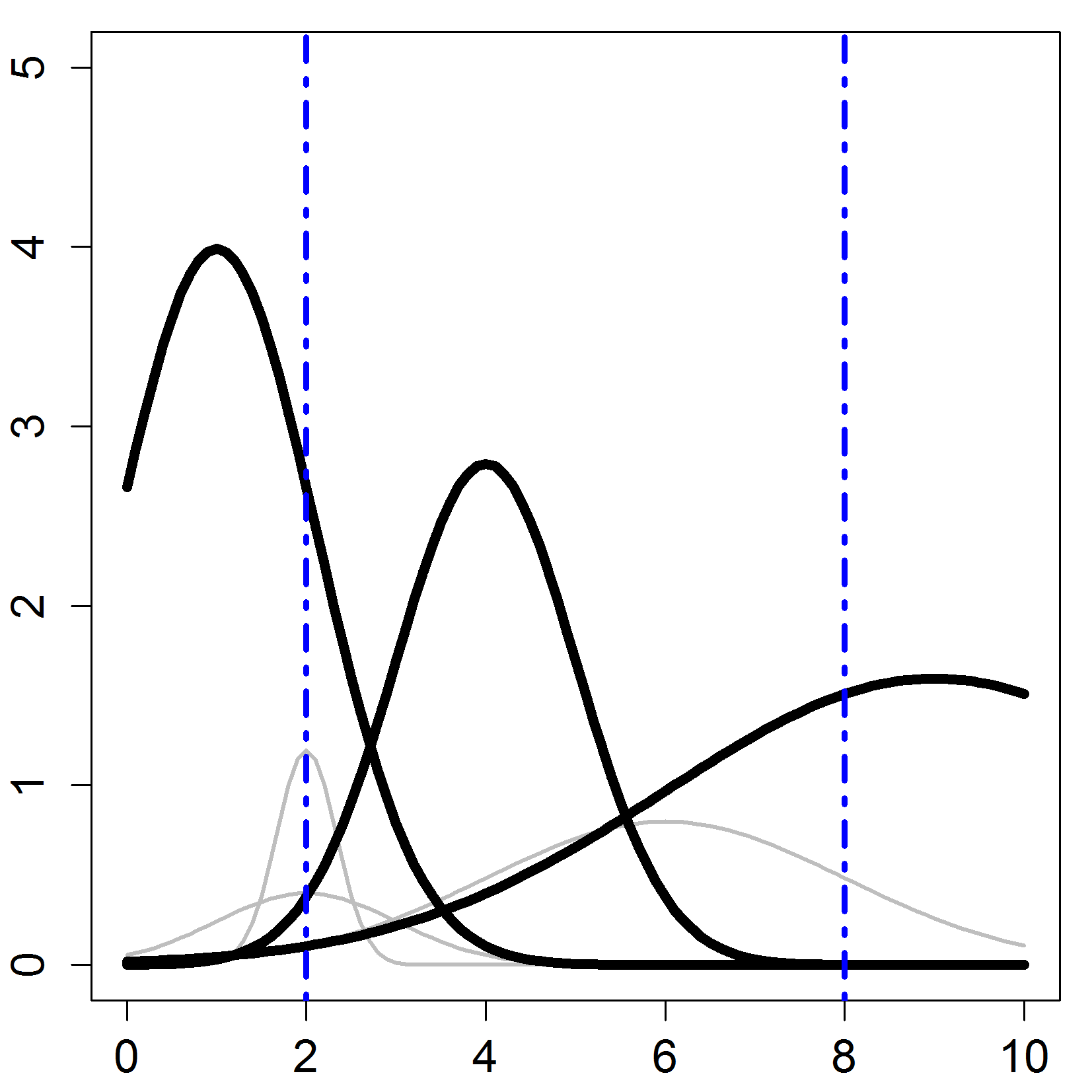}
		\caption{Contributing storms (black), the grey ones do not contribute to the final process on $K~=~[2,8]$.\protect\phantom{Some points of $N_K^Y$ are outside of $K$.aaaaaaaaaaaaaaaaaaaaaaaaaaaaaaaaaaaaaaaaaa}\\}
		\label{fig:contstorms}
	\end{minipage}
	\hspace{6mm}
	\begin{minipage}[b]{7.7cm}
		\includegraphics[width=1\textwidth]{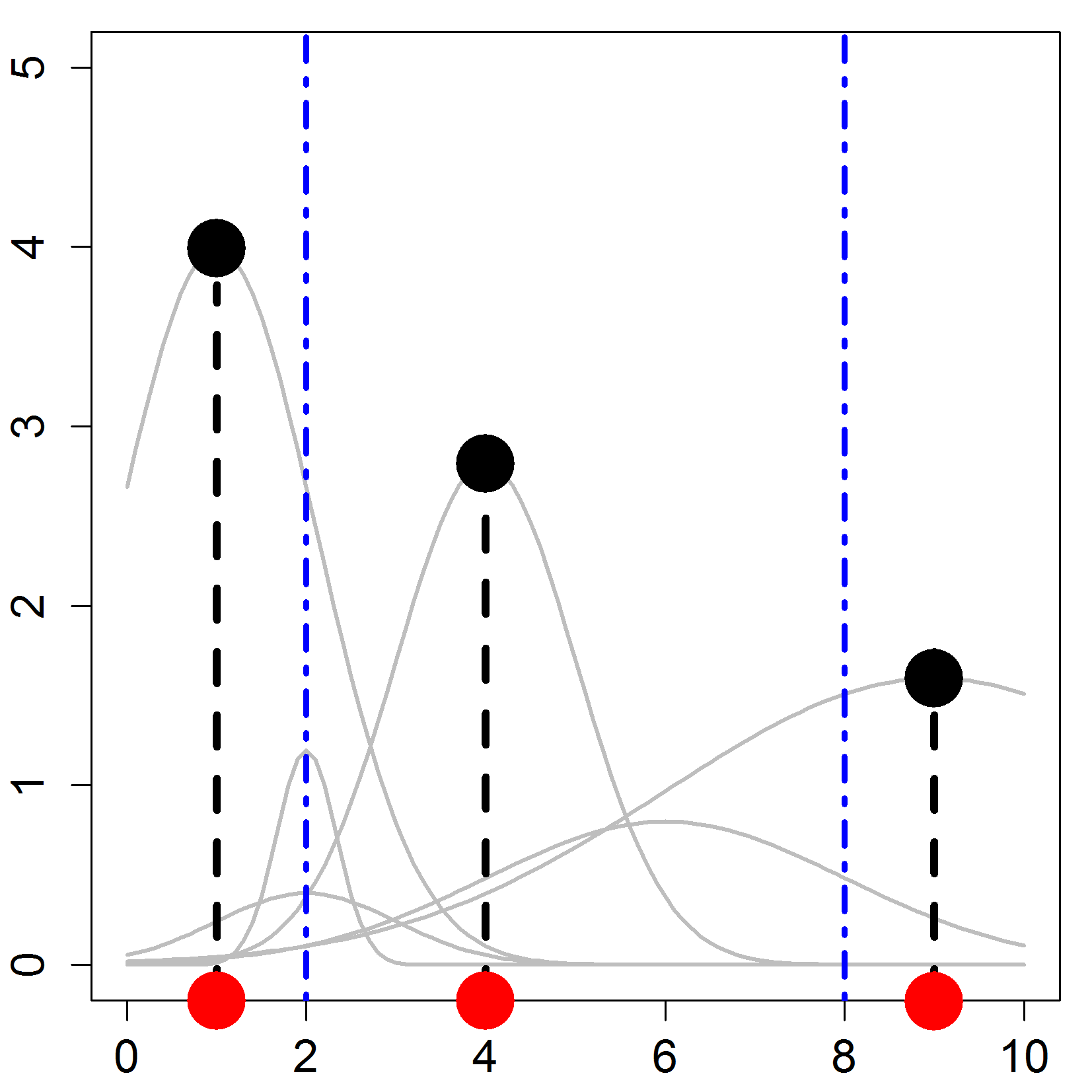}
		\caption{Storm centres (black dots) and location of the storms (red). The red dots correspond to the point process $N_K^{Y}$. Some points of $N_K^Y$ are outside of $K$.}
		\label{fig:contstormscenterv2}
	\end{minipage}
\end{figure}
To advance theory, we henceforth assume that, conditional on $\Psi=\psi$, the process $Y^{\ast}$ is a copy of $Y$ that is independent of $N$. 
\begin{Lemma}
	\label{lemma:contpointsint}
	Assume that the conditions \eqref{cond:Xbounded} and \eqref{cond:Xsupport} are satisfied. Let $Y^{\ast}\vert_{\Psi=\psi}$ be an independent copy of $Y\vert_{\Psi=\psi}$ which is independent of $N \vert_{\Psi=\psi}$. Let $K\subset\R^d$ be a compact set. Then $N_K^{Y^{\ast}}$ is a Cox process on $K_R$. More specifically
	\begin{align*}
		N_K^{Y^{\ast}}\vert_{\Psi=\psi,Y^\ast=y} \sim PP\left( b_K^{y}(s) \psi(s)~\d s\right)
	\end{align*}
	is a Poisson point process, whose intensity function equals $\psi(s)$ up to the correcting factor
	\begin{align}\label{eq:by}
		b^{y}_K(s)=\mu_{Y}^{-1} \E_X\left[\sup_{t\in K} \frac{X(t-s)}{y(t)}\right], \quad s \in \R^d,
	\end{align}
	whose support lies in the set $K_R$.
\end{Lemma}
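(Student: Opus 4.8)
The plan is to work conditionally on $\Psi=\psi$ and $Y^\ast=y$ and to recognise $N_K^{Y^\ast}$ as a deterministically restricted and projected Poisson process, to which the Restriction and Mapping Theorems for Poisson processes (see, e.g., \citep{daley2008pp}) apply.

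First I would condition on $\Psi=\psi$. By construction \eqref{eq:Lambda}, the Cox process $N$ then becomes a Poisson process on $\mathbb{S}=\R^d\times(0,\infty]\times\X$ with the deterministic intensity measure $\Lambda_\psi(\d s,\d u,\d X)=\mu_Y^{-1}\psi(s)\,\d s\,u^{-2}\,\d u\,\d\P_X$. Since, conditional on $\Psi=\psi$, the threshold field $Y^\ast$ is independent of $N$, conditioning additionally on $Y^\ast=y$ leaves the law of $N$ unchanged: it is still $PP(\Lambda_\psi)$, while the event defining the thinning becomes deterministic. Writing $\pi(s,u,X)=s$ for the projection onto the location coordinate and
\begin{align*}
	A_y=\Big\{(s,u,X)\in\mathbb{S}\ :\ u\,\sup_{t\in K}\frac{X(t-s)}{y(t)}\ \ge\ 1\Big\},
\end{align*}
one checks directly from the definition of $N_K^{Y^\ast}$ that $N_K^{Y^\ast}=\pi_\ast\big(N|_{A_y}\big)$, i.e.\ $N_K^{Y^\ast}$ is the image under $\pi$ of the restriction of $N$ to $A_y$.

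Next I would apply the Restriction Theorem to obtain $N|_{A_y}\sim PP(\Lambda_\psi|_{A_y})$ and then the Mapping Theorem to conclude that $\pi_\ast(N|_{A_y})$ is again a Poisson process, whose intensity measure is the pushforward $\pi_\ast(\Lambda_\psi|_{A_y})$. It then remains to compute this pushforward density. For fixed $s$ set $M=\sup_{t\in K}X(t-s)/y(t)\ge 0$; since $\int_0^\infty \ind\{uM\ge1\}\,u^{-2}\,\d u=M$, Tonelli's theorem yields the $s$-density
\begin{align*}
	\rho(s)=\mu_Y^{-1}\psi(s)\int_{\X}\int_0^\infty \ind_{A_y}(s,u,X)\,u^{-2}\,\d u\,\d\P_X(X)=\mu_Y^{-1}\psi(s)\,\E_X\Big[\sup_{t\in K}\frac{X(t-s)}{y(t)}\Big],
\end{align*}
which is exactly $b_K^y(s)\,\psi(s)$ from \eqref{eq:by}, proving the stated conditional law. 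The support statement follows from \eqref{cond:Xbounded}: if $s\notin K_R$ then $X(t-s)=0$ for every $t\in K$ $\P_X$-a.s., so $b_K^y(s)=0$ and $\rho$ vanishes outside the compact set $K_R$; combined with the boundedness of $\psi$ and the strict positivity and continuity of $y$ on $K_R$ (so that $\inf_{K_R}y>0$) this also makes $\int_{K_R}\rho(s)\,\d s$ finite, consistent with the a.s.\ finiteness noted at \eqref{eq:contpoints}. Finally, de-conditioning over the random pair $(\Psi,Y^\ast)$ exhibits $N_K^{Y^\ast}$ as a Cox process directed by the random measure $b_K^{Y^\ast}(s)\,\Psi(s)\,\d s$.

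The main obstacle I anticipate is not the probabilistic skeleton above but the measure-theoretic bookkeeping that legitimises the Mapping Theorem: I must verify that $X\mapsto \sup_{t\in K}X(t-s)/y(t)$ is a genuinely measurable functional on $\X=C(\R^d)$ (it is, being a supremum over the compact $K$ of jointly continuous evaluations, with $y$ continuous and bounded away from $0$ on $K$), that $\Lambda_\psi$ is $\sigma$-finite so that the restriction and pushforward are well defined, and that $\pi_\ast(\Lambda_\psi|_{A_y})=\rho(s)\,\d s$ is non-atomic, which guarantees that $\pi_\ast(N|_{A_y})$ is a simple point process and that distinct points of $N$ do not collapse to a common location under $\pi$. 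The conditional-independence step, which is what turns the data-dependent threshold $Y^\ast$ into a \emph{deterministic} thinning set $A_y$, is the other point to state carefully, since it is precisely the hypothesis that $Y^\ast|_{\Psi=\psi}$ be independent of $N|_{\Psi=\psi}$ that allows conditioning on $Y^\ast=y$ without disturbing the Poisson law of $N$.
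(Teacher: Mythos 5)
Your proposal is correct and follows essentially the same route as the paper: condition on $\Psi=\psi$ and use the independence hypothesis to condition on $Y^\ast=y$ without disturbing the Poisson law of $N$, so that $N_K^{Y^\ast}$ becomes a deterministic thinning (restriction plus projection) of $PP(\Lambda_\psi)$, whose intensity is obtained by integrating $u^{-2}\,\d u$ over the retained set, i.e.\ $\int_0^\infty \ind\{uM\geq 1\}\,u^{-2}\,\d u=M$, and then taking $\E_X$. The paper's own proof is just a compressed version of this argument (it states the thinning and the resulting Poisson parameter directly), so your more explicit treatment of the Restriction/Mapping Theorems, the Tonelli step, and the support of $b_K^y$ merely fills in details the paper leaves implicit.
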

The correcting factor $b_K^{y}(s)$ is in principle known if the shape process $X$ is known. If $n$ i.i.d.\ realizations
\begin{align*}
	\varphi_i \stackrel{i.i.d.}{\sim}N_K^{Y^{\ast}} \vert_{\Psi=\psi,Y^*=y}
\end{align*}
of the point process $N_K^{Y^{\ast}} \vert_{\Psi=\psi,Y^*=y}$ are given, we estimate its intensity $\psi_K^y(s)=b^y_K(s)\psi(s)$ in a non-parametric way by a kernel estimator as follows. Consider some kernel $k(\cdot)$ and bandwidth $h_{m_n}$. Let $\sum_{i=1}^{m_n} \delta_{t_i}$ represent the ensemble of individual points of the point process $\sum_{i=1}^n \varphi_i$. 
A sequence of kernel estimators $\widehat{{\psi}}_{K,n}^{y}$ for $\psi^y_K$ is given by
\begin{align}\label{eq:kerneldensest}
	\widehat{{\psi}}_{K,n}^{y}(s)=\frac{1}{m_nh_{m_n}^d}\sum_{i=1}^{m_n}k\left(\frac{s-t_i}{h_{m_n}}\right).
\end{align}

\begin{Theorem}[Uniform Convergence, \cite{rao83unifconv}]\label{lemma:uniformconvergence}
	Assume that the sequence of bandwidths $h_n$ satisfies $h_n\rightarrow 0$ and $nh_n^d/|\log n|\rightarrow\infty$ and let $c_{K}^y=\int \psi_K^y(s)~\d s$. Then conditions \eqref{cond:Xbounded} and \eqref{cond:Xsupport} imply that
	\begin{align*}
		\underset{s\in\R^d}{\sup\ }|c_K^y\widehat{{\psi}}_{K,n}^{y}-{\psi}_K^{y}|\rightarrow 0,\quad \text{almost surely.}
	\end{align*}
\end{Theorem}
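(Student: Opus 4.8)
The plan is to reduce the claim to the classical strong uniform consistency theorem for kernel density estimators, the Poisson structure from Lemma~\ref{lemma:contpointsint} being the bridge. First I would use that lemma to identify each $\varphi_i$, conditional on $\Psi=\psi$ and $Y^\ast=y$, as a Poisson point process on the compact set $K_R$ with intensity $\psi_K^y(s)=b_K^y(s)\psi(s)$. Since the $\varphi_i$ are i.i.d., their superposition $\sum_{i=1}^n\varphi_i$ is again Poisson, with intensity $n\psi_K^y$. Hence the total number of points $m_n$ is Poisson distributed with mean $n c_K^y$, and by the defining property of Poisson processes the points $t_1,\dots,t_{m_n}$ are, conditionally on $m_n$, i.i.d.\ with probability density $f=\psi_K^y/c_K^y$ on $K_R$.

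With this observation the estimator $\widehat{\psi}_{K,n}^{y}$ of \eqref{eq:kerneldensest} is precisely the ordinary kernel density estimator of $f$ built from the i.i.d.\ sample $t_1,\dots,t_{m_n}$. I would then invoke directly the strong uniform consistency result of \cite{rao83unifconv}: for an i.i.d.\ sample from a uniformly continuous density $f$ and a bandwidth satisfying $h_m\to0$ and $m h_m^d/|\log m|\to\infty$, one has $\sup_s|\widehat f_m(s)-f(s)|\to0$ almost surely. Multiplying by the finite constant $c_K^y$ turns this into $\sup_s|c_K^y\widehat{\psi}_{K,n}^{y}(s)-\psi_K^y(s)|\to0$ almost surely, which is the assertion.

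Two points need attention, and I expect the second to be the genuine obstacle. First, the sample size $m_n$ is random and the bandwidth is indexed by it; here conditions \eqref{cond:Xbounded} and \eqref{cond:Xsupport} guarantee $c_K^y\in(0,\infty)$, so the strong law of large numbers gives $m_n/n\to c_K^y$ and in particular $m_n\to\infty$ almost surely, whence the hypotheses $h_n\to0$ and $n h_n^d/|\log n|\to\infty$ carry over to the realized indices $m_n$. Second, the cited theorem requires $f$ to be uniformly continuous, and this is where the structural conditions enter decisively. Since $\psi$ is sample-continuous and the support of $f$ is contained in the compact set $K_R$, it suffices to show that the correcting factor $b_K^y(s)=\mu_Y^{-1}\E_X\left[\sup_{t\in K}X(t-s)/y(t)\right]$ depends continuously on $s$. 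The bound $X\le C\ind_{B_R(o)}$ from \eqref{cond:Xbounded}, together with sample-continuity of $X$ and strict positivity of $y$, allows one to pass continuity through the supremum over the compact set $K$ and then, by dominated convergence, through the expectation $\E_X$; this yields continuity of $b_K^y$, hence uniform continuity of $\psi_K^y$ on $K_R$. Once this regularity is in place, the remaining steps are a direct application of the classical result.
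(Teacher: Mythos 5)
Your proposal is correct and follows essentially the same route as the paper: both reduce the claim to the strong uniform consistency theorem for kernel density estimators in \cite{rao83unifconv} (Theorem 3.1.7 there), after noting that $\psi_K^y$ is uniformly continuous with compact support $K_R$. Your write-up in fact supplies details the paper's one-line proof leaves implicit—the Poisson-to-i.i.d.\ reduction, the handling of the random sample size $m_n$, and the continuity of the correction factor $b_K^y$ via dominated convergence—all of which are sound.
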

The estimator \eqref{eq:kerneldensest} for $\psi_K^y=b_K^y\psi$ suggests to estimate the intensity $\psi$ on the interior of $K_R$ by
\begin{align}\label{eq:estpsi}
	\widehat{\psi}_{K,n}(s)=[b^{y}_K(s)]^{-1}\widehat{{\psi}}_{K,n}^{y}(s),\quad s\in K_R.
\end{align}
\newpage
\begin{Corollary}\label{cor:uniformconvergence}
	Under the assumptions of Theorem~\ref{lemma:uniformconvergence} and if	\begin{align}\label{cond:Xboundary}
		X(s)>0,\ \forall s\in \R^d, \|s\|<R,\quad \text{almost surely,}
	\end{align}
	then $c_K^y\widehat{\psi}_{K,n}$ converges uniformly to $\psi$ on $K_{R-\varepsilon}$ for all $\varepsilon>0$, i.e.\
	\begin{align*}
		\underset{s\in K_{R-\varepsilon}}{\sup\ }|c_K^y\widehat{{\psi}}_{K,n}(s)-{\psi}(s)|\rightarrow 0,\quad \text{almost surely.}
	\end{align*}	
\end{Corollary}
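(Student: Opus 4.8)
The plan is to reduce the claim to Theorem~\ref{lemma:uniformconvergence} by exploiting that the two estimators differ only by the known deterministic factor $[b_K^y]^{-1}$. Since $\psi_K^y = b_K^y\psi$ by definition and $\widehat{\psi}_{K,n} = [b_K^y]^{-1}\widehat{\psi}_{K,n}^y$ on $K_R$ by \eqref{eq:estpsi}, a direct computation gives, at every $s$ where $b_K^y(s) > 0$, the pointwise identity
\begin{align*}
c_K^y\widehat{\psi}_{K,n}(s) - \psi(s) = [b_K^y(s)]^{-1}\left(c_K^y\widehat{\psi}_{K,n}^y(s) - \psi_K^y(s)\right), \qquad s\in K_R.
\end{align*}
Taking absolute values and the supremum over $s\in K_{R-\varepsilon}$ yields
\begin{align*}
\sup_{s\in K_{R-\varepsilon}}\left|c_K^y\widehat{\psi}_{K,n}(s) - \psi(s)\right| \leq \left(\sup_{s\in K_{R-\varepsilon}}[b_K^y(s)]^{-1}\right)\cdot\sup_{s\in\R^d}\left|c_K^y\widehat{\psi}_{K,n}^y(s) - \psi_K^y(s)\right|.
\end{align*}
The second factor tends to $0$ almost surely by Theorem~\ref{lemma:uniformconvergence}, so it remains only to show that the first factor is finite, i.e.\ that $b_K^y$ is bounded away from zero on the compact set $K_{R-\varepsilon}$.

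This last step is the heart of the argument and the only place where the boundary condition \eqref{cond:Xboundary} and the shrinkage by $\varepsilon$ enter. I would establish $\inf_{s\in K_{R-\varepsilon}}b_K^y(s) > 0$ by showing that $b_K^y$ is continuous and strictly positive on $K_{R-\varepsilon}$; a continuous function that is strictly positive on a compact set attains a positive minimum. For continuity, fix $s_0$ and let $s\to s_0$: for $\P_X$-almost every sample path, $X$ is uniformly continuous on compacts and $y$ is continuous and bounded below by some $m_y>0$ on $K$, so $t\mapsto X(t-s)/y(t)$ converges uniformly on $K$ as $s\to s_0$, hence so do the suprema; since \eqref{cond:Xbounded} provides the bounded (in particular integrable) envelope $C/m_y$, dominated convergence transfers this to $b_K^y$. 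For strict positivity, I would use that every $s\in K_{R-\varepsilon}$ admits some $t_0\in K$ with $\|t_0 - s\|\leq R-\varepsilon < R$; by \eqref{cond:Xboundary} we then have $X(t_0-s) > 0$ almost surely, whence $\sup_{t\in K}X(t-s)/y(t) \geq X(t_0-s)/y(t_0) > 0$ almost surely, and therefore $b_K^y(s) = \mu_Y^{-1}\E_X[\sup_{t\in K}X(t-s)/y(t)] > 0$.

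I expect the strict positivity of $b_K^y$ to be the main obstacle, and this is precisely why the statement is phrased on $K_{R-\varepsilon}$ rather than on $K_R$ itself: on the outer boundary of $K_R$ the nearest admissible centre $t_0$ sits at distance exactly $R$ from $s$, where \eqref{cond:Xboundary} no longer forces $X(t_0-s) > 0$, so $b_K^y$ may vanish there and the factor $[b_K^y]^{-1}$ would blow up. Shrinking the domain by any $\varepsilon > 0$ keeps the relevant distances strictly below $R$ and restores a uniform lower bound on $b_K^y$, which combined with the two displays above completes the proof.
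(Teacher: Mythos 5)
Your proposal is correct and follows essentially the same route as the paper: reduce to Theorem~\ref{lemma:uniformconvergence} via the identity $\widehat{\psi}_{K,n}=[b_K^y]^{-1}\widehat{\psi}_{K,n}^y$, $\psi=[b_K^y]^{-1}\psi_K^y$, and use condition \eqref{cond:Xboundary} to bound $[b_K^y]^{-1}$ uniformly on $K_{R-\varepsilon}$. The only difference is that you spell out why $b_K^y$ is bounded away from zero there (continuity via dominated convergence plus strict positivity on a compact set), a step the paper merely asserts, so your write-up is, if anything, more complete.
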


\subsection{Practical issues in a non-asymptotic setting}\label{sec:estpsipract}

\begin{figure}	
	\centering
	\includegraphics[width=0.35\linewidth]{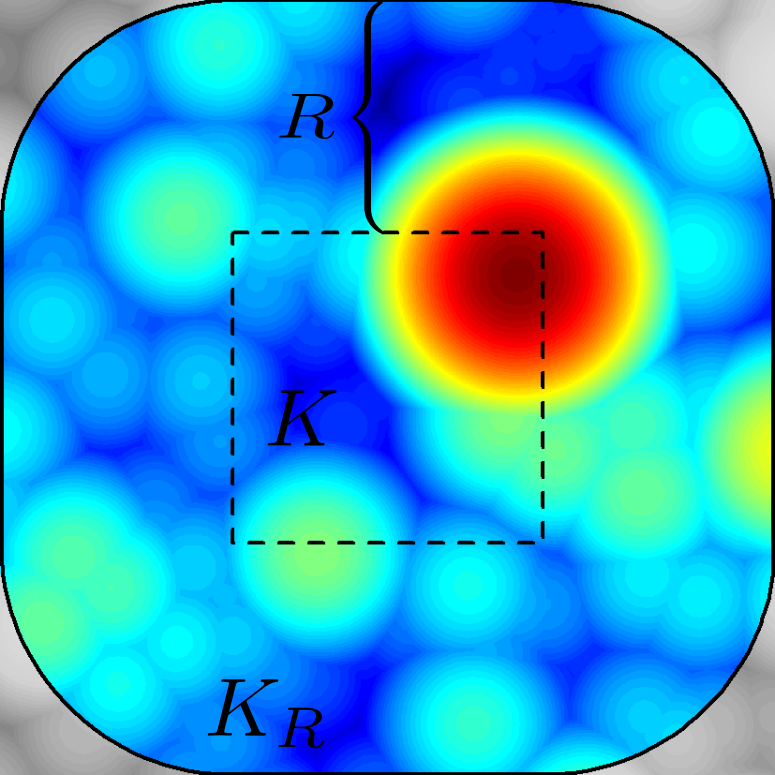}
	\caption{ Domain of observation $\mathcal{D}$ (big square) and domain of estimation $K_R$. The realization $y$ of $Y$ must be observed on $K=\mathcal{D}\ominus B_R(o)$, the process $N_K^y$ takes value on $K_R$ and its realization can be estimated by  a realization of the contributing points $N_K^Y|Y=y$.}
	\label{fig:Destcond}	
\end{figure}
One difficulty in practice is that we will mostly observe only one realization $y$ of $Y$ on some compact set $\mathcal{D}\subset\R^d$, i.e. $n=1$ in the notation of Section~\ref{sec:estpsitheo}. 
In order to obtain an estimator which is  reasonable in this setting, we slightly modify the previous approach. As before, we keep the strategy that we discuss first an estimator for $\psi^y_K=b_K^y\psi$ and then divide this estimator by $b_K^y$ to obtain the estimator for the realization $\psi$ of $\Psi$.\\
Let $K$ be the closing $K:=\mathcal{D}\ominus B_R(o)$, see Figure \ref{fig:Destcond}. The centres of the contributing storms of $y$ on $K_R$ are the points of the point process $N_K^Y|_{Y=y}$,  see Figure \ref{fig:contstorms} and \ref{fig:contstormscenterv2}. We henceforth use $N_K^Y|_{Y=y}$ as an estimate of $N_K^y=N_K^{Y^{\ast}}|_{Y^{\ast}=y}$, assuming that $N_K^Y|_{Y=y}$ approximates $N_K^y$ sufficiently well in practice - see Section~\ref{sec:discussion} for a discussion of this assumption. 		
Since $n$ equals one, we can rewrite the restriction of $c_K^y\widehat{{\psi}}_{K,n}^{y}(s)$ to any compact subset $D\subset K_R$ as
\begin{align*}
	c_K^y\widehat{{\psi}}_{K,n}^{y}(s)=h^{-d}\frac{\int_D\psi_K^y(s)~\d s}{ N_K^y(D)}\sum_{t\in N_K^y\cap D} k\left(\frac{s-t}{h}\right),\quad s\in D.
\end{align*}
The integral $\int_D\psi_K^y(s)~\d s$ is \emph{a priori} unknown and since $\E\int_D\psi_K^y(s)~\d s=\E N_K^y(D)$ we omit the whole fraction. 	
To compensate edge effects we additionally include weights $c_{D}(t)=h^{-d}\int_{D} k\left(\frac{s-t}{h}\right)\d s$ as proposed in \cite{ripley77}
and thereby obtain the following kernel estimator \citep{diggle1985kernel}		
\begin{equation}\label{est:rhohtilde}
	\widehat{{\psi}}^{y}_D(s)=h^{-d}\sum_{t\in N_K^y\cap D} c_{D}(t)^{-1} k\left(\frac{s-t}{h}\right),\quad s\in D,\quad D\subset K_R,
\end{equation}
with bandwidth $h$ and the Epanechnikov kernel	
\begin{equation*}
	k(s)=\frac{d+2}{2{c}_d}(1-\|s\|^2)\1_{B_1(o)}(s),\quad {c}_d=|B_1^d(o)|.
\end{equation*}	
The impact of the bandwidth $h$ is strong and several approaches of figuring out a reasonable bandwidth can be found in \cite{diggle1985kernel} and \cite{stoyanfraktale1992}.

\begin{Lemma}\label{lemma:unbiased}
	The estimator $\int_D\widehat{{\psi}}^{y}_D(s)~\d s$ is unbiased for 	$\int_D{\psi}^{y}(s)~\d s$, that is
	\begin{align*}
		\E\int_{D} \widehat{{\psi}}^{y}_D(s)~\d s=\E\int_{D} 	{\psi}^{y}(s)~\d s\quad \forall h\in\R_{+},\quad \forall D\subset K_R.
	\end{align*}	
\end{Lemma}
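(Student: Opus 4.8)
The plan is to reduce the statement to the elementary fact that integrating the estimator \eqref{est:rhohtilde} over $D$ returns exactly the number of observed points, and then to invoke Lemma~\ref{lemma:contpointsint}. The edge-correction weights $c_D(t)=h^{-d}\int_D k((s-t)/h)\,\d s$ are tailored precisely for this cancellation. First I would integrate \eqref{est:rhohtilde} over $D$ and interchange the sum with the integral; this is legitimate (indeed just linearity) because $N_K^y$ is almost surely a finite point process under \eqref{cond:Xbounded} and \eqref{cond:Xsupport}, as noted after \eqref{eq:contpoints}. This gives
$$\int_D \widehat{\psi}^{y}_D(s)\,\d s = h^{-d}\sum_{t\in N_K^y\cap D} c_D(t)^{-1}\int_D k\!\left(\frac{s-t}{h}\right)\d s .$$
Recognising the inner integral as $h^d c_D(t)$ by the definition of the weight, every summand collapses to $1$, so the right-hand side equals the point count $N_K^y(D)$. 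Thus I obtain the pathwise identity $\int_D \widehat{\psi}^{y}_D(s)\,\d s = N_K^y(D)$, valid for every realization and every $h>0$.

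It then remains to take expectations on both sides. Conditioning on $\Psi=\psi$ and $Y^{\ast}=y$, Lemma~\ref{lemma:contpointsint} identifies $N_K^y$ as a Poisson point process with intensity function $\psi^y_K=\psi^y$, so the defining mean formula for Poisson processes gives $\E\bigl[N_K^y(D)\mid \Psi=\psi, Y^{\ast}=y\bigr]=\int_D \psi^y(s)\,\d s$. Taking the outer expectation over $\Psi$ and $Y^{\ast}$ and combining with the pathwise identity yields
$$\E\int_D \widehat{\psi}^{y}_D(s)\,\d s = \E\,N_K^y(D) = \E\int_D \psi^y(s)\,\d s ,$$
which is exactly the assertion; independence of the result from $h$ is manifest from the computation.

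I do not anticipate a genuine obstacle: the lemma merely certifies that the Ripley-type edge correction conserves total mass. The two points deserving a line of care are the interchange of sum and integral — secured by the almost sure finiteness of $N_K^y$, or alternatively by Tonelli since the kernel is nonnegative — and the finiteness of the expectations, which prevents the identity from being a vacuous $\infty=\infty$. The latter holds because $b_K^y$ is bounded under \eqref{cond:Xbounded}, whence $\int_D \psi^y(s)\,\d s \le \|b_K^y\|_\infty \int_{K_R}\psi(s)\,\d s$ and $\E\int_{K_R}\psi(s)\,\d s = c_\Psi\,|K_R|<\infty$ by stationarity of $\Psi$ and \eqref{eq:int-cond-rho}.
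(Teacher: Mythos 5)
Your proof is correct and follows essentially the same route as the paper: integrate the estimator over $D$ so that the edge-correction weights $c_D(t)$ cancel and the integral collapses to the point count $N_K^y(D)$, then identify $\E N_K^y(D)$ with $\E\int_D \psi_K^y(s)\,\d s$ via the Poisson distribution of $N_K^y(D)\vert_{\Psi=\psi}$ from Lemma~\ref{lemma:contpointsint}. Your additional remarks on the sum--integral interchange and on finiteness of the expectations are sound refinements of steps the paper leaves implicit, but they do not change the argument.
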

Finally, we correct $\widehat{{\psi}}^{y}_D$ as done in Corollary \ref{cor:uniformconvergence} in the previous section and use		
\begin{equation}\label{est:rhoh}
	\widehat{\psi}_D(s)=b^{y}_K(s)^{-1}\widehat{{\psi}}^{y}_D(s)
\end{equation}
to estimate $\psi$ - see Figure \ref{fig:nonparest} for illustration. 
\begin{figure}
	\centering
	\begin{minipage}[b]{5.3cm}
		\includegraphics[width=1.00\textwidth]{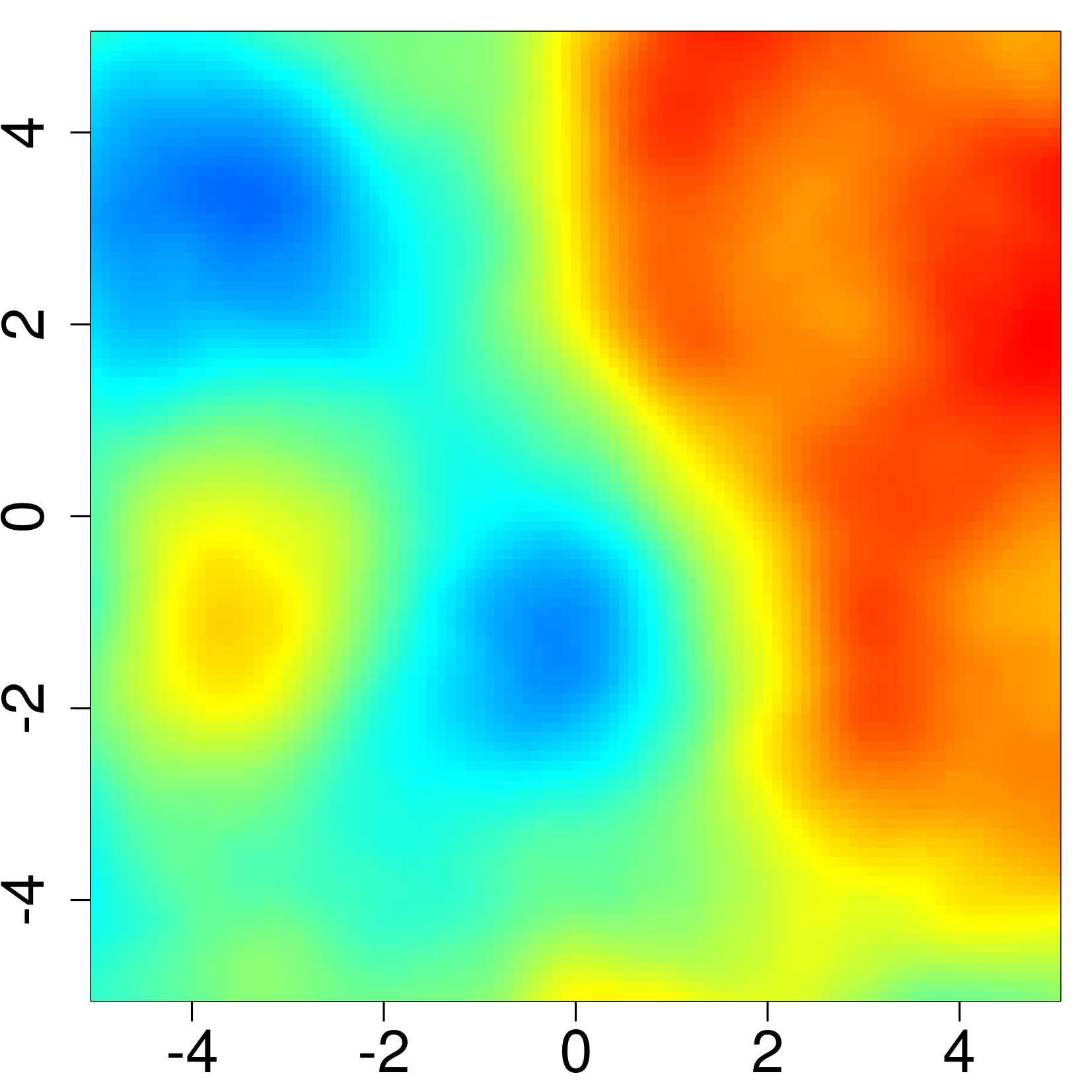}
		\includegraphics[width=1.00\textwidth]{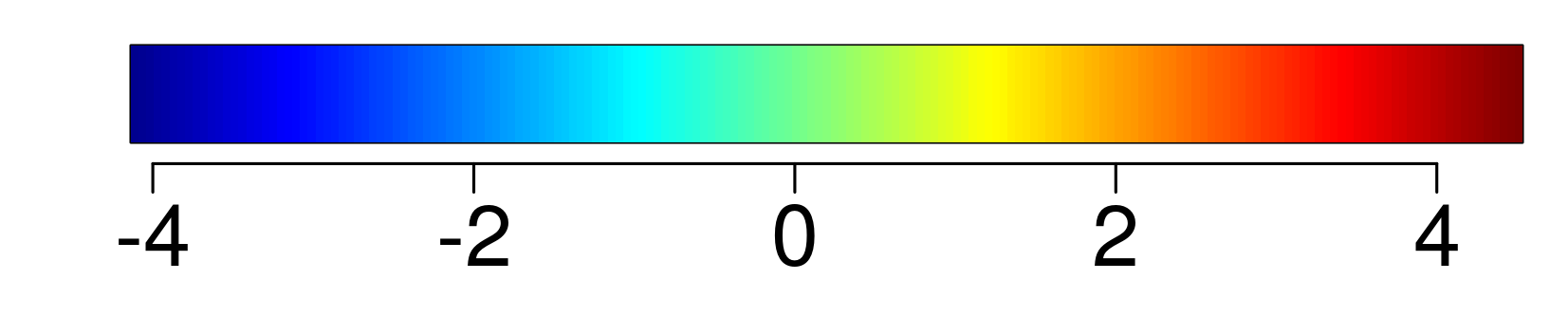}
	\end{minipage}
	\begin{minipage}[b]{5.3cm}
		\includegraphics[width=1.00\textwidth]{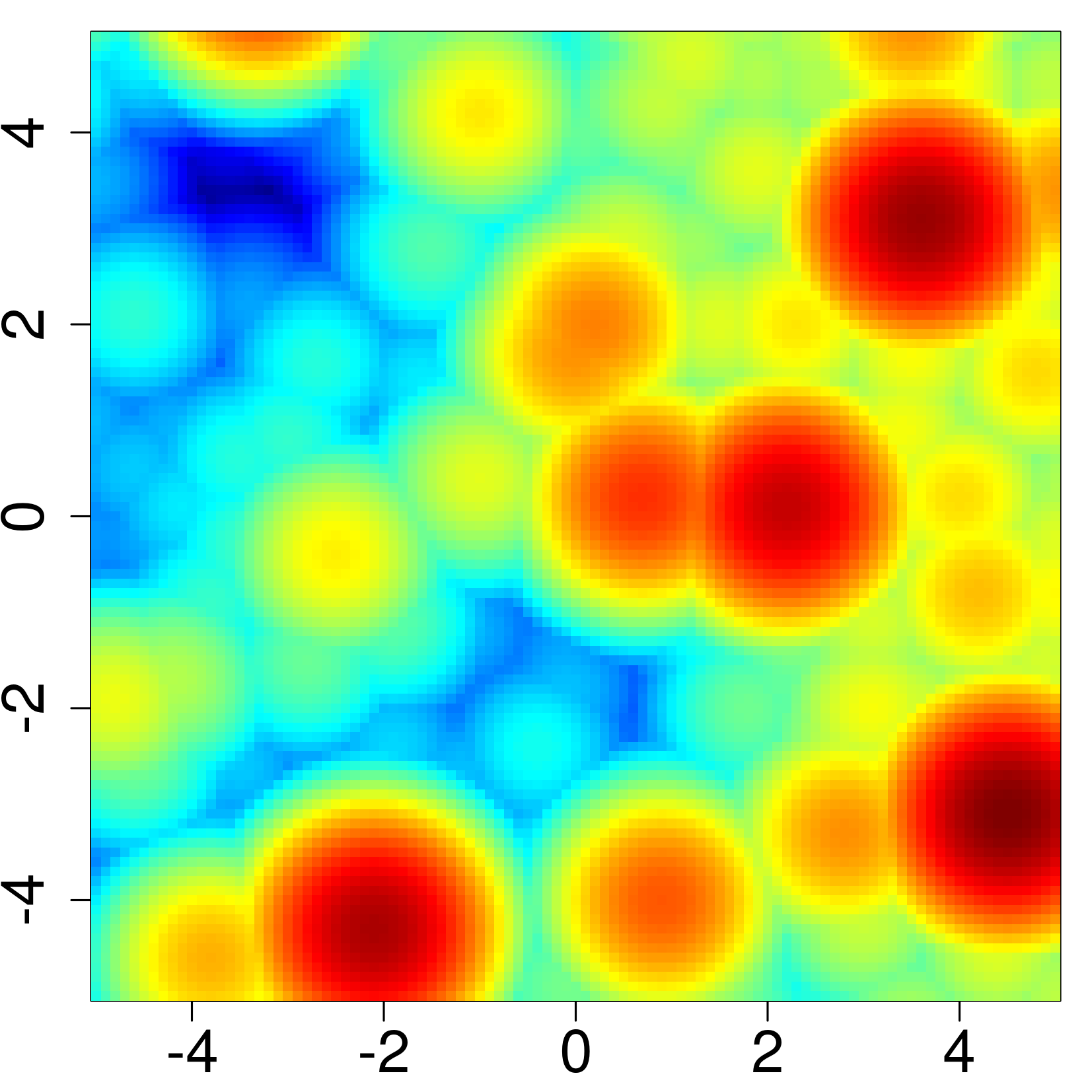}
		\includegraphics[width=1.00\textwidth]{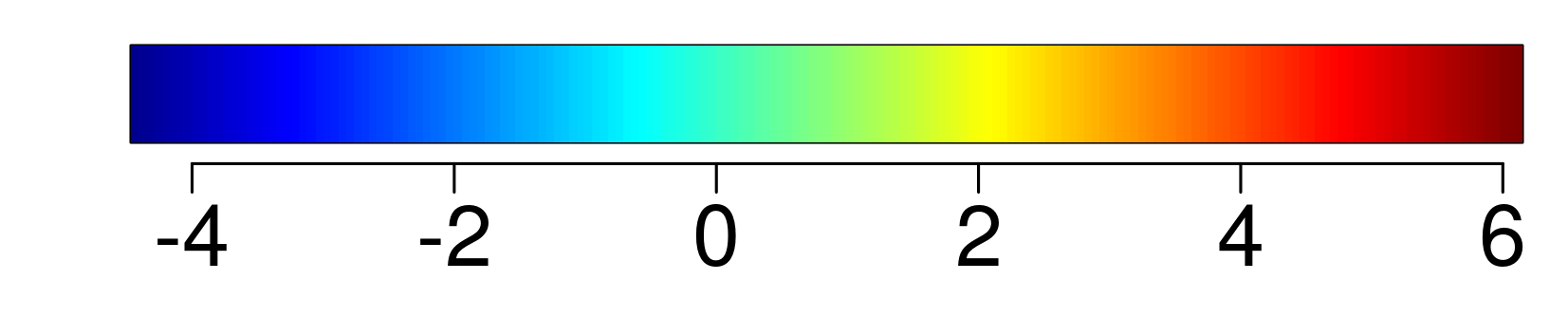}
	\end{minipage}
	\begin{minipage}[b]{5.3cm}
		\includegraphics[width=1.00\textwidth]{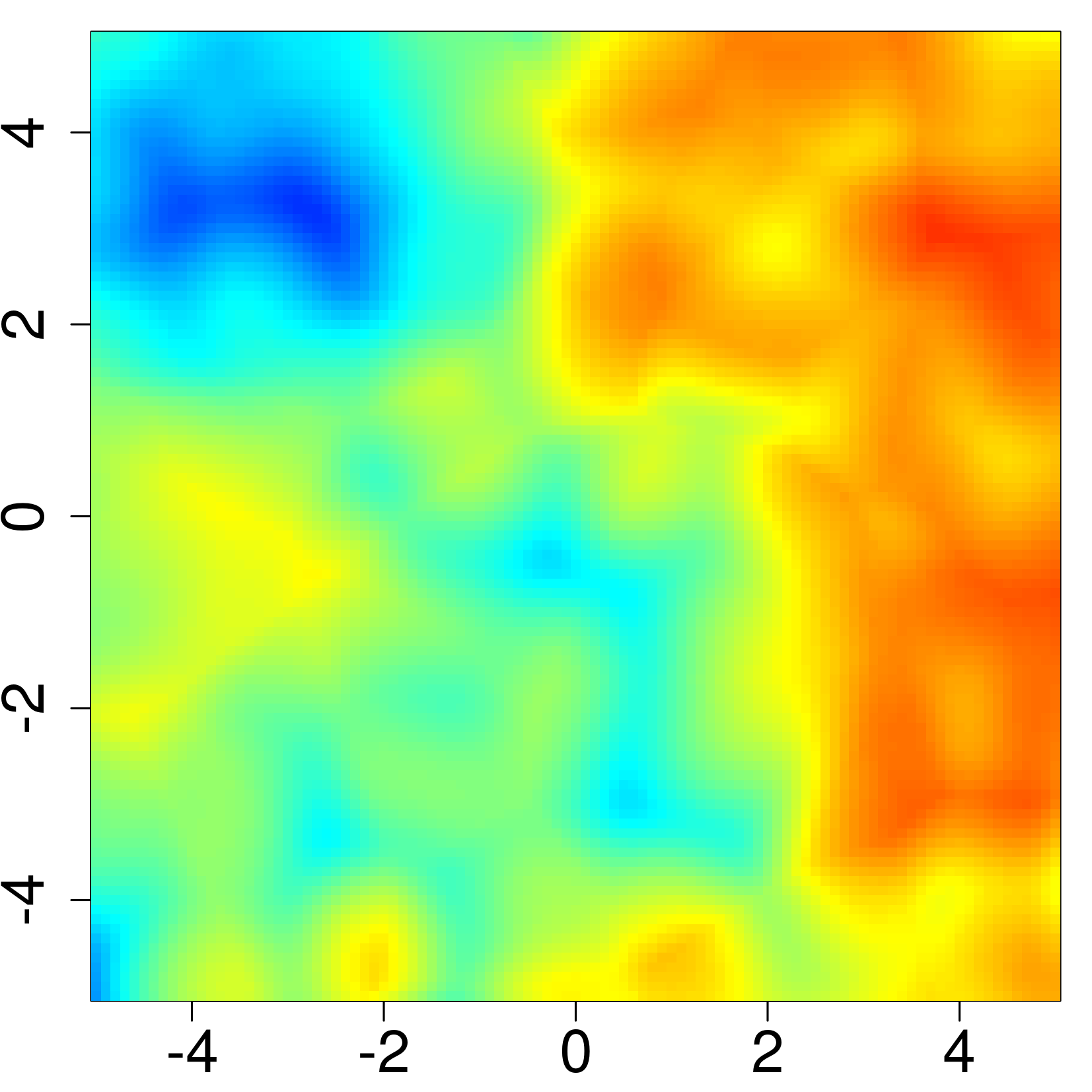}
		\includegraphics[width=1.00\textwidth]{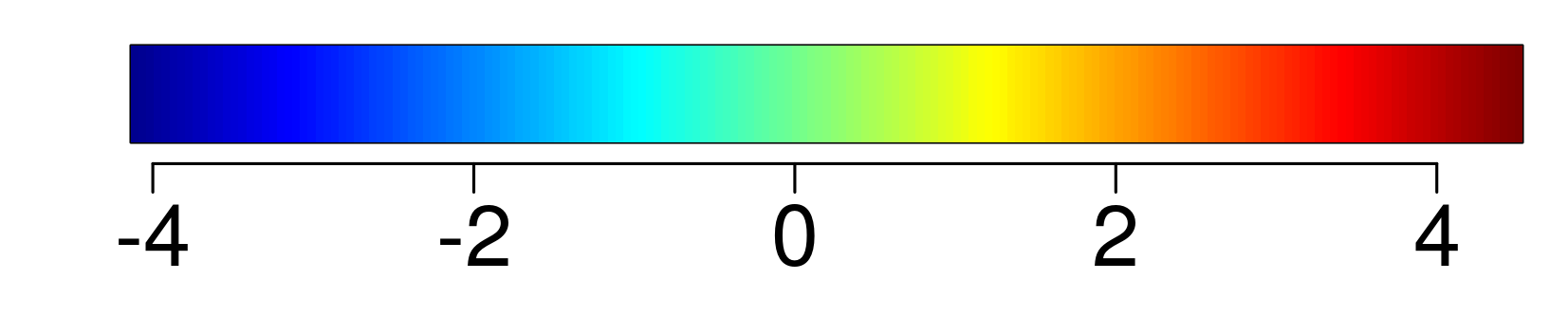}
	\end{minipage}
	\caption{True realization $\psi$ of $\Psi$ (left), associated Cox extremal process $Y$ (centre) and estimated intensity $\hat{\psi}$ (right). The intensity process $\Psi$ is log Gaussian with Matérn covariance function with parameters $\nu=2,\textrm{scale}=3,\textrm{var}=2$.}
	\label{fig:nonparest}
\end{figure}

\begin{Remark}
	The integral of the estimator $\widehat{{\psi}}^{y}_{K_R}$ is unbiased for the integral of ${\psi^y_K}$. That said, $\psi^y_K$ and $\widehat{{\psi}}^{y}_{K_R}$ are rather small near the boundary $\partial K_R$ of $K_R$. Condition \eqref{eq:Xmonotone} implies that $b^{y}_K(s)$ is also small for $s$ close to $\partial K_R$. Since ${\widehat{\psi}_{K_R}}$ is defined as ${\widehat{\psi}}_{K_R}=\widehat{{\psi}}^y_{K_R}/b^{y}_K$, the estimates are highly unstable at these areas. The severeness of this effect depends mainly on the shape function $X$ and can a priori be avoided by restricting ${\widehat{\psi}_D}$ to $D=K$ or using a smaller radius $\tilde{R}<R$ instead of the exact $R$, such that $\E\inf_{s\in B_{\tilde{R}}(o)}X(s)>\alpha$ for some sufficiently large $\alpha>0$.
\end{Remark}

\newpage
\section{Parametric estimation of the covariance function of the intensity process $\Psi$}\label{sec:estPsipara}

As described in Section~\ref{sec:model}, we model the intensity process $\Psi$ that underlies our Cox extremal process $Y$ by a log Gaussian Cox process $\Psi=\exp(W)$. Let $\sigma^2C_{\beta}$ be the covariance function of the Gaussian random field $W$ with correlation function $C_{\beta}$ and unknown parameters $\sigma^2>0$ and $\beta\in\R^p$. In the sequel we derive an estimation procedure for these unknown parameters from observations of the Cox extremal process $Y$. 

First, we modify the process of contributing storm centres such that the resulting point process behaves like the original Cox process $N_0\sim CP(\Psi(s)\d s)$. In a second step, the minimum contrast method \citep{moeller1998lgcp} is applied to these samples of $N_0$ to estimate the unknown parameters. To simplify notation, we will write $CP(\Psi)$ instead of $CP(\Psi(s)\d s)$ throughout this section and make likewise amendments for other intensity processes.\\

%

\textbf{Modification of the point process $N_K^y$.}		
\begin{figure}
	\centering
	\begin{minipage}[b]{5.3cm}
		\includegraphics[width=1.00\textwidth]{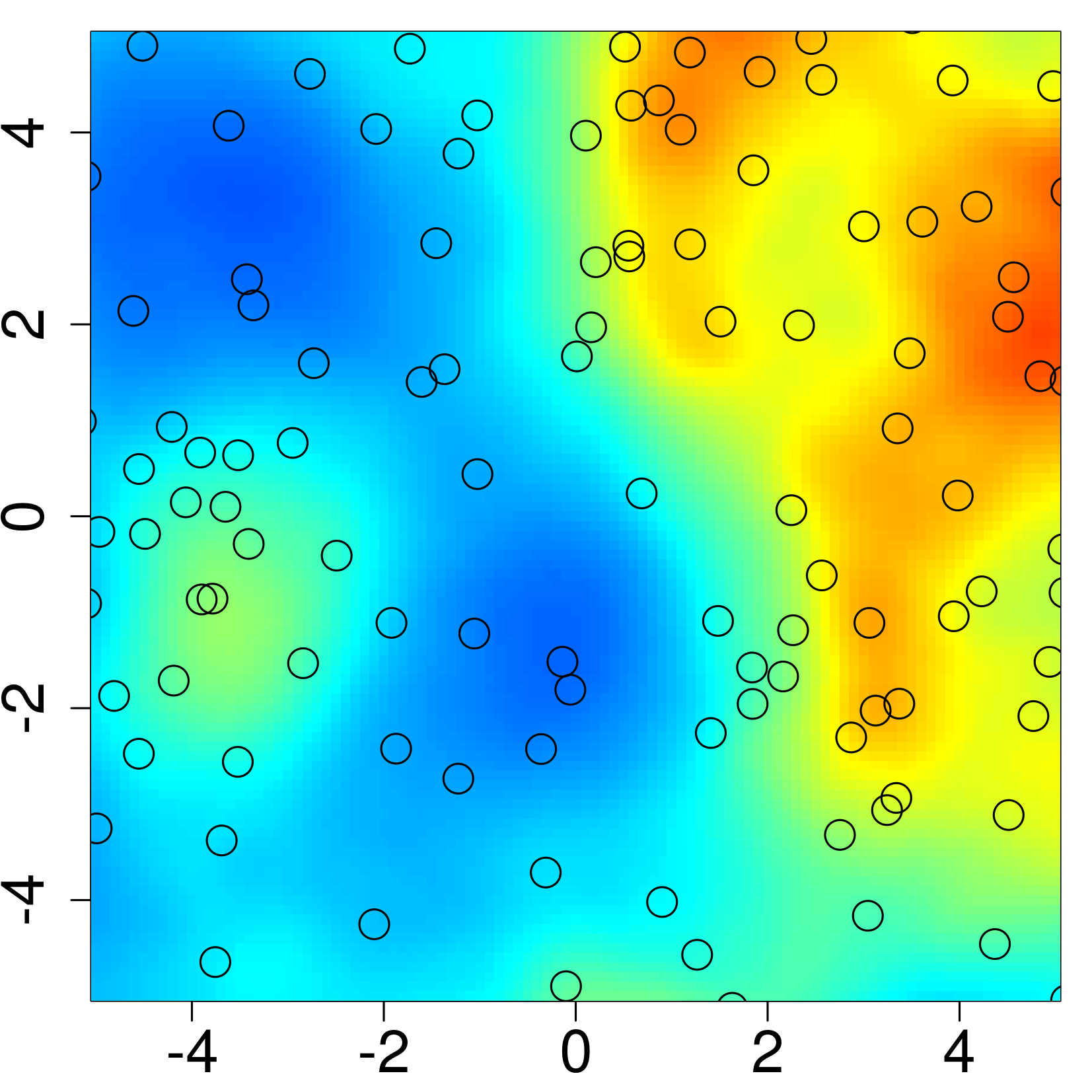}
		\includegraphics[width=1.00\textwidth]{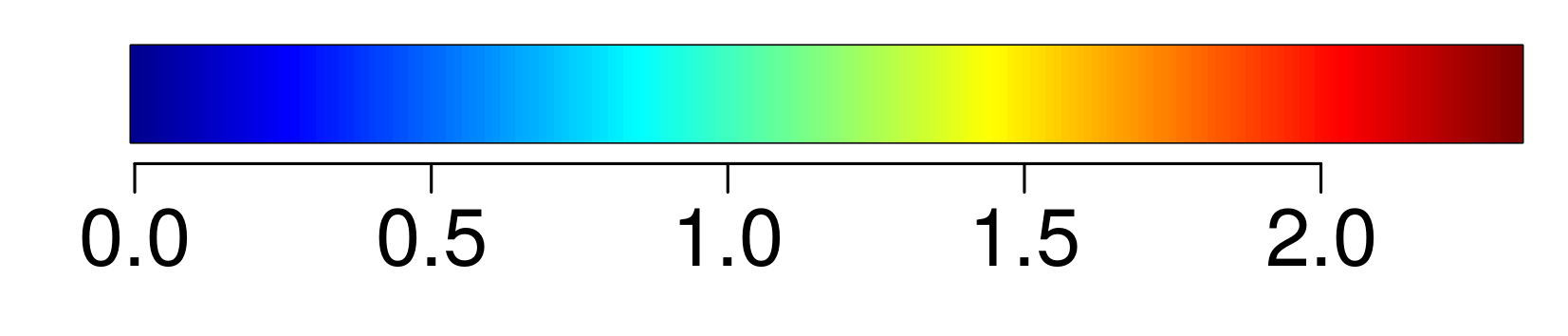}
	\end{minipage}
	\begin{minipage}[b]{5.3cm}
		\includegraphics[width=1.00\textwidth]{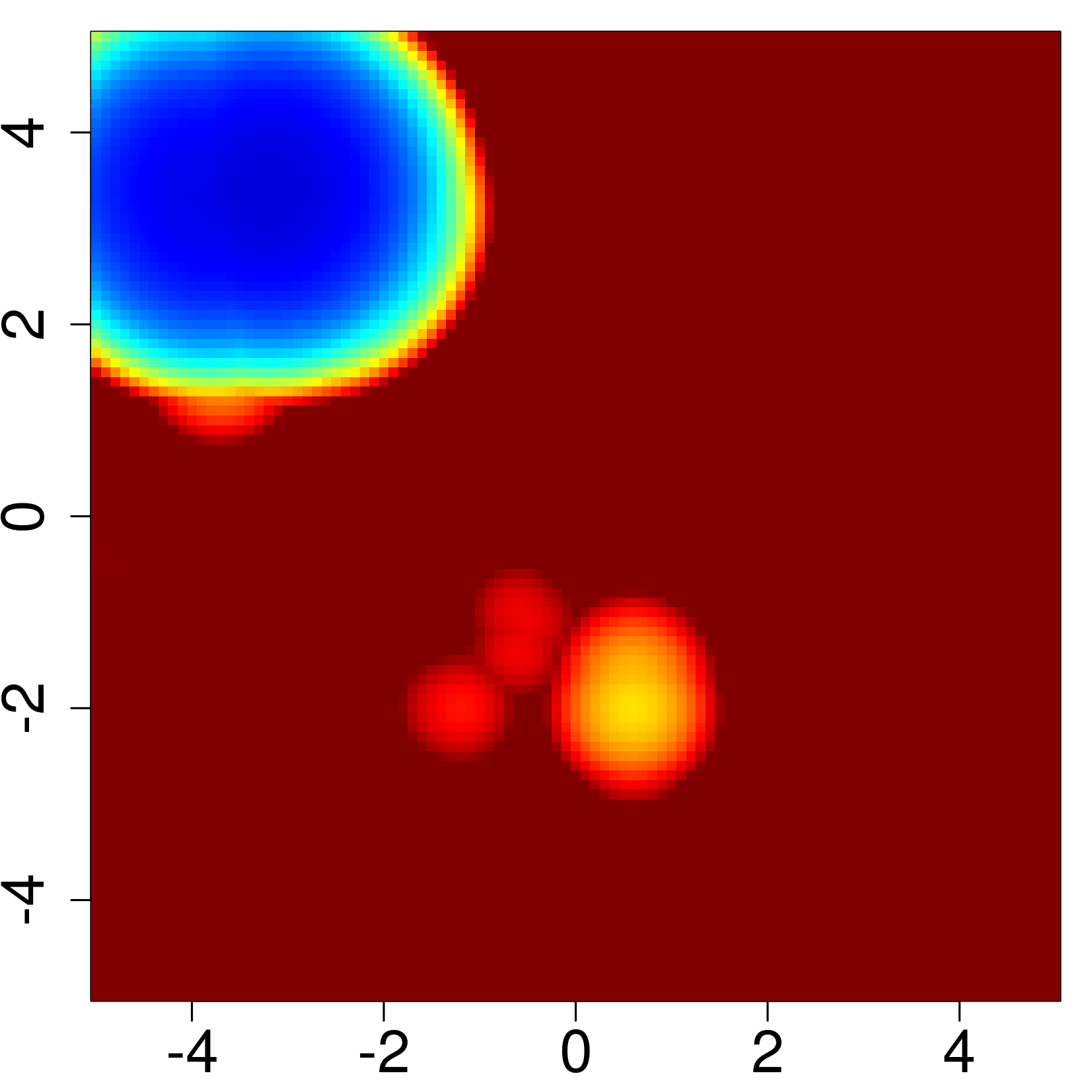}
		\includegraphics[width=1.00\textwidth]{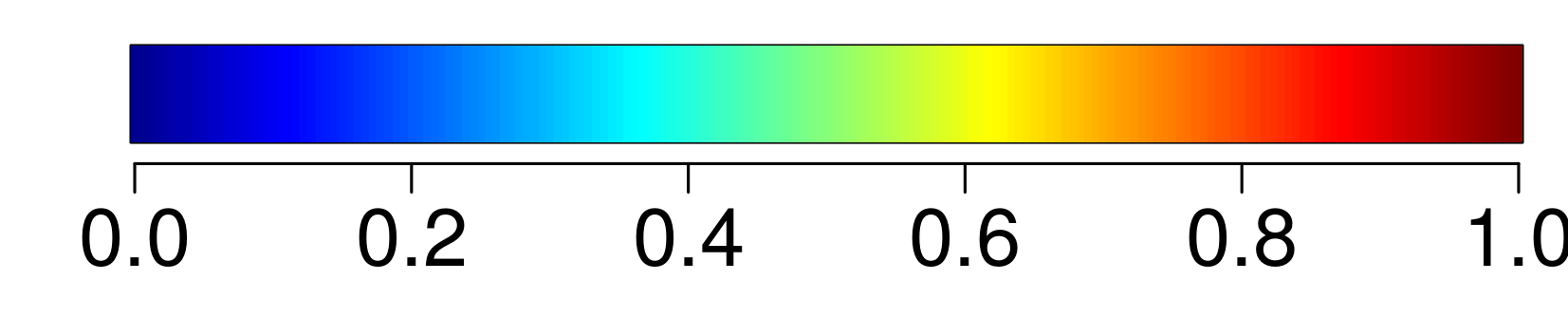}
	\end{minipage}
	\begin{minipage}[b]{5.3cm}
		\includegraphics[width=1.00\textwidth]{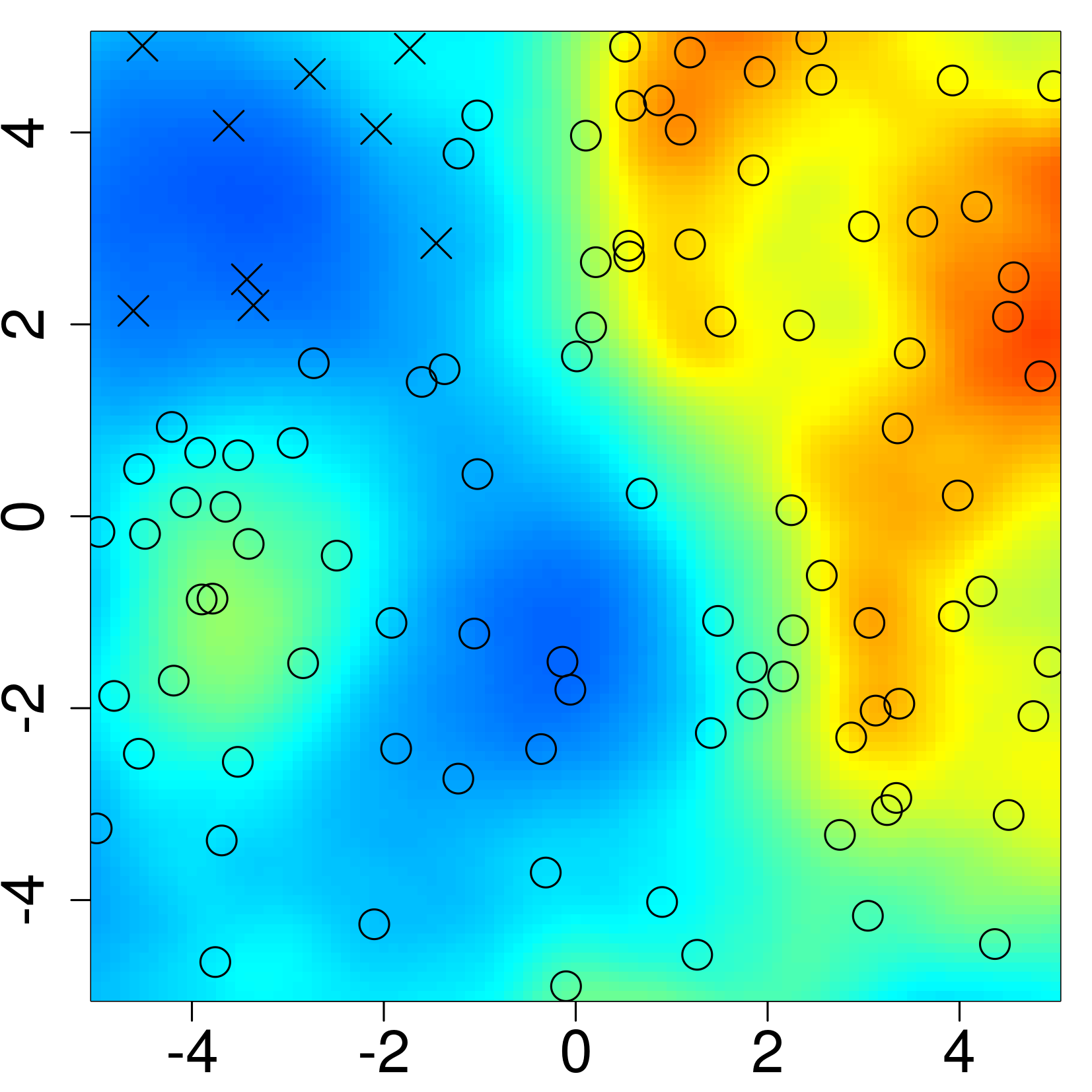}
		\includegraphics[width=1.00\textwidth]{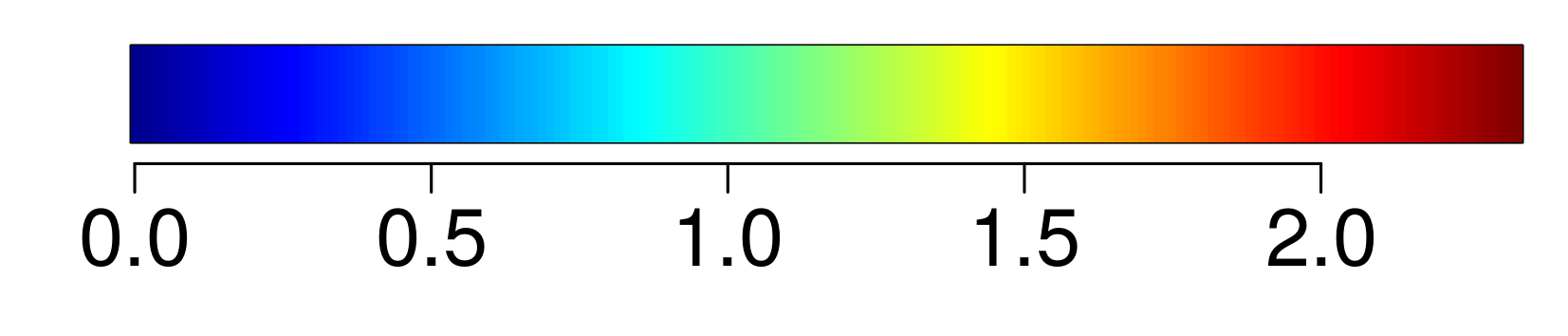}
	\end{minipage}
	\caption{Realization $\psi$ of $\Psi$ and the original sample of $N_K^y$ (left). The retaining probabilities $p$ are plotted in the middle. Thinned sample of $N_K^y$ (circles), the deleted points are marked with crosses (right).}\label{fig:enhsample1}
\end{figure}			
\begin{figure}
	\centering
	\begin{minipage}[b]{5.3cm}
		\includegraphics[width=1.00\textwidth]{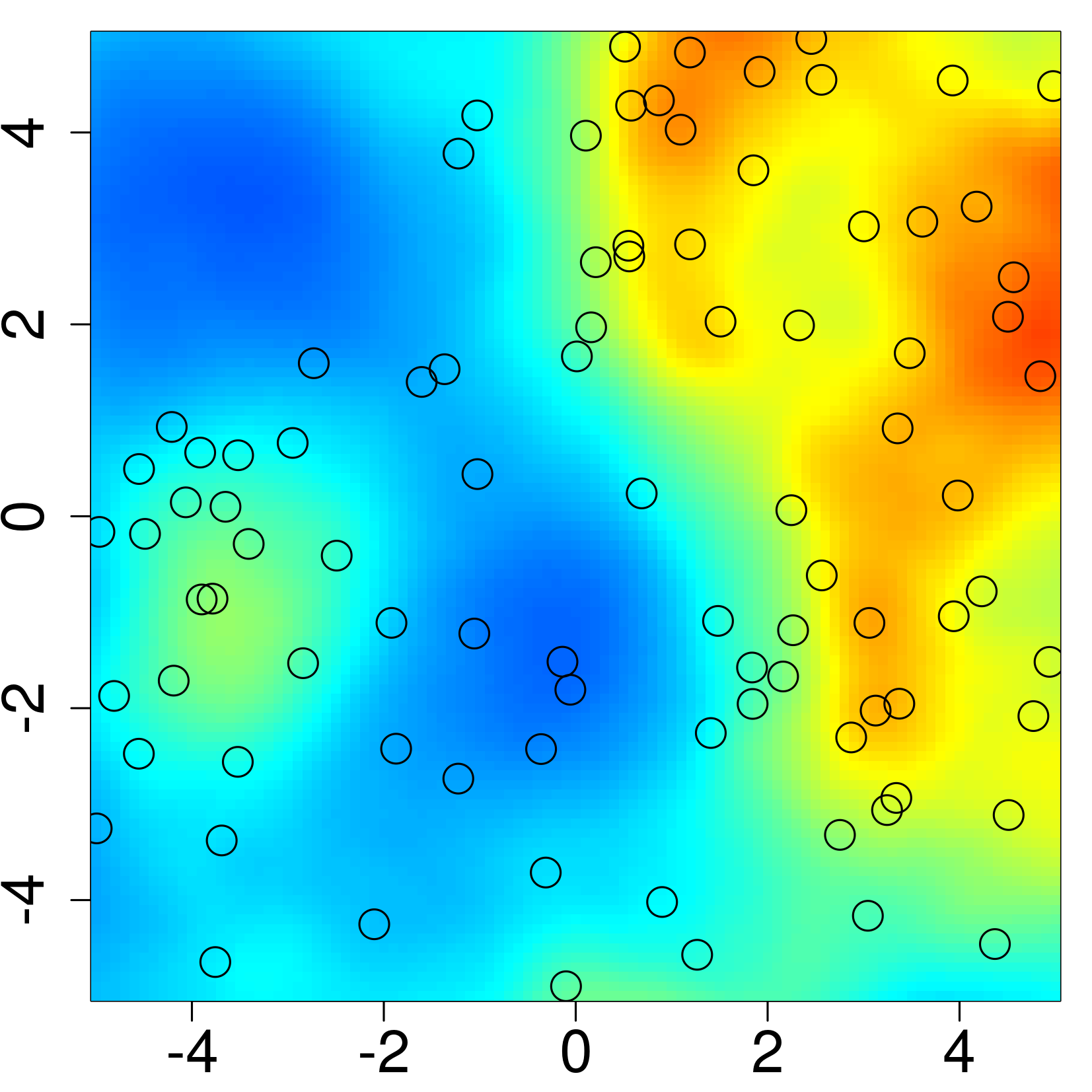}
		\includegraphics[width=1.00\textwidth]{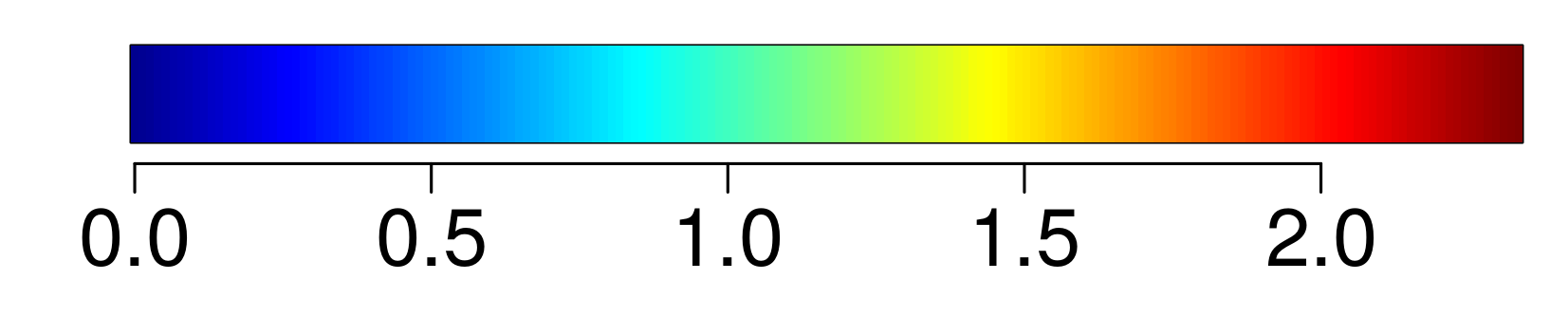}
	\end{minipage}
	\begin{minipage}[b]{5.3cm}
		\includegraphics[width=1.00\textwidth]{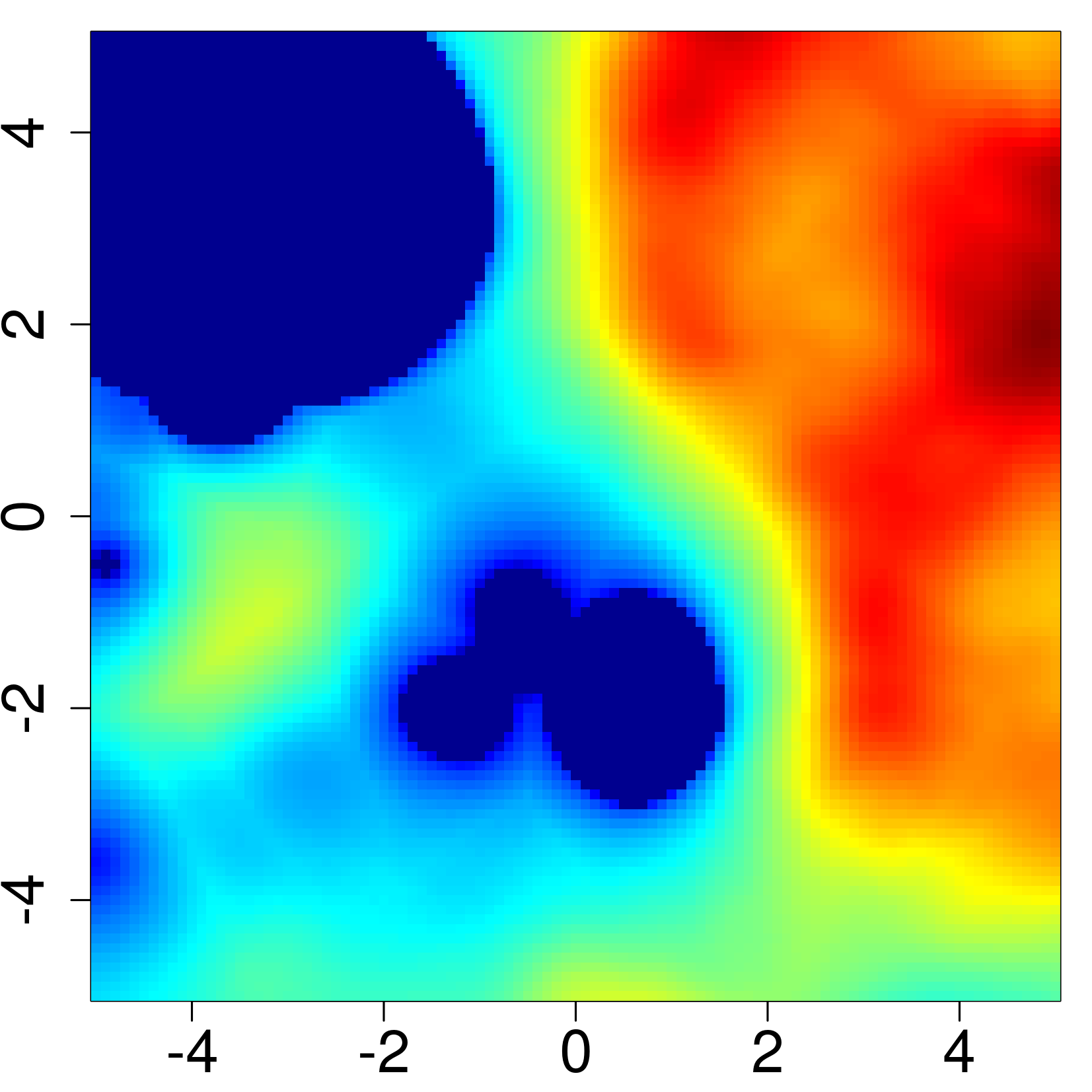}
		\includegraphics[width=1.00\textwidth]{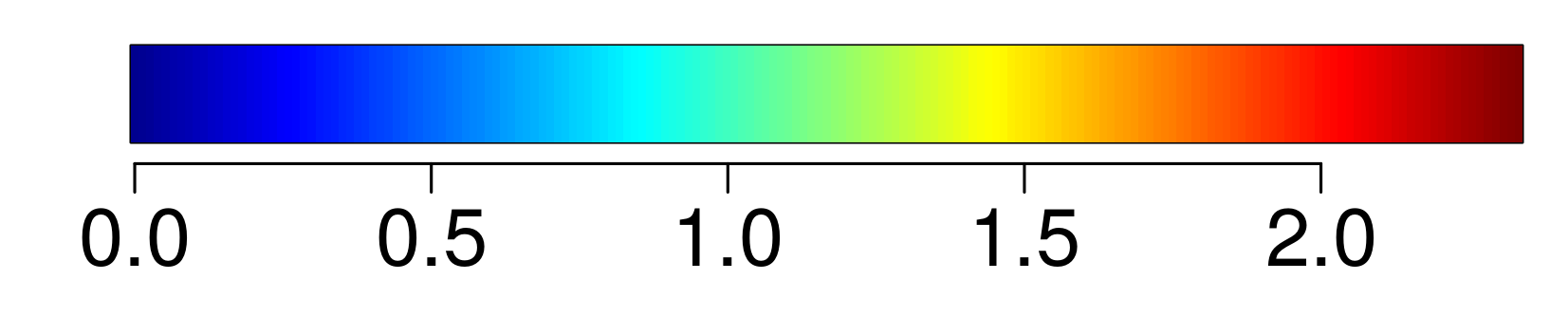}
	\end{minipage}
	\begin{minipage}[b]{5.3cm}
		\includegraphics[width=1.00\textwidth]{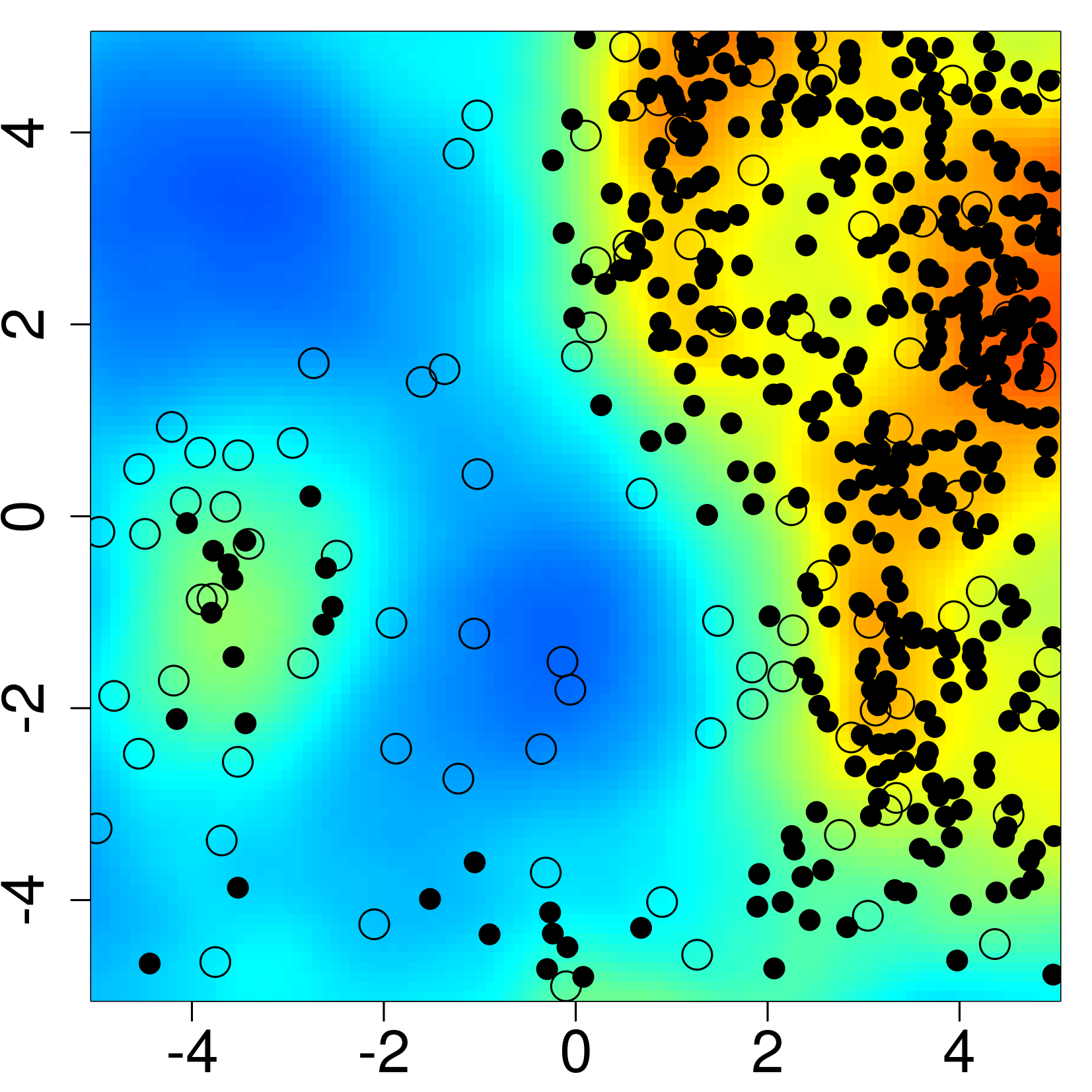}
		\includegraphics[width=1.00\textwidth]{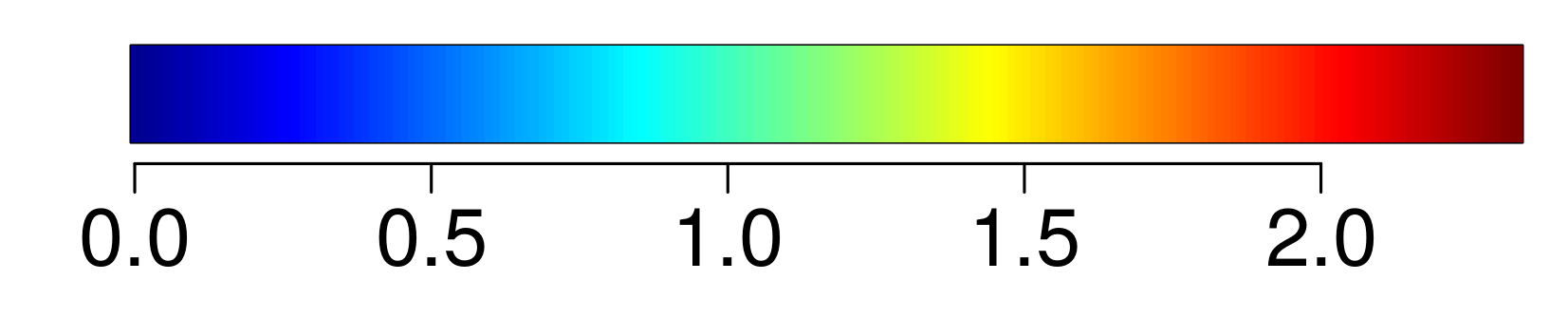}
	\end{minipage}
	\caption{Realization $\psi$ of $\Psi$ and the thinned sample $p\cdot N_K^y$ (left). Additional points are simulated with intensity function $(1-b_K^y)_{+}\Psi$ (middle). Superposition of $p\cdot N_K^y$ with the additional points (filled circles) is plotted in the right.}\label{fig:enhsample2}
\end{figure}			
As a consequence of Lemma \ref{lemma:contpointsint}, the point process $N_K^y$ that we obtain from the contributing storm centres (see Section~\ref{sec:estpsipract}) is a Cox process with intensity function $b^{y}_K\Psi$. Compared to the original point process $N_0\sim CP(\Psi)$ on which the Cox extremal process $Y$ is based, it is very likely that $N_K^y$ possesses more points in the region $\{b_K^y\geq 1\}$ and fewer points in the region $\{b_K^y<1\}$. 

To adjust for this discrepancy we delete some points of $N_K^y$ when $b_K^y>1$ and add points to $N_K^y$ when $b_K^y<1$. The first adjustment on $\{b_K^y\geq 1\}$ is done by means of thinning. If $p(\cdot)$ is a measurable function on $\R^d$ with $p(s)\in [0,1]$, then $p\cdot N_K^y$ is the point process obtained from $N_K^y$ by independent thinning according to $p(\cdot)$. That is, every point of $N_K^y$ is independently deleted with probability $1-p(\cdot)$ (see \citep{daley2008pp} Chapter 11.3 for details), see also Figure \ref{fig:enhsample1} for a plot of a sample of the original $N_K^y$, the thinning probabilities and the thinned sample $N_K^y$. We choose $p=1/b_K^y$ such that the random intensity function of the thinned process equals $\Psi$ on $\{b_K^y\geq 1\}$.
The second adjustment, adding points on $\{b_K^y< 1\}$, is achieved by simulating additional points in such way that the sum of intensity functions equals $\Psi$ on $\{b_K^Y<1\}$, see also Figure \ref{fig:enhsample2}. The following lemma summarizes and justifies this procedure.
\newpage
\begin{Lemma}\label{lemma: postprocessing}
	Let $CP(f\Psi)$ be a finite Cox process on $\R^d$ and $p=f^{-1}\cdot\1_{\{f\geq 1\}}+\1_{\{f<1\}}$. Then, $p$ is a measurable function on $\R^d$ with $p(s)\in [0,1]$ for all $s\in\R^d$ and
	\begin{align}\label{eq:CPcomp}
		\notag &p\cdot CP(f\Psi)+CP((1-f)_+\Psi)\\
		&=\underbrace{p\cdot CP(f\Psi)\vert_{\{f\geq 1\}}}_{\substack{\text{p-thinning of original } CP(f\Psi)\\ \text{on } \{f\geq 1\}}}+ \ \underbrace{CP(f\Psi)\vert_{\{f< 1\}}}_{\substack{\text{original } CP(f\Psi)\\ \text{on } \{f< 1\}}}+\ \underbrace{CP((1-f)\Psi)\vert_{\{f< 1\}}}_{\substack{\text{additional points } \\ \text{on } \{f<1\}}}\sim CP(\Psi).
	\end{align}
	That is, the left-hand side is distributed like a Cox process with intensity process $\Psi$.
\end{Lemma}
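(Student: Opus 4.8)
The plan is to condition on the random intensity $\Psi=\psi$, which turns each object appearing in \eqref{eq:CPcomp} into an ordinary Poisson process on $\R^d$, to apply the classical operations of restriction, thinning and superposition for Poisson processes, and then to un-condition. Finiteness of $CP(f\Psi)$ guarantees that all these manipulations are unproblematic. The auxiliary claim about $p$ is immediate: on $\{f\geq 1\}$ one has $p=1/f\in(0,1]$, on $\{f<1\}$ one has $p=1$, and since $f$ and the level sets $\{f\geq 1\}$ and $\{f<1\}$ are measurable, $p$ is a measurable $[0,1]$-valued function.

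Conditionally on $\Psi=\psi$ I would then use three standard facts. (i) \emph{Restriction}: $CP(f\Psi)\vert_{\Psi=\psi}\sim PP(f\psi)$, and its restriction to a measurable set $A$ is $PP(f\psi\1_A)$, with restrictions to disjoint sets independent. (ii) \emph{Thinning}: an independent $p$-thinning of $PP(f\psi)$ is $PP(pf\psi)$, by the Poisson thinning theorem available through \citep[Ch.~11.3]{daley2008pp}. (iii) \emph{Superposition}: independent $PP(\lambda_1)$ and $PP(\lambda_2)$ combine to $PP(\lambda_1+\lambda_2)$. The displayed decomposition in \eqref{eq:CPcomp} is then a pure rewriting: since $p=1$ on $\{f<1\}$, the thinned process $p\cdot CP(f\Psi)$ agrees with the unthinned $CP(f\Psi)$ there, while $(1-f)_+$ vanishes on $\{f\geq 1\}$, so the additional process $CP((1-f)_+\Psi)$ places no points on $\{f\geq 1\}$; splitting $p\cdot CP(f\Psi)$ across the two level sets yields exactly the three labelled terms.

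It then remains to compute the conditional intensity of the left-hand side of \eqref{eq:CPcomp} given $\Psi=\psi$. On $\{f\geq 1\}$ only the thinned term survives, with intensity $pf\psi=\psi$; on $\{f<1\}$ the original term contributes $f\psi$ and the additional term contributes $(1-f)\psi$, which sum to $\psi$. Hence, conditionally on $\Psi=\psi$, the superposition is $PP(\psi)$ for $\P_\Psi$-almost every $\psi$, and by the very definition of a Cox process this means that, after un-conditioning, the superposition is distributed as $CP(\Psi)$.

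The main obstacle is not the intensity bookkeeping but the conditional independence needed to justify step (iii): the thinning must use randomness independent of $\Psi$, and the additional Poisson layer $CP((1-f)_+\Psi)$ must be, conditionally on $\Psi$, independent of the thinned contributing-centre process. I would make this precise by realising the thinning through an i.i.d.\ family of uniform marks attached to the points of $CP(f\Psi)$ and the additional points through an independent Poisson randomisation driven by the same $\Psi$. Once this joint construction is fixed, the superposition identity holds conditionally on $\Psi=\psi$, and the remaining steps reduce to the routine intensity computation above.
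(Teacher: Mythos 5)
Your proposal is correct and follows essentially the same route as the paper's proof: the identical decomposition across the level sets $\{f\geq 1\}$ and $\{f<1\}$, the thinning identity $p\cdot CP(f\Psi)\vert_{\{f\geq 1\}}=CP(\Psi)\vert_{\{f\geq 1\}}$ (since $pf=1$ there), and the superposition $CP(f\Psi)\vert_{\{f<1\}}+CP((1-f)\Psi)\vert_{\{f<1\}}=CP(\Psi)\vert_{\{f<1\}}$. The only difference is presentational: you condition on $\Psi=\psi$ and argue at the Poisson level, which is precisely how the Cox-level thinning and superposition facts the paper invokes are established, and your explicit remark that the thinning randomisation and the additional layer must be conditionally independent given $\Psi$ makes visible an assumption the paper's proof leaves implicit.
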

In our situation we apply the lemma to $N_K^y$ by choosing $f=b_K^y$ and restricting the resulting process to $K$. That is,
$$(p\cdot N_K^y+CP((1-b_K^y)_+\Psi))\vert_K\sim CP(\Psi)\vert_K=:\Phi_K.$$

The first two components considered in \eqref{eq:CPcomp} form the thinned point process $p\cdot N_K^y$. To add the additional points on $\{b_K^y< 1\}$ we rely on our estimate of $\psi$ from Section~\ref{sec:estpsipract}.\\


\textbf{Minimum contrast method.}		
The so-called pair correlation function \citep{stoyanfraktale1992} of a Cox process on $K\subset \R^d$ with random intensity function $\Psi=\exp(W)$ is given by
\begin{align*}
	g(s_1,s_2)=\frac{\E\left[\Psi(s_1)\Psi(s_2)\right]}{\E\Psi(s_1)\E\Psi(s_2)}, \quad s_1,s_2 \in K.
\end{align*}

A remarkable property of a log Gaussian Cox process is that its distribution is fully characterized by its first and second order product density. We refer to Theorem 1 in \cite{moeller1998lgcp}, which also covers the following lemma.

\newpage

\begin{Lemma}[Stationarity and second order properties]\label{lemma:CPsecondorder}
	A log Gaussian Cox process is stationary if and only if the corresponding Gaussian random field is stationary. Then, its pair correlation function equals
	\begin{align*}g(s_1-s_2)=\exp(\sigma^2 C(s_1-s_2)),\end{align*}
	where $\sigma^2C(\cdot)$ is the covariance function of the associated Gaussian random field.
\end{Lemma}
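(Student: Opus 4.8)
The plan is to work throughout with the first and second order product densities of the Cox process and to exploit that, for $\Psi=\exp(W)$ with $W$ Gaussian of mean $m(\cdot)$ and covariance $c(\cdot,\cdot)$, these are explicit log-normal moments. First I would record, using the moment generating function of a Gaussian vector,
\begin{align*}
	\rho^{(1)}(s) &= \E\Psi(s) = \exp\!\Big(m(s)+\tfrac12 c(s,s)\Big),\\
	\rho^{(2)}(s_1,s_2) &= \E[\Psi(s_1)\Psi(s_2)] = \exp\!\Big(m(s_1)+m(s_2)+\tfrac12 c(s_1,s_1)+\tfrac12 c(s_2,s_2)+c(s_1,s_2)\Big).
\end{align*}
These are the first two product densities of $N\sim CP(\Psi)$, and together with the fact that a Gaussian field is determined by $m$ and $c$ they show that the map $W\mapsto\mathrm{law}(N)$ is injective; this is precisely the content of Theorem~1 in \cite{moeller1998lgcp}, which I would cite for the uniqueness part.

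For the easy direction ($\Leftarrow$) I would argue that if $W$ is strictly stationary then the transformed field $\Psi=\exp(W)$ is a stationary random field, hence the directing random measure $\Lambda(\mathrm{d}s)=\Psi(s)\,\mathrm{d}s$ is a stationary random measure; a Cox process directed by a stationary random measure is itself stationary, so $N$ is stationary.

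For the converse ($\Rightarrow$) I would use the change of variables that translating the point pattern corresponds to translating the directing field inside the exponential: the shift $S_xN$ of $N$ by $x\in\R^d$ is again a log Gaussian Cox process, directed by the translated Gaussian field $W(\cdot-x)$. Stationarity of $N$ means $S_xN\stackrel{d}{=}N$ for every $x$, i.e.\ the log Gaussian Cox processes directed by $W(\cdot-x)$ and by $W$ share the same law. By the injectivity recorded above this forces $W(\cdot-x)\stackrel{d}{=}W$ for all $x$, which is the stationarity of $W$. Equivalently, and more hands-on, stationarity of $N$ makes $\rho^{(1)}$ constant and $\rho^{(2)}$ translation invariant; feeding this into the two displayed formulas forces $m$ and $c(s,s)$ to be constant and $c(s_1,s_2)$ to depend only on $s_1-s_2$, and weak stationarity of a Gaussian field is strict stationarity.

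Finally, assuming stationarity, I would compute the pair correlation function by dividing the two displayed formulas. Writing $c(s,s)=\sigma^2$ and $c(s_1,s_2)=\sigma^2 C(s_1-s_2)$, the mean contributions $m(s_1)+m(s_2)$ and the variance contributions cancel between numerator and denominator, leaving
\begin{align*}
	g(s_1-s_2)=\frac{\rho^{(2)}(s_1,s_2)}{\rho^{(1)}(s_1)\rho^{(1)}(s_2)}=\exp\!\big(\sigma^2 C(s_1-s_2)\big),
\end{align*}
as claimed. The log-normal algebra and the easy ($\Leftarrow$) direction are immediate; I expect the main obstacle to be the ($\Rightarrow$) direction, where one must ensure that stationarity of the point process genuinely transfers to stationarity of $W$ — this is exactly where the uniqueness part of Theorem~1 in \cite{moeller1998lgcp}, namely that the first and second order product densities determine the law of a log Gaussian Cox process, does the essential work.
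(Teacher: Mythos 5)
Your proof is correct, but note that the paper does not actually prove this lemma: it is stated with a pointer to Theorem~1 of M{\o}ller, Syversveen and Waagepetersen \citep{moeller1998lgcp}, so your argument is genuinely different in that it is (nearly) self-contained --- in effect you reconstruct the content of the cited theorem. The log-normal moment identities, the quotient computation for $g$ (which matches the paper's definition of $g$ directly in terms of moments of $\Psi$), and the backward direction (stationary $W$ gives a stationary directing measure, hence a stationary Cox process) are all exactly right. For the forward direction you offer two routes: one invoking the uniqueness part of the same Theorem~1 (shift the point process, identify the shifted process as the log Gaussian Cox process directed by $W(\cdot-x)$, conclude $W(\cdot-x)\stackrel{d}{=}W$ by injectivity), and a hands-on one via translation invariance of the first and second order product densities, which forces $c(s_1,s_2)$ to depend only on $s_1-s_2$, hence $c(s,s)$ and then $m$ to be constant, and weak stationarity of a Gaussian field is strict stationarity. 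The second route is preferable if one wants to avoid leaning on the very result the paper cites; its only technical caveat is that product densities are a priori defined only up to null sets, so the pointwise conclusions require working with continuous versions, which is legitimate here because the paper assumes $W$ sample-continuous with continuous covariance. What the paper's citation buys is brevity; what your derivation buys is transparency, with the Gaussian moment generating function as the only nontrivial input.
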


Hence, a log Gaussian Cox process enables a one-to-one mapping between its pair correlation function and the covariance function of the associated Gaussian random field. Therefore, the spatial random effects influencing the random intensity function of the Cox process can be studied by properties of the Cox process itself.
The minimum contrast method \citep{diggle1984mcm, moeller1998lgcp} exploits this fact.

\begin{Proposition}[Minimum contrast method, \citep{moeller1998lgcp}]\label{prop:mincontrast}$\phantom{a}$
	Suppose that $T_{\sigma^2,\beta}(h)=\sigma^2C_{\beta}(h)$ is the covariance function of a Gaussian random field $W$. Let $g$ be the pair correlation function of the log Gaussian Cox process associated to $W$. If $\widehat{g}$ is an estimator for $g$ and $\widehat{T}(h)=\log \widehat{g}(h)$, then the distance 	
	\begin{align}\label{Minimum Contrast}
		d(T_{\sigma^2, \beta},\hat{T})=\int_{\varepsilon}^{r_0} \bigg(T_{\sigma^2, \beta}(r)^{\alpha}-\hat{T}(r)^{\alpha}\bigg)^2\d r,
	\end{align}
	with tuning parameters $0 \leq \varepsilon < r_0$ and $\alpha>0$, is minimized by the minimum contrast estimators
	\begin{align}\label{eq:mincontrast}
		\hat{\beta}=\underset{\beta}{\argmax\ }\frac{A(\beta)^2}{B(\beta)}, \quad \hat{\sigma}^2=\left(\frac{A(\hat{\beta})}{B(\hat{\beta})}\right)^{1/\alpha},\end{align}	
	with 	
	\begin{align*}A(\beta)=\int_{\varepsilon}^{r_0}{\big[\log\big(\hat{g}(r)\big)C_{\beta}(r)\big]^{\alpha}}\d r, \quad B(\beta)=\int_{\varepsilon}^{r_0}{C_{\beta}(r)^{2\alpha}}\d r.\end{align*}
	
\end{Proposition}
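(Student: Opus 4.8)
The plan is to treat this as a profiled least-squares problem: I substitute the explicit forms of $T_{\sigma^2,\beta}$ and $\widehat{T}$ into the contrast, profile out the scale parameter $\sigma^2$ in closed form for each fixed $\beta$, and then reduce the remaining optimisation over $\beta$ to maximising $A(\beta)^2/B(\beta)$.

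First I would invoke Lemma~\ref{lemma:CPsecondorder}, which gives $\log g(h)=\sigma^2 C(h)$ for the associated log Gaussian Cox process, so that the model object $T_{\sigma^2,\beta}(r)=\sigma^2 C_\beta(r)$ and the data-based object $\widehat{T}(r)=\log\widehat{g}(r)$ are compared on the same scale in \eqref{Minimum Contrast}. Using the factorisation $[\log\widehat{g}(r)\,C_\beta(r)]^\alpha=(\log\widehat{g}(r))^\alpha C_\beta(r)^\alpha$ together with $T_{\sigma^2,\beta}(r)^\alpha=(\sigma^2)^\alpha C_\beta(r)^\alpha$, the essential observation is that the contrast depends on $\sigma^2$ only through $\tau:=(\sigma^2)^\alpha$, and does so quadratically.

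Expanding the square then yields
\[
	d(T_{\sigma^2,\beta},\widehat{T})=B(\beta)\,\tau^2-2A(\beta)\,\tau+D,\qquad D:=\int_\varepsilon^{r_0}(\log\widehat{g}(r))^{2\alpha}\,\d r,
\]
with $A,B$ exactly as defined and $D$ independent of both $\beta$ and $\tau$. For fixed $\beta$ with $B(\beta)>0$ this is a strictly convex quadratic in $\tau$, minimised at $\tau^\star=A(\beta)/B(\beta)$ with minimal value $D-A(\beta)^2/B(\beta)$. Minimising over $\beta$ therefore amounts to maximising $A(\beta)^2/B(\beta)$, which yields $\widehat{\beta}$; back-substituting $\tau^\star=(\widehat{\sigma}^2)^\alpha=A(\widehat{\beta})/B(\widehat{\beta})$ and solving for $\widehat{\sigma}^2$ reproduces the two stated estimators.

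The computation is elementary, and the genuine subtlety — the one point I expect to require care — is the factorisation of the $\alpha$-th power, which needs $\log\widehat{g}$ and $C_\beta$ to be sign-compatible (nonnegative) on $[\varepsilon,r_0]$; this is the natural regime for a positively associated log Gaussian Cox process, where $g\geq 1$ and $C_\beta\geq 0$. One also checks the nondegeneracy $B(\beta)>0$, which holds whenever $C_\beta$ does not vanish identically on the integration window, guaranteeing that $\tau^\star$ is a true minimiser. The decisive conceptual step is recognising that the reparametrisation $\tau=(\sigma^2)^\alpha$ linearises the dependence on the scale, so that $\sigma^2$ can be profiled out exactly and the two-parameter minimisation collapses to the one-dimensional $\argmax$ over $\beta$.
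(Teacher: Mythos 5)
Your derivation is correct, but note for calibration that the paper contains no proof of this proposition to compare against: it is imported from \cite{moeller1998lgcp} (alongside Lemma~\ref{lemma:CPsecondorder}), and the appendix gives proofs only for the paper's own results. Your profiled least-squares argument --- reparametrising by $\tau=(\sigma^2)^\alpha$, writing the contrast as the quadratic $B(\beta)\tau^2-2A(\beta)\tau+D$ with $D=\int_\varepsilon^{r_0}(\log\widehat{g}(r))^{2\alpha}\,\d r$, minimising in $\tau$ to get $\tau^\star=A(\beta)/B(\beta)$ and profile value $D-A(\beta)^2/B(\beta)$, then maximising $A(\beta)^2/B(\beta)$ over $\beta$ --- is precisely the standard derivation underlying the cited result, so your route is the expected one. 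One point deserves to be stated explicitly rather than parenthetically: the formula $\hat\sigma^2=(A(\hat\beta)/B(\hat\beta))^{1/\alpha}$ only makes sense if $A(\hat\beta)\geq 0$; if $A(\beta)<0$ for some $\beta$, the unconstrained minimiser $\tau^\star$ is negative, the infimum of the quadratic over the admissible range $\tau>0$ is the boundary value $D$, and maximising $A^2/B$ over such $\beta$ would then be incorrect. Your sign-compatibility hypothesis ($\log\widehat{g}\geq 0$ and $C_\beta\geq 0$ on $[\varepsilon,r_0]$) removes this issue and simultaneously justifies the factorisation $[\log\widehat{g}(r)\,C_\beta(r)]^\alpha=(\log\widehat{g}(r))^\alpha C_\beta(r)^\alpha$ for non-integer $\alpha$, so with that hypothesis recorded as part of the statement's implicit regime, the proof is complete.
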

The minimum contrast method minimizes the distance of the pair correlation function $g$ and its estimator $\widehat{g}$. Thus, the task of estimating the covariance parameters of $W$ is transformed to a non-parametric estimation of $g$. \\

\textbf{Combined procedure for estimation of $\beta$ and $\sigma^2$.}
We consider the same setting as in Section \ref{sec:estpsipract}, that is, an observation $y$ of $Y$ is given on a compact set $\mathcal{D}$. We set $K=\mathcal{D}\ominus B_R(o)$ and approximate $N_K^y\approx N_K^{Y}|_{Y=y}$.
Now Lemma~\ref{lemma: postprocessing} justifies to approximate a realization of $\Phi_K$ by a realization of $$\widehat{\Phi}_K=p\cdot N_K^y+PP((1-b_K^y)_+\widehat{\psi}_{K_R})\vert_K,$$
by simulating additional points from the point process $PP(\1_{K_0}(1-b)\widehat{\psi}_{K_R})\vert_K$, where $\widehat{\psi}_{K_R}$ is the estimator described in Section \ref{sec:estpsipract}. Next, we estimate the pair correlation function $g$ of $\Phi_K$ by a non-parametric kernel estimator based on the realization $\widehat{\phi}_K$ of $\widehat{\Phi}_K$. Finally, the minimum contrast method can be applied to $\widehat{g}$ to obtain estimates of the parameters $\sigma^2$ and $\beta$ of the log Gaussian Cox process.


\begin{Remark}
	Besides using $\widehat{\phi}_K$ to estimate $g$, it is also possible to simulate $\widetilde{\phi}_K\sim PP(\widehat{\psi}_{K_R}(s)~\d s)$ on the whole set $K$ and build estimators for $g$ from samples of $\widetilde{\phi}_K$. However, this leads to a higher bias since the intensity of $\widehat{\phi}_K$ is exactly equal to $\psi$ on $\{b_K^y\geq 1\}$ if $N_K^y$ is known. Additionally, computing the thinning of $N_K^y$ on $\{b_K^y\geq 1\}$ is much faster than simulating $PP(\widehat{\psi}_{K_R}(s)~\d s)$ on $\{b_K^y\geq 1\}$.
\end{Remark}

\begin{figure}[bth]
	\centering
	\includegraphics[width=0.35\linewidth]{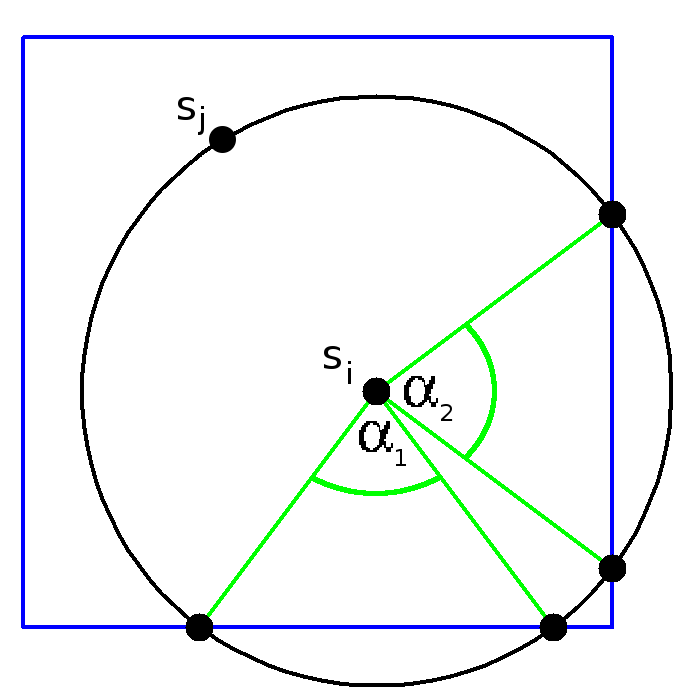}
	\caption{The edge correction $b_{ij}$ is the ratio of the whole circumference $2\pi$ and the circle arcs $\gamma_{ij}=2\pi-\alpha_1-\alpha_2$ within the square $K$.}
	\label{fig:kernelweights}
\end{figure}

\textbf{Practical aspects of implementation.} We propose to use a non-parametric kernel estimator as discussed by \cite{stoyanfraktale1992} (Part III, Chapter 5.4.2). Consider $\widehat{\phi}_K=\sum_{i=1}^n \delta_{s_i}$, then we estimate the pair correlation function $g$ by		
\begin{equation*}
	\widehat{g}(r)=\frac{|K|}{2\pi n^2 r}\sum_{\substack{i,j=1\\i\neq j}}^n k_h(r-\|s_i-s_j\|)b_{ij},
\end{equation*}		
with the Epanechnikov kernel $k_h(r)=0.75h^{-1}(1-r^2/h^2)\1_{|r|<h}$,
and kernel weights $b_{ij} \geq 0$ for edge correction (see \cite{ripley77}). These are defined as $b_{ij}={2\pi}/{\gamma_{ij}}$ which is the ratio of the whole circumference of $B_{\|s_i-s_j\|}(s)$ to the circumference within $K$, i.e $\gamma_{ij}$ is the sum of all angles, for which the associated non-overlapping circle arcs are within $K$, see Figure~\ref{fig:kernelweights}.

\begin{Remark}
	The estimates of $\sigma^2$ and $\beta$ obtained from $\widehat{g}$ by the minimum contrast method, have a very high variance. Therefore, this procedure is only recommended if we observe several i.i.d.\ realizations $y_1,\dots, y_n$ of $Y$ and the associated $N_K^{y_1},\dots, N_K^{y_n}$. The final estimates of $\sigma^2$ and $\beta$ may then be defined as the mean or median of the estimates obtained from $\widehat{g}_1,\dots, \widehat{g}_n$.
\end{Remark}

\begin{figure}[tbh]		
	\centering
	\begin{minipage}[b]{7.7cm}
		\includegraphics[width=1\textwidth]{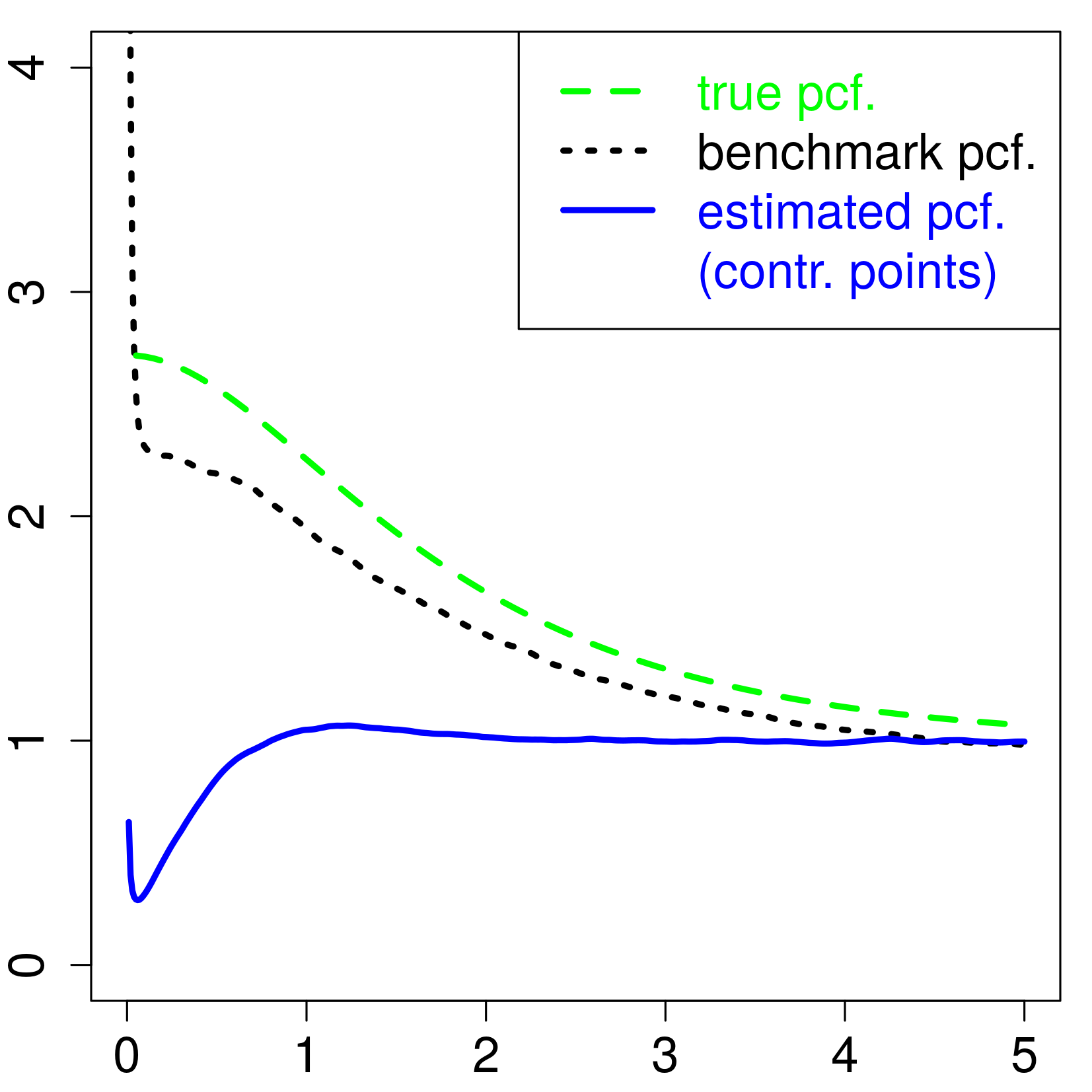}
	\end{minipage}
	\hspace{5mm}
	\begin{minipage}[b]{7.7cm}
		\includegraphics[width=1\textwidth]{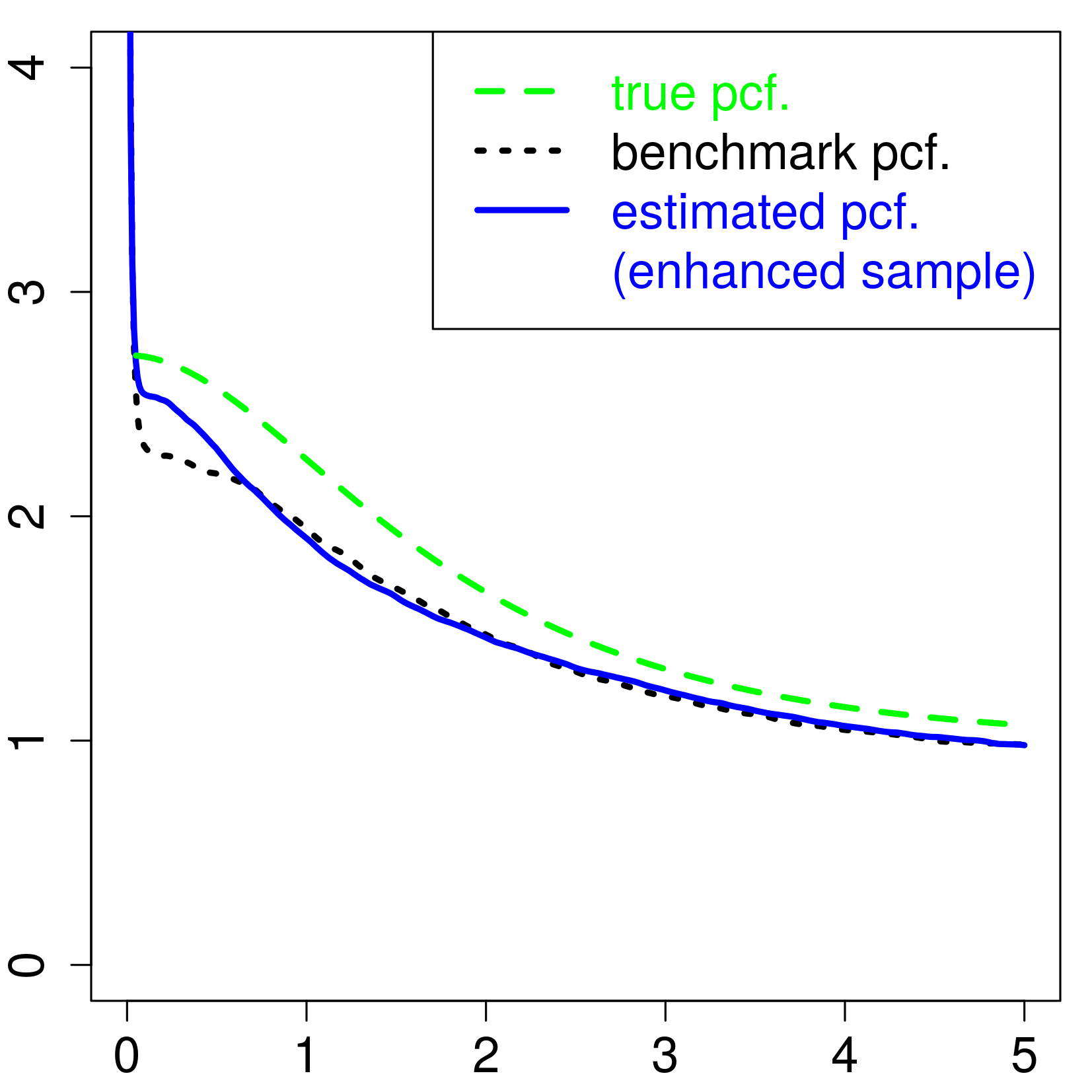}
	\end{minipage}	
	\caption{Estimated pair correlation functions via a sample of $N_K^{Y}|_{Y=y}$ (left) and by points of the modified process $\widehat{\Phi}_K$ (right). They are pointwise averages of $n=50$ experiments.}	\label{fig:paircorr}			
\end{figure}

Plots of the estimated pair correlation functions via a sample of $N_K^{Y}|_{Y=y}$ and by points of $\widehat{\Phi}_K$ are compared in Figure~\ref{fig:paircorr}. They are also compared to the true pair correlation function and the natural benchmark which is obtained from a direct sample of $\Phi_K$ instead of $\widehat{\Phi}_K $. Numerical experiments such as reported in Figure~\ref{fig:paircorr} and Section~\ref{sec:simulationstudy} support that our proposed modification works surprisingly well.

\section{Simulation study}\label{sec:simulationstudy}

We survey the performance of our proposed non-parametric estimator $\widehat{\psi}_D$ \eqref{est:rhoh} of the realization $\psi$ of $\Psi$ and that of the estimators $\widehat{\beta}$ and $\widehat{\sigma^2}$ \eqref{eq:mincontrast} of the parameters of the covariance function of $\Psi=\exp(W)$ in a simulation study.\\

\textbf{Setting.} In our numerical experiments we choose the covariance of the underlying Gaussian process $W$ to be the Whittle-Mat\'ern model \eqref{eq:matern} with known smoothness parameter $\nu~\in~\{0.5,1,2,\infty\}$ and unknown variance $\sigma^2$ and scale $\beta$. The scale $\beta$ will control the size of clusters in our point processes and the variance $\sigma^2$ directs the variability of the number of points within the local clusters. The performances of the associated estimators are compared for different choices $\sigma^2\in\{1,2\}$ and $\beta\in\{1,2\}$. As shape mechanism we consider the fixed storm process $X=\varphi$ where $\varphi$ is the density of the standard normal distribution. We simulate $n=1000$ realizations $y_1,\dots,y_n$ of the corresponding Cox extremal process $Y$ on an equidistant grid with $101^2$ grid points in $[-5,5]^2$. 

Henceforth, we simplify the notation from the previous section by writing $\widehat{\psi}$ instead of $\widehat{\psi}_{D}$ for the estimated intensity function.
A natural benchmark of our estimation procedures from Sections \ref{sec:estpsinonpara} and \ref{sec:estPsipara} are such estimators which are derived from direct samples of a Cox process $N_0~\sim~CP(\Psi(s)~\d s)$ with spatial intensity process $\Psi$.
We denote the benchmark kernel estimator by
\begin{equation}\label{eq:benchmarkest}
	\widehat{{\psi}_0}(s)=h^{-d}\sum_{t\in N_0\cap D} c_{D,h}(t)^{-1} k\left(\frac{s-t}{h}\right),\quad s\in D.
\end{equation}
Accordingly, let $\widehat{\sigma^2_0}$ and $\widehat{\beta}_0$ be the minimum contrast estimators obtained from direct samples of $N_0$.

\vspace{2mm}

\newpage
\textbf{Measures for evaluation.} To assess the performance of our non-parametric estimates we use the following \textit{mean relative variance} 
\begin{align}\label{eq:MRV}
	\widehat{\MRV}(\widehat{\psi},\psi):=n^{-1}|D|^{-1}\sum_{i=1}^n\sum_{s\in D}(c_i\widehat{\psi_i}(s)/\psi_i(s)-1)^2,
\end{align}
with $c_i^{-1}=|D|^{-1}\sum_{s\in D}\frac{\widehat{\psi_i}(s)}{\psi_i(s)}$ for $i=1,\dots,n$.
Up to a scaling constant, $\widehat{\MRV}(\widehat{\psi},\psi)$ is an empirical version of $\MISE(c\cdot\widehat{\psi}/\psi,1)$ with $\MISE(\widehat{\phi},\phi):=\E\int(\widehat{\phi}(s)-\phi(s))^2~\d s.$
We compare the $\MRV$ of our estimated intensity $\hat{\psi}$ with that of the benchmark $\widehat{\psi}_0$. The corresponding relative $\MRV$ of $\widehat{\psi}_0$ and $\widehat{\psi}$ is defined as the ratio $\widehat{\MRV}(\widehat{\psi})/\widehat{\MRV}(\widehat{\psi_0})$.\\
The goodness of fit of the parametric estimates is measured in terms of the empirical mean squared error
$\widehat{\MSE}(\widehat{\theta}):=~n^{-1}\sum_{i=1}^n(\widehat{\theta}_i-~\theta)^2$.
Again the $\MSE$ of $\widehat{\sigma^2}$ and $\widehat{\beta}$ are compared with those of the benchmark estimators $\widehat{\sigma^2_0}$ and $\widehat{\beta}_0$, respectively.\\

\textbf{Results.} 
The results of our simulation study are reported in the tables of Figures~\ref{fig:simunonpar} and \ref{fig:simupar} in Appendix~\ref{sec:app_results}.
The best performance we can hope for is to be as good as the benchmark estimators that are applied to samples of the original point process $N_0 \sim CP(\Psi(s)ds)$.
Hence, in the case of our non-parametric estimation of the realisations of the intensity processes we can expect the ratios $\widehat{\MRV}(\widehat{\psi})/\widehat{\MRV}(\widehat{\psi_0})$ to be always greater or equal to $1$ and at best even close to $1$. Indeed, this is confirmed by the simulation study as can be seen from Figure~\ref{fig:simunonpar}.
All ratios (except one) lie slightly above 1. The exceptional case occurs when the standard error of $\widehat{\MRV}(\widehat{\psi_0})$ is relatively high, where we even outperform the benchmark. This is quite remarkable given that we infer the intensity under an independence assumption that is not necessarily satisfied (cf.\ Section~\ref{sec:discussion} for a discussion) and secondly, we correct it by a data driven quotient as in \eqref{eq:by}. 

Likewise we observe that the standard errors for the MRV are close to the benchmark when $\beta=1$  and much smaller -- sometimes even half the size -- in the case $\beta=2$. This indicates that our estimation procedure for $\psi$ is relatively stable compared to the benchmark. In general, both estimators perform better for the larger value of the scale parameter $\beta$, that is for larger cluster sizes in the point processes, whereas a higher variance $\sigma^2$ naturally leads to a worse performance. The influence of the smoothness parameter is not entirely clear. Looking at the values  $\nu\in\{0.5,1,2\}$ one might conclude that the estimation improves for a smoother intensity. But in case of the smoothest field ($\nu=\infty$) the MRVs get larger again. 
However, what is more important is that both procedures, the one that we proposed for inference on $\psi$ for Cox extremal processes and the benchmark $\widehat \psi_0$, behave coherently as the parameters vary across different smoothness classes, cluster sizes and variability of number of points within local clusters. 

The non-parametric estimates are further used to obtain the parametric estimates of $\sigma^2$ and $\beta$. Here, estimation of the pair correlation function is very sensitive to the choice of the scale. Our maximal scale $\beta=2$ is large in relation to the size of the observation window $[-5,5]^2$ which causes a bias in the estimation of all pair correlation functions. Therefore, all parametric estimates -- the benchmarks $\widehat{\sigma^2}_0,\widehat{\beta}_0$ as well as our estimates $\widehat{\sigma^2},\widehat{\beta}$ -- are also biased when $\beta=2$. The estimation of $\sigma^2$ is volatile if $\sigma^2=2$, this applies in particular to our $\widehat{\sigma^2}$ which fails when both $\beta=1$ and $\sigma^2=2$. Still, in all other cases the $\MSE$ of our multi-stage estimators is close to that of the benchmark. There are even some cases when we outperform the benchmark, which is not surprising as the standard errors are very high in general.

\section{Discussion}\label{sec:discussion}

In this article we present a new class of conditionally max-stable random fields based on Cox processes, which we therefore also call Cox extremal processes. We prove in Theorem \ref{thm:domainofattraction} that these processes are in the MDA of familiar max-stable models. Hence, they have the potential to model spatial extremes on a smaller time scale.

An objective of practical importance is to identify the random effects influencing the underlying Cox process from the centres of the contributing storms. In order to make inference feasible, we impose an additional independence assumption on our observed data (see Lemma \ref{lemma:contpointsint}) that allows to derive a uniformly consistent non-parametric kernel estimator \eqref{eq:estpsi} for the realization $\psi$ of the intensity process $\Psi$ (Corollary \ref{cor:uniformconvergence}). 
Imposing such an independence assumption can be seen in a similar manner to the composite likelihood method that ignores dependence among higher order tupels.
We believe that our condition is sufficiently well satisfied in most situations, since only a small number of large storms from the Cox process $N$ approximate the Cox extremal process $Y$ already quite well. Practical adjustments of the estimator \eqref{eq:estpsi}, in particular to a non-asymptotic setting, are presented in Section~\ref{sec:estpsipract}. 

For parametric estimation the non-parametric estimator \eqref{est:rhoh} can be used to correct the observed point processes $N_K^y$ of contributing storm centres in order to obtain samples from $CP(\Psi)$ (Lemma~\ref{lemma: postprocessing}). If $\Psi$ is log Gaussian, the minimum contrast method can be applied subsequently to obtain estimates for the parameters of the covariance function of $\log \Psi$. 

The performance of our proposed estimation procedures is addressed in a simulation study (Section~\ref{sec:simulationstudy}).
Here, the best we can hope for is that our estimators can compete with the benchmark estimators applied to the original point process $CP(\Psi(s)ds)$. Indeed, our non-parametric procedure is usually relatively close to the benchmark which is quite remarkable in view of the necessary adjustments we have to make.
Also, looking at different kinds of smoothness, cluster sizes and variances we find evidence for the stability of our proposed estimation procedure when compared to the benchmark.
Both (our procedure and the benchmark) behave coherently across different choices of these properties. Similar behaviour can be observed for the parametric estimates, even though they are more volatile and the estimation of the pair correlation function is generally very sensitive to the choice of scale.

Within our simulation study and all other illustrations, we consider deterministic storm processes $X=\varphi$ where $\varphi$ is the density of the standard normal distribution. This restriction is only done to reduce the computing time. 
Indeed, all estimators presented in Section~\ref{sec:estpsinonpara} and \ref{sec:estPsipara} are valid for much more general $X$ and simulations showed that the specification of $X$ only slightly influences the inference on $\Psi$ as long as enough centres of contributing storms can be identified. For instance, if we impose the monotonicity assumption \eqref{eq:Xmonotone} on the storm process $X$, the majority can be recovered as local maxima of the realization $y$ of $Y$. Computational methods for identification of such points are left for further research.

\vspace{5mm}

{\small
	\textbf{Acknowledgments.} The research of MD was partly supported by the DFG through 'RTG 1953 - Statistical Modeling of Complex Systems and Processes' and Volkswagen Stiftung within the project 'Mesoscale Weather Extremes - Theory, Spatial Modeling and Prediction'. 
	The authors are grateful to A.~Baddeley for useful comments on the estimation of pair correlation functions and thank M.~Oesting for a discussion of simulation algorithms for max-stable processes.
}
\newpage
\appendix

\section{Appendix}\label{sec:appendix}
\enlargethispage{100mm}
\subsection{Simulation results}\label{sec:app_results}

\vspace*{-4mm}
\begin{figure}[H]
	\centering
	\includegraphics[width=0.85\linewidth]{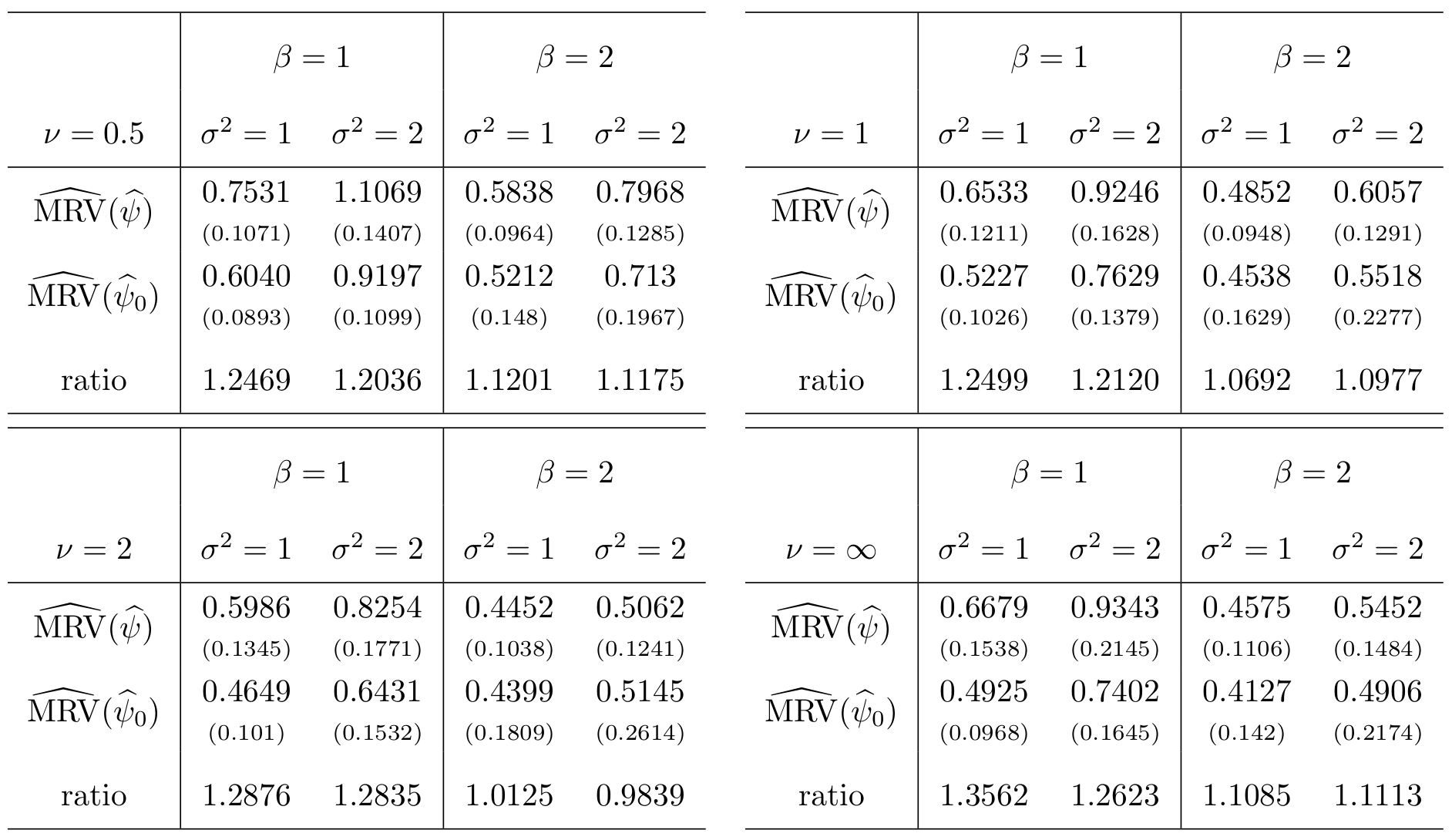}
	\caption{Results of the simulation study for the non-parametric estimators. The estimator $\widehat{\psi}$ is compared with its benchmark estimator $\widehat{\psi}_0$. The standard errors are  reported in brackets.}
	\label{fig:simunonpar}
\end{figure}

\vspace*{-5mm}
\begin{figure}[H]
	\centering
	\includegraphics[width=0.85\linewidth]{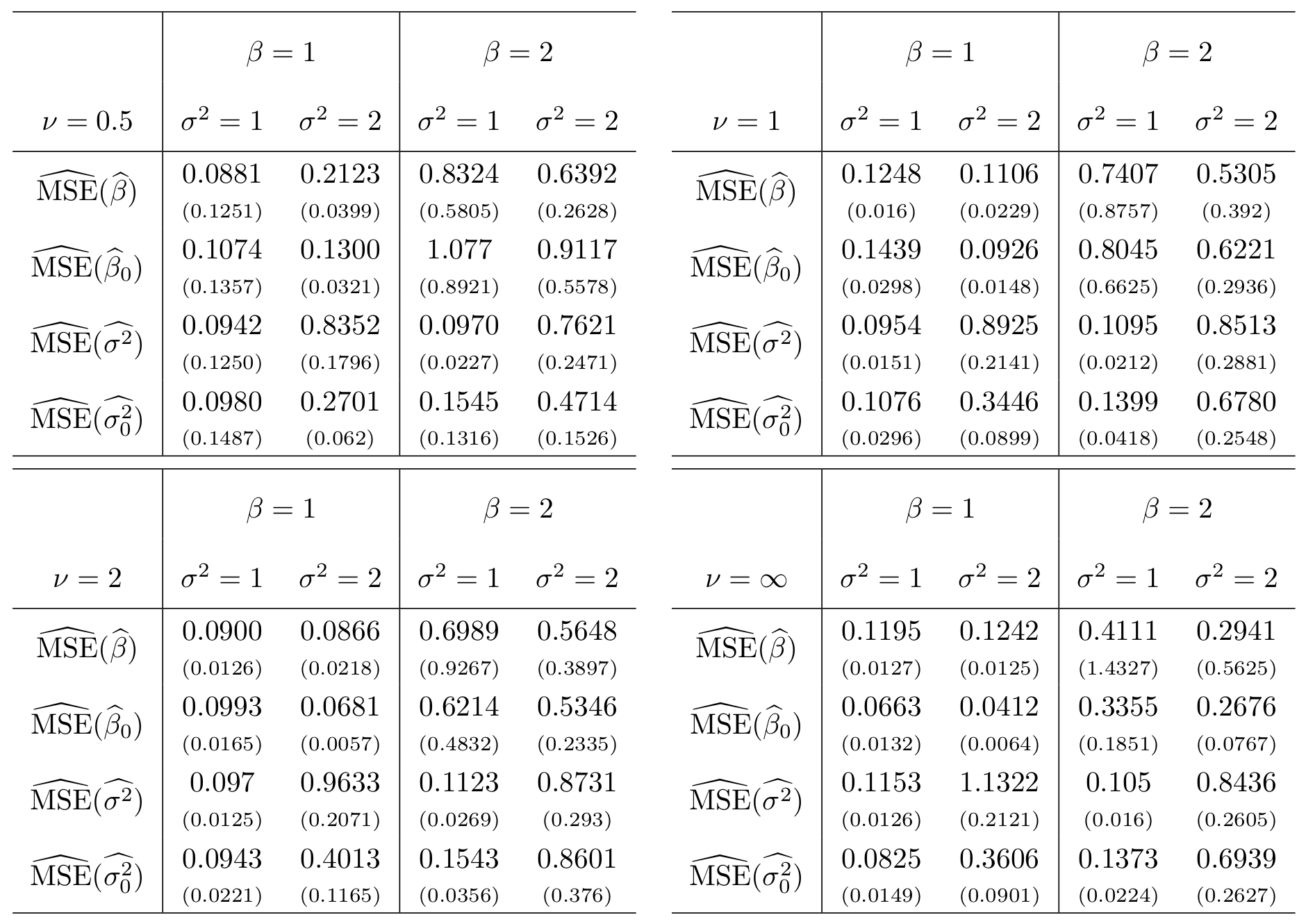}
	\caption{Results of the simulation study for the parametric estimators. The estimators $\widehat{\beta}$ and $\widehat{\sigma^2}$ are compared with their benchmark estimators $\widehat{\beta}_0$ and $\widehat{\sigma^2_0}$. The standard errors are  reported in brackets.}
	\label{fig:simupar}
	
\end{figure}

\newpage
\subsection{Preliminaries on point processes}\label{sec:pointprocesses} 
In this article, we follow the conventions based on \cite{daley2003pp, daley2008pp} briefly reviewed here.
Let $E$ be a complete separable metric space, and $\mathscr{B}(E)$ its Borel $\sigma$-field. A Borel measure $\mu$ on $E$ is called \textit{boundedly finite} if $\mu(A)<\infty$ for locally compact Borel sets $A$. 
The space of all \textit{boundedly finite} measures on $\mathscr{B}(E)$ is denoted by $\mathcal{M}$ and its subspace of \textit{simple counting measures} by $\mathcal{M}_p$. 
Both spaces, $\mathcal{M}$ and $\mathcal{M}_p$, are itself complete separable metric spaces if endowed with the weak hash topology, see Appendix 2.6 in \citep{daley2003pp}. Let $\mathscr{B}(\mathcal{M})$ and $\mathscr{B}(\mathcal{M}_p)$ be the smallest $\sigma$-algebra on $\mathcal{M}$ and accordingly $\mathcal{M}_p$, for which the mappings $\mu \rightarrow \mu(A)$ respectively $N\rightarrow N(A)$ are measurable for all $A$ in $\mathscr{B}(E)$.
A \textit{random measure} is a measurable mapping $\mu$ from a probability space $(\Omega,\mathscr{A},\P)$ to $(\mathcal{M},\mathscr{B}(\mathcal{M}))$ and accordingly a \textit{point process} is a  measurable mapping from $(\Omega,\mathscr{A},\P)$ to $(\mathcal{M}_p,\mathscr{B}(\mathcal{M}_p))$. Note that a point process, as defined here, always allows for a measurable enumeration, see Lemma~9.1.XIII in \cite{daley2008pp}. Convergence of random measures and point processes is always meant in the sense of weak convergence in $\mathcal{M}$ and $\mathcal{M}_p$, respectively. 

\subsection{Proofs}\label{sec:proofs}

\subsubsection{Proofs for Section~\ref{sec:model}}

\begin{proof}[Lemma~\ref{lemma:CP-CLT}]
	The point process on the left-hand side equals in distribution the $1/n$-thinning of $\sum_{i=1}^{n} N_i$ with $N_i \stackrel{i.i.d.}{\sim} CP(\Lambda_i)$. Hence, by Theorem~11.3.III in \cite{daley2008pp} the desired convergence holds true if and only if $n^{-1}\sum_{i=1}^{n} N_i$ converges to $\lambda$, which follows from the multivariate law of large numbers and Theorem~11.1.VII in \cite{daley2008pp}.
\end{proof}	

\begin{proof}[Lemma~\ref{lemma:sample-cts}]
	We follow closely the arguments of \citep[Proposition~13]{kabluchkoschlather2009br}.
	For $K \subset \R^d$ and $c>0$, set 
	\begin{align*}
		I_c(K)=\left\{i \in \N \,:\, \sup_{t \in K} u_iX_i(t-s_i)>c\right\}.
	\end{align*}
	\begin{enumerate}
		\item Conditional on the the process $\Psi$, the number of points in $I_c(K)$  is Poisson distributed with parameter
		\begin{align*} 
			\Lambda\left( \left\{(s,u,X) \,:\, \sup_{t \in K} uX(t-s)>c\right\} \right)
			=c^{-1} \, \E_X \int_{\R^d} \sup_{t \in K} X(t-s)\Psi(s)\d s, 
		\end{align*}	
		which is $\P_\Psi$-almost surely finite by the integrability condition \eqref{eq:int-cond-a}. Hence, the number of points in $I_c(K)$ is almost surely finite, which entails that 
		\begin{align*} 
			\sup_{t \in K}Y(t) \leq  \bigvee_{i \in I_{c}(K)} \sup_{t \in K} u_i X_i(t-s_i) \vee c
		\end{align*}	
		is almost surely finite. 
		\item Under the additional condition \eqref{eq:cond-sample-cts}, there exists $n \in \N$, such that 
		\begin{align*}
			Y(t) = \bigvee_{i \in I_{c}(K) \cup \{1,\dots,n\}} u_i X_i(t-s_i) \quad  \forall t \in K
		\end{align*}
		almost surely. That is, the process $Y$ can be represented on $K$ as the maximum of a finite number of continuous functions, which ensures the continuity of $Y$ on $K$.
	\end{enumerate}
\end{proof}

\begin{proof}[Lemma~\ref{lemma:sample-cts2}]
	Due to its compactness, $K$ can be split into finitely many (possibly overlapping) compact pieces $K_1, \dots, K_p$ of diameter less than $r$.
	Since $\inf_{s \in K} \Psi(s) >0$ almost surely, there are almost surely infinitely many elements in $I_j:=\{i \in \N \,:\, s_{i} \in K_j\}$ 
	of the Cox process $N$ (that underlies the construction of $Y$) in each of these pieces $K_j$, $j=1,\dots,p$. Since there exists an $r>0$ such that $\P_X( B_r(o)\subset \supp(X))>0$, there exists almost surely an element (in fact, infinitely many elements) $i_j \in I_j$ within the Cox process, such that $B_r(o) \subset \supp(X_{i_j})$.
	Summarizing, $K$ is almost surely covered by
	\begin{align*} 
		K \subset \bigcup_{j=1}^{p} K_j \subset \bigcup_{j=1}^{p} B_r(s_{i_j}) \subset \bigcup_{j=1}^{p} \supp(X_{i_j}(\cdot - s_{i_j})).
	\end{align*}
	Setting $n:=\max_{j=1}^p {i_j}$ and $c_j:=\inf_{s \in B_r(o)} X_{i_j}(s)>0$, we deduce
	that 
	\begin{align*}
		\inf_{t \in K} \bigvee_{i=1}^{n} u_iX_i(t-s_i) 
		\geq \inf_{t \in K} \bigvee_{j=1}^{p} u_{i_j} X_{i_j}(t - s_{i_j})
		\geq \inf_{t \in K} \bigvee_{j=1}^{p} u_{i_j} c_{j} \ind_{B_r(s_{i_j})}(t)
		\geq \bigvee_{j=1}^{p} u_{i_j} c_{j}>0
	\end{align*}
	is strictly greater than zero as desired.
\end{proof}

\begin{Remark}
	In fact, the condition $\inf_{s \in K} \Psi(s) >0$ in Lemma~\ref{lemma:sample-cts2} may be further relaxed to the requirement 
	\begin{align*}
		K \subset \underset{s\in \supp(\Psi)}{\bigcup} B_r(s).
	\end{align*}
\end{Remark}

\begin{proof}[Lemma~\ref{lemma: stationarity}]
	Due to the stationarity of $\Psi$ and the invariance of the Lebesgue measure with respect to translations, we obtain
	\begin{align*}
		&\P\Big(Y(t_1+h)\leq y_1, \dots, Y(t_k+h)\leq y_k\Big)\\
		&=\  \E_\Psi\Big[\exp\big(-\mu_Y^{-1}\E_X\int_{\R^d}\bigvee_{j=1}^k \frac{X(t_j+h-s)}{y_j}\Psi(s)~\d s\big)\Big]\\
		& =\  \E_\Psi\Big[\exp\big(-\mu_Y^{-1}\E_X\int_{\R^d}\bigvee_{j=1}^k \frac{X(t_j-s)}{y_j}\Psi(s+h)~\d s\big)\Big]\\
		&=\ \E_\Psi\Big[\exp\big(-\mu_Y^{-1}\E_X\int_{\R^d}\bigvee_{j=1}^k \frac{X(t_j-s)}{y_j}\Psi(s)~\d s\big)\Big]=\ \P\Big(Y(t_1)\leq y_1, \dots, Y(t_k)\leq y_k\Big).\\
	\end{align*}
\end{proof}

We will now prove Theorem~\ref{thm:domainofattraction}, the main result of Section~\ref{sec:model}, i.e.\ the weak convergence of the random fields  
\begin{align*}n^{-1}\left(\bigvee_{i=1}^nY_i\right)\rightarrow Z.\end{align*}
To this end we set the left-hand-side $Y^{(n)}:=n^{-1}\left(\bigvee_{i=1}^nY_i\right)$ which can be more conveniently represented as 
\begin{align*}
	Y^{(n)}(t)\stackrel{d}{=}\bigvee_{i=1}^\infty u_i X_i(t-s_i), \qquad t \in \R^d, 
\end{align*}	
where $N_n=\sum_{i=1}^\infty \delta_{(s_i,u_i,X_i)}$ is a Cox-process on $\R^d\times (0,\infty]\times \mathbb{X}$, directed by the random measure 
\begin{align*}
	\mathrm{d}\Lambda_n(s,u,X)=\mu_Y^{-1} \, n^{-1}\sum_{i=1}^n\Psi_i(s)\mathrm{d}s \, u^{-2} \mathrm{d}u \, \mathrm{d}\P_X,
\end{align*}
and $\Psi_i$, $i=1, \dots, n$ represent i.i.d.\ copies of $\Psi$. The random measure $\Lambda_n$ is the directing measure of the union of the underlying independent Cox-processes of the random fields $Y_i$, $i=1,\dots,n$, scaled by $n^{-1}$. By Lemma~\ref{lemma:CP-CLT}, the point process $N_n$ converges weakly to the Poisson-process with directing measure
\begin{align*}
	\d\lambda(s,u,X)= {\mu_Z}^{-1} \, \d s \, u^{-2} \d u \, \d \P_X
\end{align*}
that underlies the mixed moving maxima random field $Z$. The latter convergence indicates already the result of Theorem~\ref{thm:domainofattraction}.
In order to prove Theorem~\ref{thm:domainofattraction}, we show first the convergence of the finite dimensional distributions and then the tightness of the sequence $Y^{(n)}$, $n=1,2,\dots$.

\begin{Lemma}[Convergence of finite-dimensional distributions] \label{lemma:fidi}
	Let the random fields $Y$ and $Z$ be specified as in Theorem~\ref{thm:domainofattraction}, then the finite dimensional distributions of $Y^{(n)}$ converge to those of $Z$ as $n \to \infty$.	
\end{Lemma}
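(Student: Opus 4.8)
The plan is to compute the finite-dimensional distributions of $Y^{(n)}$ in closed form and pass to the limit, matching the known Laplace-type formula for $Z$. Fix points $t_1,\dots,t_k \in \R^d$ and thresholds $y_1,\dots,y_k>0$. Since $Y^{(n)}(t_j)=n^{-1}\bigvee_{i=1}^n Y_i(t_j)$, the event $\{Y^{(n)}(t_j)\le y_j \text{ for all } j\}$ coincides with $\{Y_i(t_j)\le n y_j \text{ for all } i,j\}$. Exploiting that the copies $Y_1,\dots,Y_n$ are independent, this factorizes and I would obtain
\[
\P\big(Y^{(n)}(t_1)\le y_1,\dots,Y^{(n)}(t_k)\le y_k\big)=\Big[\P\big(Y(t_1)\le n y_1,\dots,Y(t_k)\le n y_k\big)\Big]^n.
\]

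Next I would substitute the explicit finite-dimensional law of $Y$ recorded in the Remark following Lemma~\ref{lemma:sample-cts} (and re-derived in the proof of Lemma~\ref{lemma: stationarity}). Writing $a(s)=\E_X\bigvee_{j=1}^k X(t_j-s)/y_j$ and pulling $\E_X$ through the Lebesgue integral by Tonelli, the threshold $n y_j$ scales the integrand by $1/n$, so the inner probability equals $\E_\Psi\exp(-I/n)$ with $I=\mu_Y^{-1}\int a(s)\Psi(s)\,\d s$ a nonnegative random variable. The task then reduces to establishing
\[
\Big[\E_\Psi\exp(-I/n)\Big]^n\longrightarrow\exp(-\E_\Psi I)\qquad\text{as }n\to\infty.
\]

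For this limit I would first verify $\E_\Psi I<\infty$: by stationarity of $\Psi$ one has $\E_\Psi I=\mu_Y^{-1}c_\Psi\int a(s)\,\d s$, and bounding the maximum by the sum gives $\int a(s)\,\d s\le\sum_j y_j^{-1}\E_X\int X(t_j-s)\,\d s$, which is finite by \eqref{eq:int-cond-X}, while \eqref{eq:int-cond-rho} guarantees $c_\Psi<\infty$. Since $0\le n(1-e^{-I/n})\le I$ and $n(1-e^{-I/n})\to I$ pointwise, dominated convergence with dominating function $I$ yields $n\,\E_\Psi(1-e^{-I/n})\to\E_\Psi I$. Writing $\E_\Psi e^{-I/n}=1-c_n/n$ with $c_n\to\E_\Psi I$, the elementary limit $(1-c_n/n)^n\to e^{-\E_\Psi I}$ closes this step.

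Finally I would identify the limit as the finite-dimensional law of $Z$. Using $\mu_Z=\mu_Y/c_\Psi$ so that $\mu_Y^{-1}c_\Psi=\mu_Z^{-1}$, and Tonelli once more to restore $\E_X$ outside the integral, the limit becomes $\exp\big(-\mu_Z^{-1}\E_X\int\bigvee_{j}X(t_j-s)/y_j\,\d s\big)$, which is precisely $\P(Z(t_1)\le y_1,\dots,Z(t_k)\le y_k)$ for the mixed moving maxima field $Z$. The main obstacle is the interchange of limit and expectation in the second display: everything hinges on the integrability $\E_\Psi I<\infty$, which is exactly where \eqref{eq:int-cond-rho} and \eqref{eq:int-cond-X} enter, and on the uniform domination by $I$ that legitimizes dominated convergence in place of a naive Taylor expansion. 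One could alternatively route the argument through the Cox-process central limit theorem of Lemma~\ref{lemma:CP-CLT}, but the direct Laplace-functional computation sketched here is the most transparent for the finite-dimensional distributions.
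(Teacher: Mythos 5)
Your proof is correct, but it takes a genuinely different route from the paper. The paper also starts from the conditional Laplace representation and the random variable $H_\Psi(y)=\mu_Y^{-1}\E_X\int\max_j X(t_j-s)y_j^{-1}\Psi(s)\,\d s$ with $\E_\Psi H_\Psi(y)<\infty$, but instead of computing the limit of $\bigl[\E_\Psi e^{-H_\Psi(y)/n}\bigr]^n$ directly, it shows that the vector $(Y(t_1),\dots,Y(t_k))$ lies in the max-domain of attraction of $(Z(t_1),\dots,Z(t_k))$: it recovers $\E_\Psi H_\Psi(y)$ as the derivative at zero of the Laplace transform of $H_\Psi(y)$, applies l'H\^opital's rule to obtain
\begin{align*}
\lim_{\lambda\to\infty}\frac{1-\P(Y(t_1)\le\lambda y_1,\dots,Y(t_k)\le\lambda y_k)}{1-\P(Y(t_1)\le\lambda,\dots,Y(t_k)\le\lambda)}=\frac{\E_\Psi H_\Psi(y)}{\E_\Psi H_\Psi(1)},
\end{align*}
identifies the right-hand side as a multiple of the exponent function of $Z$, and then invokes Resnick's Corollary~5.18(a), together with the remark that the scaling constants are chosen so that the normalization is exactly $n$. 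Your argument replaces the regular-variation machinery by an elementary, self-contained computation: the factorization $\P(Y^{(n)}\le y)=[\P(Y\le ny)]^n$, dominated convergence with the integrable dominating function $I=H_\Psi(y)$ (via $0\le n(1-e^{-I/n})\le I$), and the limit $(1-c_n/n)^n\to e^{-c}$. What your route buys is transparency and independence from external MDA criteria, and it delivers the convergence with the specific normalization $n$ in one stroke; what the paper's route buys is the explicit (and conceptually stronger) statement of multivariate domain-of-attraction membership in the standard regular-variation framework, at the cost of citing Resnick and justifying the differentiation of the Laplace transform at zero. Both hinge on the same two ingredients --- the Cox-process void-probability formula for the fdds of $Y$ and the integrability $\E_\Psi H_\Psi(y)<\infty$ secured by \eqref{eq:int-cond-rho} and \eqref{eq:int-cond-X} --- so your verification of these is exactly where it should be.
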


\begin{proof} We fix $t_1,\dots,t_k \in \R^d$ and show that the random vector $(Y(t_1),\dots,Y(t_k))$ lies in the max-domain of attraction of the random vector $(Z(t_1),\dots,Z(t_k))$. It then automatically follows that the finite dimensional distributions $Y^{(n)}$ converge to those of $Z$, since the scaling constants for each individual $t \in \R^d$ are chosen appropriately. 
	
	For $y=(y_1,\dots,y_k) \in (0,\infty)^d$, it follows from \eqref{eq:int-cond-b} that the non-negative random variable 
	\begin{align*}
		H_\Psi(y):= \mu_Y^{-1}\E_X\int\underset{1\leq j\leq k}{\max}\frac{X(t_j-s)}{y_j} \Psi(s)~\d s
	\end{align*}
	satisfies that its first moment $\E_\Psi(H_\Psi(y))<\infty$ is finite and can be gained from its Laplace transform via 
	\begin{align*}
		\E_\Psi(H_\Psi(y))= - \lim_{t \downarrow 0} \,\frac{d}{\mathrm{d}t}\, \E_\Psi \left(e^{- t \, H_\Psi(y)}\right). 
	\end{align*}
	Hence, by l'H{\^o}pital's rule
	\begin{align*}
		&\lim_{\lambda \to \infty}
		\frac{1-\P(Y(t_1) \leq \lambda y_1, \dots, Y(t_k) \leq \lambda y_k)}{1-\P(Y(t_1) \leq \lambda , \dots, Y(t_k) \leq \lambda )}
		= \lim_{t \to 0}
		\frac{1-\E_{\Psi}(e^{- t \, H_\Psi(y)})}{1-\E_{\Psi}(e^{- t \, H_\Psi(1)})}\\
		&= \frac{\E_\Psi(H_\Psi(y))}{\E_\Psi(H_\Psi(1))}=:V(y)
	\end{align*}
	with $V(cy)=c^{-1}y$. Moreover, $V(y)$ is a multiple of the exponent function of the max-stable random vector $(Z(t_1),\dots,Z(t_k))$
	\begin{align*}
		- \log \P(Z(t_1) \leq y_1, \dots, Z(t_k) \leq y_k) 
		= \mu_Z^{-1}\E_X\int\underset{1\leq j\leq k}{\max}\frac{X(t_j-s)}{y_j}~\d s
		= \E_\Psi(H_\Psi(y)).
	\end{align*}
	Hence, by \cite[Corollary~5.18 (a)]{resnick2008neuauflage}, the random vector 
	$(Y(t_1),\dots,Y(t_k))$ lies in its domain of attraction.
\end{proof}







The following lemma will be useful to prove the tightness of the sequence $Y^{(n)}$.

\begin{Lemma}\label{lemma:ineq}
	Let $a_n$ and $b_n$ be bounded sequences of non-negative real numbers, then 
	\begin{align*}
		\left|\bigvee_{n=1}^{\infty}a_n-\bigvee_{n=1}^{\infty}b_n\right|\leq \bigvee_{n=1}^{\infty}\left| a_n-b_n\right|.
	\end{align*}
\end{Lemma}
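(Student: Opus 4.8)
The plan is to show that the supremum operation is nonexpansive by reducing the bound to a pointwise triangle inequality. First I would introduce the abbreviations $A=\bigvee_{n=1}^\infty a_n$, $B=\bigvee_{n=1}^\infty b_n$ and $D=\bigvee_{n=1}^\infty |a_n-b_n|$. The hypothesis that $(a_n)$ and $(b_n)$ are bounded sequences of non-negative reals guarantees that $A,B,D\in[0,\infty)$ are all finite, so that the differences $A-B$ and $|A-B|$ are genuine real numbers and no $\infty-\infty$ ambiguity can arise; this is the only point where boundedness is actually used.

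The core of the argument is a single pointwise estimate. For each fixed index $n$ I would write
\begin{align*}
	a_n = b_n + (a_n-b_n) \leq b_n + |a_n-b_n| \leq B + D,
\end{align*}
where the last step uses $b_n\leq B$ and $|a_n-b_n|\leq D$ by definition of the suprema. Since this bound holds for every $n$ and its right-hand side $B+D$ does not depend on $n$, taking the supremum over $n$ on the left yields $A\leq B+D$, i.e.\ $A-B\leq D$.

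Finally I would invoke symmetry: interchanging the roles of $(a_n)$ and $(b_n)$ leaves $D$ unchanged (since $|a_n-b_n|=|b_n-a_n|$) and gives $B\leq A+D$, that is $B-A\leq D$. Combining the two one-sided bounds gives $|A-B|\leq D$, which is exactly the claimed inequality. I do not anticipate any real obstacle here; the statement is elementary and the entire content is the observation that $x\mapsto\sup_n$ is $1$-Lipschitz. The only genuinely necessary care is the appeal to boundedness to keep all suprema finite, and the brief symmetry remark to upgrade the one-sided estimate $A-B\leq D$ to the two-sided bound on $|A-B|$.
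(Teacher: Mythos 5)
Your proof is correct and complete: the pointwise estimate $a_n \leq b_n + |a_n-b_n| \leq B+D$ followed by taking suprema and invoking symmetry establishes the claim, and you correctly identify boundedness as what keeps the suprema finite. This is essentially the same approach as the paper, which disposes of the lemma in one line by noting that for non-negative sequences $\bigvee_{n=1}^\infty a_n = \lVert a \rVert_\infty$, so the claim is the reverse triangle inequality $\lvert \lVert a \rVert_{\infty}-\lVert b \rVert_{\infty} \rvert \leq \lVert a-b\rVert_{\infty}$ in $\ell^\infty$ — your argument is precisely the standard proof of that cited fact, written out in full.
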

\begin{proof}
	The statement is the triangle inequality $\lvert \lVert a \rVert_{\infty}-\lVert b \rVert_{\infty} \rvert \leq \lVert a-b\rVert_{\infty}$ with $\|\cdot\|_{\infty}$ the $\ell^{\infty}$ norm in the space of bounded sequences.
\end{proof}

\begin{Lemma}[Tightness]\label{lemma:tightness}
	Let the random field $Y$ be specified as in Theorem~\ref{thm:domainofattraction}, then the sequence of random fields $Y^{(n)}$ is tight. 
\end{Lemma}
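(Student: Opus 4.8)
The plan is to prove tightness of $(Y^{(n)})_{n}$ in $C(\R^d)$ by reducing it to tightness of the restrictions to an arbitrary compact cube $K=[-T,T]^d$; since the topology on $C(\R^d)$ is that of locally uniform convergence, tightness in $C(\R^d)$ is equivalent to tightness of every such restriction. On $C(K)$ I would verify the two standard conditions: tightness of the marginal at a fixed point, and a uniform-in-$n$ control of the modulus of continuity $\omega(f,\delta)=\sup_{s,t\in K,\,\|s-t\|\leq\delta}|f(s)-f(t)|$. The first is immediate, since Lemma~\ref{lemma:fidi} already gives convergence in distribution of $Y^{(n)}(t_0)$ for any fixed $t_0$, so these marginals are tight. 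The heart of the argument is therefore the equicontinuity estimate
\[
\lim_{\delta\to 0}\ \limsup_{n\to\infty}\ \P\big(\omega(Y^{(n)},\delta)>\varepsilon\big)=0,\qquad\forall\,\varepsilon>0.
\]

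The first key step is to linearize the maximum using Lemma~\ref{lemma:ineq}. Because $\sup_{t\in K}Y^{(n)}(t)<\infty$ almost surely (Lemma~\ref{lemma:sample-cts}), the relevant sequences are bounded, and the lemma yields, pointwise and then after taking the supremum over $\|s-t\|\leq\delta$,
\[
\omega(Y^{(n)},\delta)\ \leq\ \bigvee_{i}u_i\,\omega_K\big(X_i(\cdot-s_i),\delta\big),
\]
where $\omega_K(X(\cdot-s),\delta)$ is the modulus of $X$ over the translated compact set $K-s$. Consequently the event $\{\omega(Y^{(n)},\delta)>\varepsilon\}$ is contained in the event that the underlying Cox process $N_n$ has at least one atom $(s_i,u_i,X_i)$ belonging to $A_{\delta,\varepsilon}=\{(s,u,X):u\,\omega_K(X(\cdot-s),\delta)>\varepsilon\}$.

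The second step estimates this probability uniformly in $n$. Conditional on $\Psi$, $N_n$ is Poisson with intensity $\Lambda_n$, so the conditional probability of at least one atom in $A_{\delta,\varepsilon}$ is at most $\Lambda_n(A_{\delta,\varepsilon})$. Integrating the factor $u^{-2}\,\d u$ over $u>\varepsilon/\omega_K(X(\cdot-s),\delta)$ collapses to $\omega_K(X(\cdot-s),\delta)/\varepsilon$, giving
\[
\Lambda_n(A_{\delta,\varepsilon})=\frac{1}{\mu_Y\varepsilon}\,\E_X\!\int_{\R^d}\omega_K\big(X(\cdot-s),\delta\big)\,\Big(n^{-1}\sum_{j=1}^n\Psi_j(s)\Big)\,\d s.
\]
Taking expectation over $\Psi$ and invoking stationarity, so that $\E_\Psi\Psi_j(s)=c_\Psi$ is constant in both $j$ and $s$, eliminates the dependence on $n$ and $s$ and yields the bound
\[
\P\big(\omega(Y^{(n)},\delta)>\varepsilon\big)\ \leq\ \frac{c_\Psi}{\mu_Y\varepsilon}\,\E_X\!\int_{\R^d}\omega_K\big(X(\cdot-s),\delta\big)\,\d s,
\]
valid for every $n$. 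A dominated-convergence argument then closes the proof: by sample-continuity of $X$, $\omega_K(X(\cdot-s),\delta)\to 0$ as $\delta\to 0$ for each fixed $s$ and each path, while $\omega_K(X(\cdot-s),\delta)\leq 2\sup_{t\in K}X(t-s)$, which is integrable in $s$ after applying $\E_X$ by hypothesis~\eqref{eq:int-cond-X}. Hence the right-hand side tends to $0$ as $\delta\to 0$, establishing the equicontinuity condition and thus tightness on each $K$.

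The principal obstacle is obtaining a modulus bound that is uniform in $n$ despite the random, $n$-dependent directing measure $\Lambda_n$, whose spatial part is the empirical average $n^{-1}\sum_j\Psi_j$. This is resolved precisely by the interplay of Lemma~\ref{lemma:ineq}, which linearizes the max so that the Poisson/Cox structure can be used, and the expectation over $\Psi$, where stationarity renders $\E_\Psi\Psi_j(s)=c_\Psi$ constant. A secondary technical point is the justification of dominated convergence for the modulus integral, for which \eqref{eq:int-cond-X} supplies exactly the integrable envelope; notably, bounded support of $X$ (condition~\eqref{cond:Xbounded}) is \emph{not} required here.
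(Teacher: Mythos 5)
Your proof is correct, and although it follows the paper's overall skeleton (restriction to compact sets, reduction to the modulus of continuity, linearization of the maximum via Lemma~\ref{lemma:ineq}, and a concluding dominated-convergence argument with the integrable envelope supplied by \eqref{eq:int-cond-X}), it diverges from the paper at the decisive probabilistic step. The paper evaluates $\P\left(\omega_K\left(Y^{(n)},\delta\right)\leq \varepsilon\right)$ \emph{exactly} as an expected void probability $\E_\Psi\left[\exp\left(-\Lambda_n(A_{\delta,\varepsilon})\right)\right]$, and must then pass to the limit $n\to\infty$ inside this nonlinear functional, which costs an application of Fatou's lemma together with the strong law of large numbers for $\Psi^{(n)}=n^{-1}\sum_{i=1}^n\Psi_i$. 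You instead relax the void probability through $1-e^{-x}\leq x$, i.e.\ the first-moment bound $\P\left(N_n(A_{\delta,\varepsilon})\geq 1\,\middle|\,\Psi\right)\leq \Lambda_n(A_{\delta,\varepsilon})$; after that the expectation over $\Psi$ acts linearly, stationarity gives $\E_\Psi\Psi^{(n)}(s)=c_\Psi$ exactly for every $n$, and no limit theorem is needed at all --- your bound is uniform in $n$ rather than merely asymptotic. A secondary advantage is that your dominated convergence is applied directly to the two-sided modulus $\omega_K\left(X(\cdot-s),\delta\right)$ with envelope $2\sup_{t\in K}X(t-s)$, which sidesteps the paper's additional inequality replacing the integral of the two-sided modulus by that of the one-sided quantity $\sup_{t\in B_{\delta}(o)}\left|X(s-t)-X(s)\right|$, a step that requires its own justification. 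What the paper's exact computation buys in exchange is sharper information, namely an explicit exponential lower bound on $\liminf_n \P\left(\omega_K\left(Y^{(n)},\delta\right)\leq\varepsilon\right)$, but for verifying the equicontinuity condition \eqref{eq:tight} your Markov-type bound suffices and is more economical. Finally, your treatment of the one-point marginals via Lemma~\ref{lemma:fidi} is legitimate and non-circular, since the finite-dimensional convergence is established independently of tightness.
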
 

\begin{proof}
	Since the finiteness of $Y$ does also ensure the finiteness of each $Y^{(n)}$, it suffices to show that, for a compact set $K \subset \R^d$, the modulus of continuity 
	\begin{align*}
		\omega_K\left(Y^{(n)},\delta\right):=\underset{t_1,t_2\in K \,:\, \|t_1-t_2\|\leq\delta}{\sup}\left|Y^{(n)}(t_1)-Y^{(n)}(t_2)\right|
	\end{align*}
	satisfies the convergence
	\begin{align}\label{eq:tight}
		\underset{\delta\rightarrow 0}{\lim}\ \underset{n\rightarrow \infty}{\limsup}\ \P\left(\omega_K\left(Y^{(n)},\delta\right)> \varepsilon\right)=0.
	\end{align}
	To simplify the notation, we introduce 
	$K_{\delta}:=\left\{(t_1,t_2) \in \R^d \times \R^d \,:\, \|t_1-t_2\|\leq\delta, t_1,t_2\in K\right\}$.
	By the definition of $Y^{(n)}$ and the preceding Lemma~\ref{lemma:ineq}, we have 
	\begin{align*}
		\P\left( \omega_K\left(Y^{(n)},\delta\right) \leq \varepsilon\right)
		&=\  \P\left(\underset{(t_1,t_2)\in K_{\delta}}{\sup}\left|\bigvee_{i=1}^{\infty} u_iX_i(t_1-s_i)-\bigvee_{i=1}^\infty u_iX_i(t_2-s_i)\right| \leq \varepsilon\right)\\
		&\geq\  \P\left(\underset{(t_1,t_2)\in K_{\delta}}{\sup}\bigvee_{i=1}^{\infty}u_i\left|X_i(t_1-s_i)-X_i(t_2-s_i)\right| \leq \varepsilon\right).
	\end{align*}
	As the tuples $(s_i,u_i,X_i)$, $i\in\N,$ are the points of the Cox process $N_n$, we can compute the latter probability as expected void-probability. 
	To this end, let us denote the joint probability law of the i.i.d.\ intensity processes $\Psi_i$, $i=1,2,\dots$ and its expectation by $\P_\Psi$ and $\E_\Psi$, respectively. Setting $\Psi^{(n)}(s):=n^{-1}\sum_{i=1}^n\Psi_i(s)$, we obtain
	\begin{align*}
		&\liminf_{n \to \infty} \P\left( \omega_K\left(Y^{(n)},\delta\right) \leq \varepsilon\right)\\
		\geq\ & \liminf_{n\to \infty}\E_{\Psi}\left[\exp\left(-\varepsilon^{-1}\mu_Y^{-1}\E_X\int_{\R^d}\ \underset{(t_1,t_2)\in K_{\delta}}{\sup}\left|X(t_1-s)-X(t_2-s)\right|\Psi^{(n)}(s)\d s \right)\right]\\
		\geq\ & \E_{\Psi}\left[ \liminf_{n\to \infty} \exp\left(-\varepsilon^{-1}\mu_Y^{-1}\E_X\int_{\R^d}\ \underset{(t_1,t_2)\in K_{\delta}}{\sup}\left|X(t_1-s)-X(t_2-s)\right|\Psi^{(n)}(s)\d s \right)\right],
	\end{align*}	
	where the last inequality follows from Fatou's Lemma.
	Moreover, the strong law of large numbers and condition \eqref{eq:int-cond-a} (which ensures the existence and finiteness of the following right-hand side) yield that $\P_{\Psi}$-almost surely
	\begin{align*}
		&\lim_{n \to \infty} \E_X\int_{\R^d} \underset{(t_1,t_2)\in K_{\delta}}{\sup}  \left|X(t_1-s)-X(t_2-s)\right|\Psi^{(n)}(s)\d s\\
		&= \ c_{\Psi}\ \E_X\int_{\R^d} \underset{(t_1,t_2)\in K_{\delta}}{\sup} \left|X(t_1-s)-X(t_2-s)\right|\d s\\
		&\leq \ c_{\Psi}\ \E_X\int_{\R^d}  \underset{t \in B_{\delta}(o)}{\sup}   \left|X(s-t)-X(s)\right|\d s,
	\end{align*}
	which entails
	\begin{align*}
		\liminf_{n \to \infty} \P\left( \omega_K\left(Y^{(n)},\delta\right) \leq \varepsilon\right)
		\geq\  
		\exp\left(-\varepsilon^{-1}\mu_Y^{-1} c_\Psi\ \E_X\int_{\R^d}\ \underset{t \in B_{\delta}(o)}{\sup}\left|X(s-t)-X(s)\right|\d s \right).
	\end{align*}
	Finally, in order to establish \eqref{eq:tight}, it remains to be shown that 
	\begin{align*}
		\lim_{\delta \to 0}\ \E_X\int_{\R^d}\ \underset{t \in B_{\delta}(o)}{\sup}\left|X(s-t)-X(s)\right|\d s = 0.
	\end{align*}
	This, however, follows from the dominated convergence theorem, since for any fixed \\ $X\in C(\R^d)$ and any fixed $s \in \R^d$ the convergence of the integrand to $0$ holds true and by 
	\begin{align*}
		\underset{t \in B_{\delta}(o)}{\sup}\left|X(s-t)-X(s)\right|
		\leq \underset{t \in B_{\delta}(o)}{\sup} X(s-t) + X(s)
	\end{align*}
	and condition \eqref{eq:int-cond-X}, there exists an integrable upper bound.
\end{proof}

We are now in position to prove the main result of Section~\ref{sec:model}.

\begin{proof}[Theorem~\ref{thm:domainofattraction}]
	The finiteness and sample-continuity of the random fields $Y$ and $Z$ are an immediate consequence of Lemma~\ref{lemma:sample-cts}. While Lemma~\ref{lemma:fidi} shows that the finite-dimensional distributions of the random fields $Y^{(n)}$ converge to those of the process $Z$, Lemma~\ref{lemma:tightness} establishes the tightness of the sequence $Y^{(n)}$. Collectively, this proves the assertions.
\end{proof}

\subsubsection{Proofs for Section~\ref{sec:simu}}

\begin{proof}[Proposition \ref{Prop:Simu}]
	First note that $\sum_{i=1}^{\infty}\delta_{\Gamma_i}$ is a Poisson process on $\R_+$ with intensity 1. Hence $\sum_{i=1}^{\infty}\delta_{\Gamma^{-1}_i}$ is a Poisson process on $(0,\infty]$ with intensity $u^{-2}du$.
	Attaching the independent markings $X_i$ and, for fixed $\Psi=\psi$, the markings $S_i \sim \psi(s)/\nu(D_R)$ yields that, for fixed $\Psi=\psi$, the point process $\sum_{i=1}^\infty \delta_{(S_i,\nu_\psi(D_R)\mu_Y^{-1}\Gamma_i^{-1},X_i)}$  is a Poisson process directed by the measure $\mu_Y^{-1}\psi(s)~\d s\,u^{-2}~\d u~\d\P_X$ on $D_R\times[0,\infty)\times \mathbb{X}$. 
	
	Since in the construction of $Y$ only storms with center in $D_R$ can contribute to the process $Y$ on $D$, the law of $Y$ on $D$ and the law of 
	\begin{align*}
		\frac{\nu_\psi(D_R)}{\mu_Y} \bigvee_{i=1}^{\infty} \Gamma_i^{-1}X_i(t-S_i),\quad t\in D
	\end{align*}
	coincide. By definition of the stopping time $T$ and since $X$ is uniformly bounded by $C$, the latter has the same law as the process $\widetilde{Y}$ on $D$.
	So, it remains to be shown that $T$ is almost surely finite.
	
	Similar to the proof of Lemma~\ref{lemma:sample-cts2}, it can be shown that 
	\begin{align}
		\exists\, n \in \N \,:\, \inf_{t \in D} \bigvee_{i=1}^n \Gamma^{-1}_i X_i(t-S_i) > 0 \quad \text{almost surely}.
	\end{align}	
	Together with the decrease of the sequence $\Gamma_{n+1}^{-1}$
	this implies the a.s.-finiteness of $T$.
\end{proof}

\subsubsection{Proofs for Section~\ref{sec:estpsinonpara}}

%

\begin{proof}[Lemma~\ref{lemma:contpointsint}]
	
	Since $Y^{\ast}\vert_{\Psi=\psi}$ is independent of $N|_{\Psi=\psi}$, the process $N_K^{Y^{\ast}}\vert_{\Psi=\psi,Y^{\ast}=y}$ is an independent thinning of the Poisson process $N\vert_{\Psi=\psi}$. The number of points in the set
	\begin{align*}
		\left\{s\in K_R: (s,u,X)\in N\vert_{\Psi=\psi},\ u^{-1}\leq\sup_{t\in K} \frac{X(t-s)}{y(t)}\right\}
	\end{align*}
	is Poisson distributed with parameter
	\begin{align*}
		{\mu_Y}^{-1}\int_{K_R}\E_X\sup_{t\in K}\ \frac{X(t-s)}{y(t)}\psi(s)~\d s.
	\end{align*}
	
	This finishes the proof.	
\end{proof}

\begin{proof}[Theorem~\ref{lemma:uniformconvergence}]
	The paths $\psi$ of the intensity process $\Psi$ are almost surely continuous and $\psi_K^y$ is even uniformly continuous since its support equals the compact set $K_R$. Therefore, the integral	$\int\|s\|\psi^y_K(s)\d s<\infty$ is finite. Hence all assumptions of Theorem 3.1.7. in \cite{rao83unifconv} hold true, which proofs the statement.
\end{proof}
\begin{proof}[Corollary \ref{cor:uniformconvergence}]
	Condition \eqref{cond:Xboundary} ensures the existence of an upper bound $C\in\R$ such that
	\begin{align*}
		0<b^{y}_K(s)\leq C<\infty,\quad \forall s\in K_{R-\varepsilon},
	\end{align*}
	almost surely. Combining this with Theorem~\ref{lemma:uniformconvergence} we obtain
	\begin{align*}
		\underset{s\in K_{R-\varepsilon}}{\sup\ }|\widehat{{\psi}}_{K,n}(s)-{\psi}(s)|\leq C \cdot \underset{s\in K_R}{\sup\ }|\widehat{{\psi}}^{y}_{K,n}(s)-{\psi}^{y}(s)| \rightarrow 0,
	\end{align*}
	almost surely.
\end{proof}

\begin{proof}[Lemma~\ref{lemma:unbiased}]
	The assertion follows from the straight forward computation
	\begin{align*}
		&\E\int_{D} \widehat{{\psi}^y_{h}}(s)~\d s=\E\int_{D}h^{-d}\sum_{t\in N_K^y\cap D} c_{D}(t)^{-1} k\left(\frac{s-t}{h}\right)~\d s=\E\sum_{t\in N_K^y\cap D} 1=\E N_K^y(D).
	\end{align*}
	Since the number of points in $N_K^y(D) |_{\Psi=\psi}$ is Poisson distributed with parameter
	\begin{align*}
		\mu_Y^{-1} \, 	\int_{D}\E_X\sup_{t\in K}\ X(t-s)y(t)^{-1}\psi(s)~\d s,
	\end{align*}
	we conclude
	\begin{align*}
		\E\int_{D} \widehat{{\psi}^y_{h}}(s)~\d s=\mu_Y^{-1}\E\left[\int_{D}\E_X\ \left(\underset{t\in K}{\sup\ }{X(t-s)}{y(t)}^{-1}\right)\psi(s)\d s\right]=\E\int_{D}{\psi^y_K}(s)~\d s.
	\end{align*}	
\end{proof}

\begin{proof}[Lemma~\ref{lemma: postprocessing}]
	Simple calculation shows that $p\cdot CP(f\Psi)=p\cdot CP(f\Psi)\vert_{\{f\geq 1\}}+CP(f\Psi)\vert_{\{f< 1\}}$ and\\ $(1-f)_+\Psi=(1-f)\Psi\vert_{\{f< 1\}}$ which implies			
	\begin{align*}
		\notag &p\cdot CP(f\Psi)+CP((1-f)_+\Psi)\\
		&=p\cdot CP(f\Psi)\vert_{\{f\geq 1\}}+ CP(f\Psi)\vert_{\{f< 1\}}+CP((1-f)\Psi)\vert_{\{f< 1\}}.
	\end{align*}
	Since $p=f^{-1}\cdot\1_{\{f\geq 1\}}+\1_{\{f<1\}}$ we obtain $$p\cdot CP(f\Psi)\vert_{\{f\geq 1\}}=f^{-1}\cdot CP(f\Psi)\vert_{\{f\geq 1\}}=CP(\Psi)\1_{\{f\geq 1\}}$$ for the first part of the sum on the set $\{f\geq 1\}$.
	Furthermore, the remaining parts satisfy $CP(f\Psi)\vert_{\{f< 1\}}+CP((1-f)\Psi)\vert_{\{f< 1\}}=CP(\Psi)\vert_{\{f< 1\}}$ on the set $\{f<1\}$ which entails the assertion~\eqref{eq:CPcomp}.
\end{proof}

\bibliography{Literatur}

\begin{thebibliography}{50}
\providecommand{\natexlab}[1]{#1}
\providecommand{\url}[1]{\texttt{#1}}
\expandafter\ifx\csname urlstyle\endcsname\relax
  \providecommand{\doi}[1]{doi: #1}\else
  \providecommand{\doi}{doi: \begingroup \urlstyle{rm}\Url}\fi

\bibitem[Adler(1981)]{adler1981geometry}
R.~J. Adler.
\newblock \emph{The geometry of Random fields}.
\newblock John Wiley \& SonsLtd, 1981.

\bibitem[Blanchet and Davison(2011)]{blanchetdavison2011}
J.~Blanchet and A.~C. Davison.
\newblock Spatial modeling of extreme snow depth.
\newblock \emph{Ann. Appl. Stat.}, 5\penalty0 (3):\penalty0 1699--1725, 2011.

\bibitem[Castruccio et~al.(2015)Castruccio, Huser, and Genton]{castruccio15}
Stefano Castruccio, Raphael Huser, and Marc~G. Genton.
\newblock High-order composite likelihood inference for max-stable
  distributions and processes.
\newblock \emph{J. Comput. Graph. Statist.}, 2015.

\bibitem[Cooley(2005)]{cooley2005}
D.~Cooley.
\newblock \emph{Statistical Analysis of Extremes Motivated by Weather and
  Climate Studies: Applied and Theoretical Advances}.
\newblock PhD thesis, University of Colorado, 2005.

\bibitem[Cox(1955)]{Cox55}
D.~R. Cox.
\newblock Some statistical models connected with series of events.
\newblock \emph{J. R. Stat. Soc. Ser. B Stat. Methodol.}, 17:\penalty0
  129--164, 1955.

\bibitem[Daley and Vere-Jones(2003)]{daley2003pp}
D.~J. Daley and D.~Vere-Jones.
\newblock \emph{An Introduction to the Theory of Point Processes: Volume I:
  Elementary Theory and Methods}.
\newblock Springer, 2003.

\bibitem[Daley and Vere-Jones(2008)]{daley2008pp}
D.~J. Daley and D.~Vere-Jones.
\newblock \emph{An Introduction to the Theory of Point Processes: Volume II:
  General Theory and Structure}.
\newblock Springer, 2008.

\bibitem[de~Haan(1984)]{dehaan1984}
L.~de~Haan.
\newblock A spectral representation for max-stable processes.
\newblock \emph{Ann. Probab.}, 12\penalty0 (4):\penalty0 1194--1204, 1984.

\bibitem[de~Haan and Ferreira(2006)]{deHaan2006}
L.~de~Haan and A.~Ferreira.
\newblock \emph{Extreme value theory}.
\newblock Springer Series in Operations Research and Financial Engineering.
  Springer, New York, 2006.
\newblock An introduction.

\bibitem[Dieker and Mikosch(2015)]{dieker2015simu}
A.~B. Dieker and T.~Mikosch.
\newblock Exact simulation of {B}rown-{R}esnick random fields at a finite
  number of locations.
\newblock \emph{Extremes}, 18\penalty0 (2):\penalty0 301--314, 2015.

\bibitem[Diggle(1985)]{diggle1985kernel}
P.~J. Diggle.
\newblock {A kernel method for smoothing point process data.}
\newblock \emph{{J. Roy. Statist. Soc. Ser. C}}, 34:\penalty0 138--147, 1985.

\bibitem[Diggle and Gratton(1984)]{diggle1984mcm}
P.~J. Diggle and R.~J. Gratton.
\newblock Monte carlo methods of inference for implicit statistical models.
\newblock \emph{J. R. Stat. Soc. Ser. B Stat. Methodol}, 46\penalty0
  (2):\penalty0 193--227, 1984.

\bibitem[Diggle et~al.(2013)Diggle, Moraga, Rowlingson, and
  Taylor]{diggle2013loggaussiancox}
P.~J. Diggle, P.~Moraga, B.~Rowlingson, and B.~M. Taylor.
\newblock Spatial and spatio-temporal log-{G}aussian {C}ox processes: extending
  the geostatistical paradigm.
\newblock \emph{Statist. Sci.}, 28\penalty0 (4):\penalty0 542--563, 2013.

\bibitem[Dombry(2012)]{dombry2012shotnoise}
C.~Dombry.
\newblock Extremal shot noises, heavy tails and max-stable random fields.
\newblock \emph{Extremes}, 15\penalty0 (2):\penalty0 129--158, 2012.

\bibitem[Dombry and Eyi-Minko(2013)]{dombry2013contpoints}
C.~Dombry and F.~Eyi-Minko.
\newblock Regular conditional distributions of continuous max-infinitely
  divisible random fields.
\newblock \emph{Electron. J. Probab}, 18:\penalty0 no. 7, 21, 2013.

\bibitem[Dombry et~al.(2016)Dombry, Engelke, and Oesting]{dombry2016}
C.~Dombry, S.~Engelke, and M.~Oesting.
\newblock Exact simulation of max-stable processes.
\newblock \emph{Biometrika}, 2016.

\bibitem[Engelke et~al.(2011)Engelke, Kabluchko, and Schlather]{engelke11}
S.~Engelke, Z.~Kabluchko, and M.~Schlather.
\newblock An equivalent representation of the {B}rown-{R}esnick process.
\newblock \emph{Statist. Probab. Lett.}, 81\penalty0 (8):\penalty0 1150--1154,
  2011.

\bibitem[Engelke et~al.(2015)Engelke, Malinowski, Kabluchko, and
  Schlather]{engelke2015huessler}
S.~Engelke, A.~Malinowski, Z.~Kabluchko, and M.~Schlather.
\newblock Estimation of {H}\"usler-{R}eiss distributions and {B}rown-{R}esnick
  processes.
\newblock \emph{J. R. Stat. Soc. Ser. B Stat. Methodol}, 77\penalty0
  (1):\penalty0 239--265, 2015.

\bibitem[Genton et~al.(2015)Genton, Padoan, and Sang]{genton2015maxstable}
M.~G. Genton, S.~A. Padoan, and H.~Sang.
\newblock Multivariate max-stable spatial processes.
\newblock \emph{Biometrika}, 102\penalty0 (1):\penalty0 215--230, 2015.

\bibitem[Gin{\'e} et~al.(1990)Gin{\'e}, Hahn, and
  Vatan]{ginehahnvatan1990maxstable}
E.~Gin{\'e}, M.~G. Hahn, and P.~Vatan.
\newblock Max-infinitely divisible and max-stable sample continuous processes.
\newblock \emph{Probab. Theory Related Fields}, 87\penalty0 (2):\penalty0
  139--165, 1990.

\bibitem[Guttorp and Gneiting(2006)]{gneiting2006matern}
P.~Guttorp and T.~Gneiting.
\newblock Studies in the history of probability and statistics. {XLIX}. {O}n
  the {M}at\'ern correlation family.
\newblock \emph{Biometrika}, 93\penalty0 (4):\penalty0 989--995, 2006.

\bibitem[Heinrich and Molchanov(1994)]{heinrich1994shotnoise}
L.~Heinrich and I.~S. Molchanov.
\newblock Some limit theorems for extremal and union shot-noise processes.
\newblock \emph{Math. Nachr.}, 168:\penalty0 139--159, 1994.

\bibitem[Jourlin et~al.(1988)Jourlin, Laget, Matheron, Meyer, Preteux, Schmitt,
  and Serra]{serra1988shotnoise2}
M.~Jourlin, B.~Laget, G.~Matheron, F.~Meyer, F.~Preteux, M.~Schmitt, and
  J.~Serra.
\newblock \emph{Image Analysis and Mathematical Morphology. Volume 2:
  Theoretical Advances.}
\newblock Academic Press, Inc., London, 1988.

\bibitem[Kabluchko et~al.(2009)Kabluchko, Schlather, and
  de~Haan]{kabluchkoschlather2009br}
Z.~Kabluchko, M.~Schlather, and L.~de~Haan.
\newblock Stationary max-stable fields associated to negative definite
  functions.
\newblock \emph{Ann. Probab.}, 37\penalty0 (5):\penalty0 2042--2065, 2009.

\bibitem[Liu et~al.(2016)Liu, Blanchet, Dieker, and Mikosch]{liu2016simu}
Z~Liu, J~Blanchet, A.B. Dieker, and T.~Mikosch.
\newblock Optimal exact simulation of max-stable and related random fields.
\newblock \emph{arXiv:1609.06001}, 2016.

\bibitem[Matheron(1987)]{matheron1987}
G.~Matheron.
\newblock Suffit-il, pour une covariance, d'etre de type positif?
\newblock \emph{Sciences de la terre, serie informatique geologique}, 26, 1987.

\bibitem[M{\o}ller and Schoenberg(2010)]{moeller2010thinningpp}
J.~M{\o}ller and F.~P. Schoenberg.
\newblock Thinning spatial point processes into poisson process.
\newblock \emph{Adv. in Appl. Probab.}, 42\penalty0 (2):\penalty0 347--358,
  2010.

\bibitem[M{\o}ller and Waagepetersen(2004)]{moeller2004ppbook}
J.~M{\o}ller and R.~P. Waagepetersen.
\newblock \emph{Statistical inference and simulation for spatial point
  processes}, volume 100 of \emph{Monographs on Statistics and Applied
  Probability}.
\newblock Chapman \& Hall/CRC, Boca Raton, FL, 2004.

\bibitem[M{\o}ller et~al.(1998)M{\o}ller, Syversveen, and
  Waagepetersen]{moeller1998lgcp}
J.~M{\o}ller, A.~R. Syversveen, and R.~P. Waagepetersen.
\newblock {Log Gaussian Cox Processes}.
\newblock \emph{Scand. J. Statist.}, 25:\penalty0 451--482, 1998.

\bibitem[Nadarajah et~al.(1998)Nadarajah, Anderson, and Tawn]{nadarajah1998mex}
S.~Nadarajah, C.W. Anderson, and J.~A. Tawn.
\newblock Ordered multivariate extremes.
\newblock \emph{J. R. Stat. Soc. Ser. B Stat. Methodol.}, 60:\penalty0
  473--496, 1998.

\bibitem[Nolan(2016)]{nolan16book}
J.~Nolan.
\newblock \emph{Stable Distributions : Models for Heavy-Tailed Data}.
\newblock Birkhauser, 2016.

\bibitem[Oesting and Schlather(2014)]{oesting2014randpartition}
M.~Oesting and M.~Schlather.
\newblock Conditional sampling for max-stable processes with a mixed moving
  maxima representation.
\newblock \emph{Extremes}, 17\penalty0 (1):\penalty0 157--192, 2014.

\bibitem[Oesting et~al.(2012)Oesting, Kabluchko, and Schlather]{oesting12simbr}
M.~Oesting, Z.~Kabluchko, and M.~Schlather.
\newblock Simulation of {B}rown-{R}esnick processes.
\newblock \emph{Extremes}, 15\penalty0 (1):\penalty0 89--107, 2012.

\bibitem[Oesting et~al.(2013)Oesting, Schlather, and Zhou]{oesting13normspect}
M.~Oesting, M.~Schlather, and C.~Zhou.
\newblock On the normalized spectral representation of max-stable processes on
  a compact set.
\newblock \emph{arXiv:1310.1813v1}, 2013.

\bibitem[Oesting et~al.(2015)Oesting, Schlather, and
  Friederichs]{oesting2015postprocessing}
M.~Oesting, M.~Schlather, and P.~Friederichs.
\newblock Statistical post-processing of forecasts for extremes using bivariate
  brown-resnick processes with an application to wind gusts.
\newblock \emph{arXiv:1312.4584v2}, 2015.

\bibitem[Opitz(2013)]{opitz2013}
T.~Opitz.
\newblock Extremal {$t$} processes: elliptical domain of attraction and a
  spectral representation.
\newblock \emph{J. Multivariate Anal.}, 122:\penalty0 409--413, 2013.

\bibitem[Prakasa~Rao(1983)]{rao83unifconv}
B.~L.~S. Prakasa~Rao.
\newblock \emph{Nonparametric functional estimation}.
\newblock Probability and Mathematical Statistics. Academic Press, Inc., New
  York, 1983.

\bibitem[Resnick(2008)]{resnick2008neuauflage}
S.~I. Resnick.
\newblock \emph{Extreme values, regular variation and point processes}.
\newblock Springer Series in Operations Research and Financial Engineering.
  Springer, New York, 2008.
\newblock Reprint of the 1987 original.

\bibitem[Ripley(1977)]{ripley77}
B.~D. Ripley.
\newblock Modelling spatial patterns.
\newblock \emph{J. R. Stat. Soc. Ser. B Stat. Methodol.}, 39\penalty0
  (2):\penalty0 172--212, 1977.
\newblock With discussion.

\bibitem[Samorodnitsky and Taqqu(1994)]{samorodnitsky94}
G.~Samorodnitsky and M.~S. Taqqu.
\newblock \emph{Stable non-{G}aussian random processes}.
\newblock Stochastic Modeling. Chapman \& Hall, New York, 1994.
\newblock Stochastic models with infinite variance.

\bibitem[Schlather(2002)]{schlather2002modelsmaxstable}
M.~Schlather.
\newblock Models for stationary max-stable random fields.
\newblock \emph{Extremes}, pages 33--44, 2002.

\bibitem[Schlather and Tawn(2003)]{schlathertawn2003depmeasure}
M.~Schlather and J.~A. Tawn.
\newblock A dependence measure for multivariate and spatial extreme values:
  Properties and inference.
\newblock \emph{Biometrika}, 90\penalty0 (1):\penalty0 139--156, 2003.

\bibitem[Serra(1984)]{serra1984shotnoise1}
J.~Serra.
\newblock \emph{Image analysis and mathematical morphology}.
\newblock Academic Press, Inc., London, 1984.
\newblock English version revised by Noel Cressie.

\bibitem[Smith(1990)]{smith1990maxstable}
R.~L. Smith.
\newblock Max-stable processes and spatial extremes.
\newblock Unpublished Manuscript, 1990.

\bibitem[Stephenson et~al.(2015)Stephenson, Shaby, Reich, and
  Sullivan]{stephenson2015fire}
A.~G. Stephenson, B.~A. Shaby, B.~J. Reich, and A.~L. Sullivan.
\newblock Estimating spatially varying severity thresholds of a forest fire
  danger rating system using max-stable extreme-event modeling.
\newblock \emph{J. Appl. Meteor.}, 54\penalty0 (2), 2015.

\bibitem[Stoev(2008)]{stoev08}
S.~A. Stoev.
\newblock On the ergodicity and mixing of max-stable processes.
\newblock \emph{Stochastic Process. Appl.}, 118\penalty0 (9):\penalty0
  1679--1705, 2008.

\bibitem[Stoev and Taqqu(2005)]{stoev05alphastable}
S.~A. Stoev and M.~S. Taqqu.
\newblock Extremal stochastic integrals: a parallel between max-stable
  processes and {$\alpha$}-stable processes.
\newblock \emph{Extremes}, 8\penalty0 (4):\penalty0 237--266 (2006), 2005.

\bibitem[Stoev and Taqqu(2006)]{stoevtaqqu2006fractbr}
S.~A. Stoev and M.~S. Taqqu.
\newblock How rich is the class of multifractional {B}rownian motions?
\newblock \emph{Stochastic Process. Appl.}, 116\penalty0 (2):\penalty0
  200--221, 2006.

\bibitem[Stoyan and Stoyan(1992)]{stoyanfraktale1992}
D.~Stoyan and H.~Stoyan.
\newblock \emph{Fraktale - Formen - Punktbilder}.
\newblock 1992.

\bibitem[Zhang and Smith(2004)]{zhang04}
Z.~Zhang and R.~L. Smith.
\newblock The behavior of multivariate maxima of moving maxima processes.
\newblock \emph{J. Appl. Probab.}, 41\penalty0 (4):\penalty0 1113--1123, 2004.

\end{thebibliography}
\bibliographystyle{plainnat}

\end{document}